\numberwithin{equation}{section}
\theoremstyle{plain}
\newtheorem{theorem}{Theorem}[]
\newtheorem{lemma}[theorem]{Lemma}
\newtheorem{corollary}[theorem]{Corollary}
\theoremstyle{definition}
\newtheorem{definition}[theorem]{Definition}
\newtheorem{remark}[theorem]{Remark}
\newtheorem{example}[theorem]{Example}
 \DeclareMathOperator{\im}{Im}
  \DeclareMathOperator{\id}{id}
\newcommand{\bb}[1]{\mathbb{#1}}
   \newcommand{\A}{\bb{A}}
   \newcommand{\B}{\bb{B}}
   \newcommand{\LL}{\bb{L}}  
   \newcommand{\M}{\bb{M}}    
   \newcommand{\C}{\bb{C}} 
 \newcommand{\f}{\mathbf{f}}     
  \newcommand{\g}{\mathbf{g}} 
  \newcommand{\olat}{\Om\textbf{-Lat}}
   \newcommand{\onlo}{\Om\textbf{-NLO}}
  \newcommand{\opol}{\Om\textbf{-Pol}}
\newcommand{\cS}{\mathcal{S}}  
\newcommand{\CC}{\mathscr{C}}
\newcommand{\sL}{\mathscr{L}}
\newcommand{\V}{\mathscr{V}}
\newcommand{\Z}{\mathscr{Z}}
\renewcommand{\wp}{\mathscr{P}}
   \newcommand{\fF}{\mathscr{F}}
      \newcommand{\fI}{\mathscr{I}}
\newcommand{\al}{\alpha}
\newcommand{\be}{\beta}
\newcommand{\ep}{\varepsilon}
\newcommand{\lam}{\lambda}
\newcommand{\Lam}{\Lambda}
\newcommand{\om}{\omega}
\newcommand{\Om}{\Omega}
\newcommand{\ph}{\varphi}
\newcommand{\sg}{\sigma}     
\newcommand{\thet}{\theta} 
\newcommand{\Ups}{\Upsilon}     
\def\bo{\mathop{\scalerel*{\Box}{X}}\kern-1pt}
\def\di{\mathord{\scalerel*{\Diamond}{gX}}}
\newcommand{\ab}[1]{|#1|}
\newcommand{\du}{\partial}
\newcommand{\epi}{\twoheadrightarrow}
\renewcommand{\ge}{\succcurlyeq} 
\renewcommand{\geq}{\geqslant} 
\newcommand{\lap}{\between}      
\newcommand{\inv}{^{-1}}  
\newcommand{\join}{\bigvee}
\renewcommand{\le}{\preccurlyeq}  
\renewcommand{\leq}{\leqslant}  
\newcommand{\lo}{^{\triangledown}}               
\newcommand{\meet}{\bigwedge}
\newcommand{\mono}{\rightarrowtail}
\newcommand{\ov}[1]{\overline{#1}}
\newcommand{\sub}{\subseteq}  
\newcommand{\up}{^{\vartriangle}}
\newcommand{\vc}[2]{#1_0,\ldots{},#1_{#2-1}}
\begin{document}

\title{Morphisms and Duality for Polarities\\ and Lattices with Operators}
\author{Robert Goldblatt
\\
Victoria University of Wellington}
\date{}
\maketitle

\begin{abstract}   
Structures based on polarities have been used to provide relational semantics for propositional logics that are modelled algebraically by non-distributive lattices with additional operators. This article develops a first order notion of  morphism  between polarity-based structures that generalises the theory of bounded morphisms for Boolean modal logics.
It defines a category of such structures that is contravariantly dual to a given category of lattice-based algebras whose additional operations preserve either finite joins or finite meets.
Two different versions of the Goldblatt-Thomason theorem are derived in this setting.
\end{abstract}

\section{Introduction and overview}  

\hfill
\begin{minipage}{.5\textwidth} \small
Duality in mathematics is not a theorem, but a ``principle''.

\hfill Michael Atiyah
\end{minipage}

\bigskip

We  develop here   a new definition of `bounded morphism' between certain structures that  model propositional logics lacking the distributive law for conjunction and disjunction. Our theory adapts a well known semantic analysis of modal logic, which we now review.

 There are two main types of semantical interpretation of propositional modal logics.
In \emph{algebraic} semantics, formulas of the modal language are interpreted as elements of
a modal algebra $(\B,f)$, which comprises a Boolean algebra $\B$ with an additional operation $f$ that  interprets the
modality $\Diamond$ and preserves finite joins. In \emph{relational} semantics, formulas are interpreted as subsets of a Kripke frame $(X,R)$,  which comprises a binary relation $R$ on a set $X$.

The relationship between these two approaches is explained by a \emph{duality} that exists between algebras and frames. This is fundamentally category-theoretic in nature. The modal algebras are the objects of a category \textbf{MA} whose arrows are the standard algebraic homomorphisms. The Kripke frames are the objects of a category \textbf{KF} whose arrows are the  \emph{bounded morphisms},  $\al\colon (X,R)\to(X',R')$, i.e.\ functions $\al\colon X\to X'$ satisfying the `back and forth' conditions
\begin{align}
&\text{(Forth): \quad $xRy$ implies $\al(x)R'\al(y)$}.  \label{forth}
\\
&\text{(Back): \quad $\al(x)R'z$ implies $\exists y(xRy\ \&\ \al(y)=z)$}.  \label{back}
\end{align}
(Bounded morphisms are also known as p-morphisms. The adjective `bounded' derives from the $R$-bounded existential quantification in \eqref{back}.)

Each Kripke frame $\mathcal F=(X,R)$ has the dual modal algebra $\mathcal F^+=(\wp X,f_R)$ comprising the Boolean algebra of all  subsets of $X$, with the additional operation $f_R$ defined for all $A\sub X$ by
$$
f_R A=\{x\in X:\exists y(xRy\ \&\ y\in A)\}.
$$
Each modal algebra $\A=(\B,f)$ has the dual frame $\A_+=(X_\B,R_f)$, where $X_\B$ is the set of ultrafilters of $\B$, and
$xR_f y$ iff $\{f(a):a\in y\}\sub x$. There is an injective homomorphism of $\A$ into $(\A_+)^+$ that acts by $a\mapsto\{x\in X_\B:a\in x\}$, extending the Stone representation of Boolean algebras to modal algebras.
The mappings
$\mathcal F\mapsto\mathcal F^+$ and  $\A\mapsto\A_+$ form the basis of a pair of functors, from $\textbf{KF}$ to $\textbf{MA}$ and from
 $\textbf{MA}$ to $\textbf{KF}$ respectively, that are contravariant, i.e.\ direction-reversing. Each homomorphism $\thet\colon\A\to\A'$ induces a dual bounded morphism $\thet_+\colon\A'_+\to\A_+$, while each  bounded morphism
 $\al\colon \mathcal F\to \mathcal F'$ induces a dual homorphism   $\al^+\colon (\mathcal F')^+\to \mathcal F^+$. These induced maps act by forming preimages under $\thet$ and $\al$ respectively.

Composing the two functors produces objects of logical significance.
The double dual algebra $(\A_+)^+$ is known as the \emph{canonical extension} of $\A$, a construction first introduced by  J{\'o}nsson and Tarski \cite{jons:bool51} for Boolean algebras with any number of \emph{operators: }finitary operations that preserve  joins in each coordinate. They proved that any completely join preserving $n$-ary operator $f$ on $(\A_+)^+$ is determined by an $n+1$-ary relation $S_f$ on the structure $\A_+$, and showed that many equationally definable properties of  $f$ correspond to first-order definable properties of  $S_f$. This correpondence between algebras and relational structures provides tools for the semantic analysis of a range of  logics with modalities. One key to this is that if $\A$ is the Lindenbaum-Tarski algebra for a modal logic, then $\A_+$ is isomorphic to the \emph{canonical frame} for the logic whose points are maximally consistent sets of formulas \cite[Theorem 5.42]{blac:moda01}.

The study of canonical extensions has now evolved well beyond the Boolean situation. Gehrke and J\'onsson extended it to distributive lattice expansions \cite{gehr:boun94,gehr:mono00,gehr:boun04}. Then Gehrke and Harding \cite{gehr:boun01} gave an abstract algebraic definition of the canonical extension of any algebra $\A$ that is based on a bounded lattice. They proved the uniqueness of this extension up to isomorphism and constructed it as an algebra of `stable' subsets of a \emph{polarity},  a structure $(X,Y,R)$ comprising a binary relation $R$ between two sets $X$ and $Y$. Thus the polarity becomes the dual structure $\A_+$ of $\A$, and the canonical extension of $\A$ is the double dual $(\A_+)^+$,  which is the stable set lattice of $\A_+$.  Polarities are called  \emph{(formal) contexts} in Formal Concept Analysis \cite{gant:form99}.
The term `polarity' is itself of geometric origin, as we explain in Remark \ref{polgen} below.

Polarities with additional relational structure to represent additional algebraic operations have been used  by Gehrke and co-workers to  provide  relational semantics for several  logical systems, including the logic of residuated lattices \cite{dunn:cano05,gehr:gene06}, the  Lambek-Grishin calculus \cite{cher:gene12} and linear logic \cite{coum:rela14}, with canonical extensions playing a central role. There have also been applications to logics with unary modalities
 \cite{conr:cate16,conr:algo16}. 
 A fuller overview of the history of canonical extensions is given in the introduction to \cite{gold:cano18}.

Our objective here is to develop a new kind of morphism from a polarity $P=(X,Y,R)$ to a polarity $P'=(X',Y,'R')$  that can accomodate  expansion of the polarities by additional relational structure, and which  provides a dual morphism $\thet_+$ for any homomorphism $\thet$ of lattices with operators. 
Like \eqref{forth} and \eqref{back},
the definition of morphism we will use is \emph{first order}  relative to the structures involved, i.e.\ it  quantifies only over elements of the structures  and not over any higher order entities like subsets or sets of subsets.

The literature already contains several proposals for a notion of morphism between polarities.
Ern\'e \cite{erne:cate05,erne:tens94} investigated context morphisms as pairs of functions of the form $\al\colon X\to X'$ and  $\be\colon Y\to Y'$, and constructed functors between some categories of complete lattices with complete homomorphisms and categories of contexts with morphism-pairs having various properties. One of these, \emph{concept continuity}, is equivalent to our notion of morphism for polarities without additional structure, as we explain in Remark \ref{conccty}.
Hartung \cite{hart:topo92} studied mapping pairs $\al,\be$ between contexts with topological structure, and used them to obtain duals  for \emph{surjective} homomorphisms $\thet$. In \cite{hart:exte93} he obtained duals of arbitrary lattice homomorphisms by taking a morphism to be a pair of `multivalued functions', binary relations  forming subsets of $X\times X'$ and $Y\times Y'$. 
Hartonas and Dunn \cite{hart:ston97} defined morphisms as certain pairs of continuous functions between polarities with additional topological and partially ordered structure that characterises them as the duals of lattices (see  Remark \ref{conccty}).
There has also been work on polarity morphisms as pairs of subsets of  $X\times Y'$ and $Y\times X'$ that provide duals of completely join preserving homomorphisms \cite{dunn:cano05,gehr:gene06,conr:gold18}.
Jipsen \cite{jips:cate12} discusses a notion due to M.~A.~Moshier of a context morphism as a subset of $X\times Y'$, for which $R$ itself is the identity morphism on $P$.
Gehrke and van Gool \cite{gehr:dist14} studied polarity morphisms as pairs of functions satisfying back and forth properties similar to the modal  frame conditions \eqref{forth} and \eqref{back},  showing that they give duals for  lattice homomorphisms that preserve finite sets whose join distributes over meets, and ones whose  meet distributes over joins.

Here we define a bounded morphisms between polarity structures to be a pair $\al,\be$ of functions that have back and forth properties that look different to conditions \eqref{forth} and \eqref{back}, and in fact are similar to what would result from  those conditions if the relations $R$ and $R'$ were replaced by their \emph{complements}. For instance we use the reflection (back) condition
$$
\text{$\al  (x)R'\be  (y)$ implies $xRy$},
$$
in place of the preservation (forth) condition \eqref{forth}. The motivation for this approach comes from  earlier work of the author \cite{gold:sema74} in transforming polarity-style models of orthologic into  Kripke models of modal logic by replacing the polarity relation by its complement. Thus, at least for `ortho-polarities',  the bounded morphisms we use are  essentially equivalent to the modal bounded morphisms of their  transforms. This is explained in more detail in
Remark \ref{explainmorph}.

The new notion of bounded morphism allows us to carry out the kind of programme that was sketched above for the categories \textbf{MA} of modal algebras and \textbf{KF} of Kripke frames.  We  construct contravariant functors between a category $\olat$ of homomorphisms between lattices with additional operators (and dual operators) and a category $\opol$ of bounded morphisms between polarities with additional $n+1$-ary relations corresponding to  additional $n$-ary lattice operations. Bounded morphisms also gives rise to a notion of $P$ being an \emph{inner substructure} of $P'$, meaning that $P$ is a substructure of $P'$ for which the inclusions $X\hookrightarrow X'$ and $Y\hookrightarrow Y'$ form a bounded morphism. It is shown that  the image of a bounded morphism is an inner substructure of its codomain (Corollary \ref{imageinner}). Moreover,
the dual of a surjective homomorphism is a bounded morphism whose domain is isomorphic to its image (Theorem \ref{morphdual}).

On the other side of the duality to $(\A_+)^+$ is the double dual $(\mathcal F^+)_+$ of a frame $\mathcal F$, which we also call the \emph{canonical extension} of $\mathcal F$. It plays a central role in a definability result from \cite{gold:axio75}, generally known as the Goldblatt-Thomason theorem, which addresses the question of when a class of frames is definable by modal formulas. Here we consider the corresponding question for a class $\cS$ of polarity-based structures and show in Theorem \ref{GT} that if  $\cS$ is closed under canonical extensions, then it is equal to the class $\{P:P^+\in\V\}$ of all structures whose dual algebras belong to some equationally definable class of algebras $\V$ if, and only if, $\cS$ reflects canonical extensions and  is closed under images of bounded morphisms, inner substructures and direct sums.
The direct sum construction, introduced for polarities by  Wille \cite{will:rest82,will:subd87},  performs the same function here that disjoint unions perform for Kripke frames, namely it is dual to the formation of direct products of stable set lattices. We note that it is also a coproduct in the category $\opol$ that we define.

The original definability theorem from  \cite{gold:axio75} was concerned with modal definability of \emph{first-order} definable classes of frames, and its proof used the fact that any frame $\mathcal F$ has an elementary extension $\mathcal F^*$ that can be mapped surjectively onto $(\mathcal F^+)_+$ by a bounded morphism. This $\mathcal F^*$ can be taken to be an ultrapower of $\mathcal F$, so the theorem's hypothesis can be taken to be that $\cS$ is closed under ultrapowers. Along with closure under images of bounded morphisms this then yields the required closure under canonical extensions. Here we adapt the construction to polarities and find that there is a divergence from the modal case: the bounded morphism
$P^*\to ( P^+)_+$ may not have the surjectivity required for this argument.
But it does have a weaker property that allows a modified proof that  $\cS$ is closed under canonical extensions. 
We call this property \emph{maximal covering}
(briefly: the points of the $X$-part of $( P^+)_+$ are the filters of $P^+$ and the image of a maximal covering  morphism includes any filter that is maximally disjoint from some ideal).
Thus we obtain a  different definability characterisation (Theorem \ref{GT2}) for a class $\cS$ that is closed under ultrapowers, in which closure under codomains of maximal covering  morphisms replaces closure under images of bounded morphisms. An example is provided to show that this change is essential.

At the end of the article we briefly indicate how the theory can be extended to \emph{quasi}-operators, functions that in each coordinate either preserve joins or change meets into joins.

\section{Polarities and stable set lattices}    

We assume that all lattices dealt with have universal bounds, and view them as algebras of the form $(\LL,\land,\lor,0,1)$, with binary operations of meet $\land$ and join $\lor$, least element 0 and greatest element 1. 
The partial order of a lattice is denoted  $\leq$, and the symbols $\join$ and $\meet$ are used for the join and meet of a set of elements, when these exist. If they exist for all subsets, the lattice is \emph{complete}.

A \emph{polarity} is a structure $P=(X,Y,R)$ having $R\sub X\times Y$. For $A\sub X$ and $B\sub Y$, write $ARB$ if $xR y$ holds for all $x\in A$ and  $y\in B$. Abbreviate $AR\{y\}$ to $ARy$ and $\{x\}RB$ to $xRB$. Define
$$
\rho_R A=\{y\in Y:  ARy\},\quad 
\lam_R B=\{x\in X: xRB\}.
$$
The operations $\rho_R$ and $\lam_R$ are inclusion reversing: $A\sub A'$ implies $\rho_R A'\sub\rho_R A$, and likewise for $\lam_R$. They satisfy the `De Morgan laws'
\begin{equation}  \label{demorg}
\textstyle
\rho_R\bigcup\mathcal C=\bigcap\{\rho_R A:A\in\mathcal C\},\qquad
\lam_R\bigcup\mathcal C=\bigcap\{\lam_R B:B\in\mathcal C\},
\end{equation}
but not the corresponding laws with $\bigcup$ and $\bigcap$ interchanged.

The composite operations $\lam_R\rho_R$ on $\wp(X)$ and $\rho_R\lam_R$ on $\wp(Y)$ are closure operations whose fixed points are called \emph{stable} sets.
Thus a subset $A$ of $X$ is \emph{stable} if $A=\lam_R\rho_R A$, and a subset $B$ of $Y$ is \emph{stable} if $B=\rho_R\lam_RA$. In general  $\lam_R\rho_R A$ is the smallest stable superset of $A$ and $ \rho_R\lam_RB$ is the smallest stable superset of $B$, so to prove stability of $A$ it is enough to prove $\lam_R\rho_R A\sub A$, and similarly for $B$. The stable subsets of $X$ are precisely the sets of the form $\lam_R B$, and the stable subsets of $Y$ are precisely the sets of the form $\rho_R A$. This uses that under composition,  $\lam_R\rho_R\lam_R=\lam_R$ and $\rho_R\lam_R\rho_R=\rho_R$.

Let $P^+$ be the set of all stable subsets of $X$ in $P$, partially ordered by set inclusion. $P^+$ forms a complete lattice in which $\meet \mathcal C=\bigcap \mathcal C$,
$\join \mathcal C=\lam_R\rho_R\bigcup\mathcal C$, $1=X$ and $0=\lam_R\rho_R \emptyset=\lam_R Y$.
We call $P^+$ the \emph{stable set lattice} of $P$.
For any $A\in P^+$, we have

\begin{equation}\label{joinmeetdense}
A=\join\nolimits_{x\in A}\lam_R\rho_R\{x\}=\bigcap\nolimits_{y\in\rho_R A}\lam_R\{y\}.
\end{equation}

A quasi-order $\le_1$ on $X$, with inverse $\ge_1$, is defined by putting
\begin{equation} \label{leone}
x\le_1 x' \quad\text{iff}\quad \rho_R\{x\}\sub  \rho_R\{x'\}.
\end{equation}
Similarly, a quasi-order $\le_2$ on $Y$ is given by
\begin{equation}   \label{letwo}
y\le_2 y' \quad\text{iff}\quad \lam_R\{y\}\sub  \lam_R\{y'\}.
\end{equation}
Then the following condition holds:
\begin{equation}  \label{monR}
x'\ge_1 xR y\le_2 y' \text{ implies } x'Ry' .
\end{equation}
For $x\in X$ and $y\in Y$ put 
$$
[x)_1=\{x'\in X:x\le_1 x'\}, \qquad  [y)_2=\{y'\in Y:y\le_2 y'\}.
$$
A subset $A$ of $X$ is an \emph{upset} under $\le_1$ if it is closed upwards under $\le_1$,
i.e.\ $x\in A$ implies $[x)_1\sub A$. Likewise a set $B\sub Y$ is a \emph{$\le_2$-upset} if $y\in B$ implies $[y)_2\sub B$.

\begin{lemma}  \label{upsets}
Any stable subset of $X$ is a $\le_1$-upset, and any stable subset of $Y$ is a $\le_2$-upset.
Hence $\rho_RA$ is a $\le_2$-upset for any $A\sub X$, and $\lam_RB$ is a $\le_1$-upset for any $B\sub Y$.
\end{lemma}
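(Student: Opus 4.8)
The plan is to prove the first assertion straight from the definitions of stability and of $\le_1$, then to obtain the second by the evident left--right symmetry, and finally to read off the ``hence'' from the fact that sets of the form $\rho_R A$ and $\lam_R B$ are already stable.

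For the first assertion, let $A\sub X$ be stable, so that $A=\lam_R\rho_R A$, and suppose $x\in A$ and $x\le_1 x'$. From $\{x\}\sub A$ and the inclusion-reversal of $\rho_R$ we get $\rho_R A\sub\rho_R\{x\}$, and from \eqref{leone} we get $\rho_R\{x\}\sub\rho_R\{x'\}$; chaining these gives $\rho_R A\sub\rho_R\{x'\}$. This says $x'Ry$ for all $y\in\rho_R A$, i.e.\ $x'\in\lam_R\rho_R A=A$. Hence $[x)_1\sub A$, so $A$ is a $\le_1$-upset. The only thing to watch is that $\le_1$ is defined so that passing to a $\ge_1$-larger point \emph{enlarges} $\rho_R$ of the singleton, which is precisely what is needed to survive the inclusion-reversal of $\rho_R$.

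The second assertion is the mirror image: for a stable $B\sub Y$ one uses $B=\rho_R\lam_R B$, the inclusion-reversal of $\lam_R$, and \eqref{letwo}, to conclude that $B$ is a $\le_2$-upset. For the final sentence, note that for any $A\sub X$ the set $\rho_R A$ is stable (equivalently, $\rho_R\lam_R\rho_R=\rho_R$), so it is a $\le_2$-upset by the part just proved; dually $\lam_R B$ is stable for any $B\sub Y$ and hence a $\le_1$-upset. I do not expect any genuine obstacle here: the whole statement is a routine unwinding of the definitions of stability and of the quasi-orders $\le_1,\le_2$, the only subtlety being the bookkeeping of inclusion directions noted above.
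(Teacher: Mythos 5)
Your proof is correct and follows essentially the same route as the paper's: the paper invokes the monotonicity condition \eqref{monR} (with $y\le_2 y$ trivially) where you instead chain the inclusions $\rho_R A\sub\rho_R\{x\}\sub\rho_R\{x'\}$ directly from antitonicity and \eqref{leone}, which is the same computation unwound. The handling of the $Y$-side by symmetry and of the final sentence via stability of $\rho_R A$ and $\lam_R B$ matches the paper exactly.
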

\begin{proof}
Let $A\in P^+$. If $x\in A$ and $x\le_1 x'$, then any $y\in \rho_R A$ has 
$x'\ge_1 xR y\le_2 y$, hence $x' Ry$ by \eqref{monR}. So $x'\in\lam_R\rho_R A=A$. This shows $A$ is a $\le_1$-upset. 
The case of stable subsets of $Y$ is similar.
The second statement of the lemma follows as $\rho_RA$ and $\lam_RB$ are always stable.
\end{proof}

A map $\al:(X,\le)\to(X',\le')$ between  quasi-ordered sets is \emph{isotone} if it preserves the orderings, i.e.\ $x\le z$ implies $\al(x)\le' \al(z)$. For such a map, if $A$ is an $\le'$-upset  of $X'$, then $\al\inv A$ is an $\le$-upset  of $X$.

An  \emph{antitone} $\al$ is one that reverses the orderings, i.e.\ $x\le z$ implies $\al(z)\le' \al(x)$. For example, $\rho_R$ is antitone as a map $(\wp(X),\sub)\to(\wp(Y),{\sub})$. Likewise for $\lam_R\colon(\wp(Y),\sub)\to(\wp(X), \sub)$.

\begin{remark}[\bf Etymology of `polarity']        \label{polgen}
In projective plane geometry, a polarity is an interchange of points and lines, with the line associated to a given point being the \emph{polar} of the point, and  the point associated to a given line being the \emph{pole} of the line. A point $x$ lies on a given line iff the pole of that line lies on the polar of $x$.
The pole of the polar of a point is that point, and the polar of the pole of a line is that line.
Two points are called  \emph{conjugate} if each lies on the polar of the other. The polar of point $x$ can be identified with the set of points $\{y:xRy\}$ where $R$ is the congugacy relation.

A polarity on a projective three-space interchanges points and planes as poles and polars, while associating lines with each other in pairs. Two associated lines are polars of each other.
More generally, a polarity on a finite-dimensional projective space is an inclusion reversing permutation $\thet$ of the subspaces that is also an involution, i.e.\ $\thet(\thet A)=A$. Such a $\thet$ can be obtained from an inner product (symmetric bilinear function) $x\cdot y$ on the space by putting $\thet A=\rho_R A$, where $xRy$ iff $x\cdot y=0$.

The  use of `polarity' to refer to a binary relation derives from the work of Birkhoff \cite[Section 32]{birk:latt40} who first defined the operations $\rho_R$ and $\lam_R$ for an arbitrary $R\sub X\times Y$ and observed that they give a dual isomorphism between the lattices of stable subsets of $X$ and $Y$. He suggested that $\rho_R A$ could in general be called the \emph{polar} of $A$ with respect to $R$, in view of the above geometric example.
\qed
\end{remark}

\section{Operators and relations}

A finitary operation $f\colon \LL^n\to\LL$ on a lattice is an \emph{operator} if it preserves binary joins in each coordinate. 
As such it preserves the ordering of $\LL$ is each coordinate, which implies that it preserves the product ordering, i.e.\ it is isotone as an operation on $\LL^n$.

A \emph{normal operator} preserves the least element in each coordinate as well, hence preserves all  finite joins in each coordinate, including the empty join 0. A \emph{complete operator} preserves all existing non-empty joins in each coordinate, while a \emph{complete normal operator} preserves the empty join as well. By iterating the join preservation in each coordinate successively, one can show that if $f$ is a  complete normal operator, then
\begin{equation}\textstyle   \label{joincomplete}
f(\join A_0,\dots,\join A_{n-1})=\join\{f(\vc{a}{n}):a_i\in A_i \text{ for all }i<n\}.
\end{equation}
A \emph{dual operator} (\emph{normal} dual operator, \emph{complete} dual operator, \emph{complete normal} dual operator) is a finitary operation that preserves binary meets (finite meets, non-empty meets, all meets) in each coordinate. (Preservation of the empty meet means preservation of the greatest element $1$.) A dual operator is isotone on $\LL^n$.
A complete normal dual operator satisfies the equation that results from \eqref{joincomplete} by replacing each $\join$ by 
$\meet$.

Fix a polarity $P=(X,Y,R)$.
We are going to show that complete normal $n$-ary operators on the stable set lattice $P^+$ can be built from  $n+1$-ary relations on $P$. For this we need to introduce some vectorial notation for handling  tuples and relations (sets of tuples).               

A tuple $(\vc{x}{n})$ will be denoted $\vv{x}$. Then $\vv{x}[z/i]$ denotes the tuple obtained from $\vv{x}$ by replacing $x_i$ by $z$, while $(\vv{x},y)$ denotes the $n+1$-tuple $(\vc{x}{n},y)$. If $S\sub X^n\times Y$ is an $n+1$-ary relation, i.e.\  a set of $n+1$-tuples, we usually write  $\vv{x}Sy$  when $(\vv{x},y)\in S$. For $Z\sub X^n$ we write $ZSy$ if $\vv{x}Sy$ holds for all $\vv{x}\in Z$.

If $\vv{A}=(\vc{A}{n})$ is a tuple of sets $A_i$, we write $\pi\vv{A}$ for the product set
$A_0\times\cdots\times A_{n-1}$. We sometimes write $\vv{x}\in_\pi\vv{A}$ when $\vv{x}\in\pi\vv{A}$, i.e.\ when $x_i\in A_i$ for all $i<n$. 
Similarly  $\vv A\sub_\pi\vv B$ means that $A_i\sub B_i$ for all $i<n$.
Various operations are lifted to tuples coordinate-wise, so that $\rho_R\vv{A}=(\vc{\rho_R A}{n})$ while
$\lam_R\vv{A}=(\vc{\lam_R A}{n})$, $\thet\inv\vv{A}=(\vc{\thet\inv A}{n})$, etc.

A \emph{section} of a relation $S\sub X^n\times Y$ is any subset of $X$ or $Y$ obtained by fixing all but one of the coordinates and letting the unfixed coordinate vary arbitrarily. Thus  each $\vv{x}\in X^n$  determines the section
$$
S[\vv{x},-] = \{y\in Y:\vv{x}Sy\}.
$$
For $i<n$, sections that vary the $i$-th coordinate  are defined, for $\vv{x}\in X^n$ and $y\in Y$, by letting
$$
S[\vv{x}[-]_i,y]= \{x'\in X:\vv{x}[x'/i]Sy\}.
$$
We illustrate this definition with a technical lemma that will be applied below.
For each $x\in X$, let $\ab{x}=\lam_R\rho_R\{x\}$, the smallest member of $P^+$ to contain $x$. For an $n$-tuple $\vv{x}=(\vc{x}{n})$, let $\ab{\vv{x}}=(\ab{x_0},\dots,\ab{x_{n-1}})$. Note that since $x_i\in\ab{x_i}$ for all  $i<n$,  we have 
$\vv x\in_\pi\ab{\vv x}$.

\begin{lemma}  \label{lemvvxSy}
Suppose that all sections of $S$ of the form $S[\vv{x}[-]_i,y]$ are stable in $X$. Then for all $(\vv x,y)\in X^n\times Y$,  if $\vv xSy$  then every $\vv z\in_\pi\ab{\vv x}$ has $\vv zSy$, i.e.\ $(\pi\ab{\vv x})Sy$.
\end{lemma}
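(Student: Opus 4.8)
The statement is that stability of the $i$-th-coordinate sections $S[\vv{x}[-]_i,y]$ propagates from a single tuple $\vv{x}$ with $\vv{x}Sy$ to the whole product box $\pi\ab{\vv{x}}$. The natural strategy is to change one coordinate at a time, reducing to a one-coordinate claim: if $\vv{x}Sy$ and $z\in\ab{x_i}=\lam_R\rho_R\{x_i\}$, then $\vv{x}[z/i]Sy$. Granting this, an obvious induction on $i<n$ gives the full statement, since any $\vv{z}\in_\pi\ab{\vv{x}}$ is reached from $\vv{x}$ by successively replacing $x_0$ by $z_0$, then $x_1$ by $z_1$, and so on; at each stage the tuple currently in hand still stands in relation $S$ to $y$, so the one-coordinate claim applies again (note the sections $S[\vv{w}[-]_i,y]$ for the intermediate tuples $\vv{w}$ are among the hypothesised stable sections, so the hypothesis is preserved along the induction).

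For the one-coordinate claim, fix $i$ and $y$, and set $A=S[\vv{x}[-]_i,y]$, which by hypothesis is a stable subset of $X$. From $\vv{x}Sy$ we get $x_i\in A$. Because $A$ is stable, hence equal to $\lam_R\rho_R A$, and $\lam_R\rho_R$ is a closure operator, $\lam_R\rho_R\{x_i\}$ is the smallest stable set containing $x_i$ and therefore $\lam_R\rho_R\{x_i\}\sub\lam_R\rho_R A=A$. Thus $\ab{x_i}=\lam_R\rho_R\{x_i\}\sub A$, which is exactly the assertion that every $z\in\ab{x_i}$ has $\vv{x}[z/i]Sy$. So the one-coordinate claim is essentially immediate from the definition of $\ab{x_i}$ as the smallest stable set containing $x_i$ together with monotonicity of $\lam_R\rho_R$.

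I expect no serious obstacle here: the content is entirely the observation that ``the $i$-th section is stable'' combined with ``$\ab{x_i}$ is the least stable superset of $\{x_i\}$''. The only point requiring a little care is the bookkeeping in the induction — making sure that after replacing earlier coordinates the relevant section of the new tuple is still of the form $S[\vv{w}[-]_i,y]$ and hence still covered by the stability hypothesis (it is, since the hypothesis quantifies over all $\vv{w}\in X^n$), so that the one-coordinate step can be reapplied. Once that is noted the proof is a couple of lines.
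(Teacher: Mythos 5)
Your proposal is correct and follows essentially the same route as the paper's proof: an induction replacing one coordinate at a time, where at each step the stable section $S[\vv{w}[-]_i,y]$ of the current tuple contains $x_i$ and hence contains $\ab{x_i}$, the least stable superset of $\{x_i\}$. Your bookkeeping remark about intermediate tuples is exactly the point the paper handles with its induction hypothesis \eqref{indi}, so there is no gap.
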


\begin{proof}
Let $\vv xSy$ where $\vv x=(\vc{x}{n})$. We will show by  induction on $i\leq n$ that
\begin{equation}  \label{indi}
\text{ if $z_j\in\ab{x_j}$ for all $j< i$, then
$(z_0,\dots,z_{i-1},x_i,\dots,x_{n-1})Sy$. }
\end{equation}
Putting $i=n$ then gives the desired result that
$(\ab{x_0}\times\cdots\times\ab{x_{n-1}})Sy$.

If $i=0$, then \eqref{indi} holds by the assumption $\vv xSy$. Now suppose inductively that \eqref{indi} holds for some $i<n$, and that $z_j\in\ab{x_j}$ for all $j< i+1$. Then this  hypothesis gives $\vv wSy$, where
$\vv w=(z_0,\dots,z_{i-1},x_i,\dots,x_{n-1})$.
Now   $S[\vv{w}[-]_i,y]$ is a stable set containing $x_i$, because $\vv w[x_i/i]=\vv w$ and $\vv w Sy$. Since $\ab{x_i}$ is the smallest such stable set, we get $\ab{x_i}\sub S[\vv{w}[-]_i,y]$. But $z_i\in\ab{x_i}$, so this implies $\vv w[z_i/i]Sy$, i.e.\ 
 $
 (z_0,\dots,z_{i},x_{i+1},\dots,x_{n-1})Sy.
 $
Hence \eqref{indi} holds with $i+1$ in place of $i$,
That completes the inductive proof that  \eqref{indi} holds for all $i\leq n$, as required. 
\end{proof}

Now for  $S\sub X^n\times Y$,  define  $f^\bullet_S\colon(P^+)^n\to\wp Y$ by putting, for  $\vv{A}\in(P^+)^n$,
\begin{align}
f^\bullet_S\vv{A}
&=\{y\in Y:  (\pi\vv{A})Sy\}, \nonumber
\\
&=\bigcap\{S[\vv{x},-]:\vv{x}\in\pi\vv{A}\}.  \label{f0stable}
\end{align}
Then define an $n$-ary operation $f_S$ on 
$P^+$ by putting 
$$
f_S\vv{A}=\lam_R f^\bullet_S\vv{A}.
$$
This definition generalises the form of the binary fusion operation $\otimes$ defined in \cite{gehr:gene06} from a  relation $S\sub X^2\times Y$ by 
\begin{align*}
A_0\otimes A_1&=\bigcap\{\lam_R\{y\}: (\forall x_0\in A_0)(\forall x_1 \in A_1)\, S(x_0,x_1,y)\}\\
&=\lam_R\{y\in Y:(A_0\times A_1)Sy\}.
\end{align*}

Note that $f^\bullet_S$ is antitone in the $i$-th coordinate, i.e.\ if $A_i\sub B$ then $f^\bullet_S(\vv{A})\supseteq f^\bullet_S(\vv{A}[B/i])$. Hence $f_S$ is isotone in each coordinate. The condition for $x\in f_S\vv{A}$ is
\begin{equation}  \label{firstfsA}
\forall y\in Y[ \forall \vv{z}(\vv{z}\in_\pi\vv{A}\to \vv{z}S y)\to xRy],
\end{equation}
which can be spelt out as a first-order formula in the predicates $z_i\in A_i$, $ \vv{z}S y$ and $xRy$.

\begin{theorem}
Let $f$ be any $n$-ary complete normal operator on $P^+$ for a polarity $P$. Then $f$ is equal to the operation $f_{S_f}$ determined by some relation $S_f\sub X^n\times Y$.
\end{theorem}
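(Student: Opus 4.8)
The plan is to imitate the J\'onsson--Tarski recipe for extracting a dual relation from a complete operator, using the principal stable sets $\ab{x}=\lam_R\rho_R\{x\}$ --- which are join-dense in $P^+$ by \eqref{joinmeetdense} --- as the ``atoms''. Concretely, I would define
$$
S_f=\{(\vv{x},y)\in X^n\times Y:\ (f\ab{\vv{x}})Ry\},
$$
where $f\ab{\vv{x}}=f(\ab{x_0},\dots,\ab{x_{n-1}})\in P^+$; equivalently, $\vv{x}S_f y$ iff $y\in\rho_R(f\ab{\vv{x}})$, iff $f\ab{\vv{x}}\sub\lam_R\{y\}$. The first thing to record is the form of the $Y$-sections: $S_f[\vv{x},-]=\rho_R(f\ab{\vv{x}})$, which is automatically stable in $Y$. (Using that $f$ is a complete normal operator one can also verify that the $X$-sections $S_f[\vv{x}[-]_i,y]$ are stable, so that Lemma~\ref{lemvvxSy} applies to $S_f$; this is not needed for the present statement but is convenient later.)

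Next I would fix $\vv{A}\in(P^+)^n$ and simply unwind the definitions. Using the section computation above together with the De Morgan law \eqref{demorg}, the auxiliary map \eqref{f0stable} becomes
$$
f^\bullet_{S_f}\vv{A}=\bigcap\{S_f[\vv{x},-]:\vv{x}\in_\pi\vv{A}\}=\bigcap\{\rho_R(f\ab{\vv{x}}):\vv{x}\in_\pi\vv{A}\}=\rho_R\bigl(\,\bigcup\{f\ab{\vv{x}}:\vv{x}\in_\pi\vv{A}\}\bigr),
$$
so that $f_{S_f}\vv{A}=\lam_R f^\bullet_{S_f}\vv{A}=\lam_R\rho_R\bigcup\{f\ab{\vv{x}}:\vv{x}\in_\pi\vv{A}\}$. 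Since in $P^+$ one has $\join\mathcal C=\lam_R\rho_R\bigcup\mathcal C$, this says exactly that $f_{S_f}\vv{A}=\join\{f\ab{\vv{x}}:\vv{x}\in_\pi\vv{A}\}$, the join being taken in the stable set lattice.

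It then remains to identify that join with $f\vv{A}$. By \eqref{joinmeetdense} each coordinate satisfies $A_i=\join\nolimits_{x\in A_i}\ab{x}$ in $P^+$; since $f$ is a \emph{complete normal} operator, \eqref{joincomplete} lets me push $f$ through these joins coordinate by coordinate to get
$$
f\vv{A}=f\bigl(\join\nolimits_{x\in A_0}\ab{x},\dots,\join\nolimits_{x\in A_{n-1}}\ab{x}\bigr)=\join\{f\ab{\vv{x}}:\vv{x}\in_\pi\vv{A}\}.
$$
Combining this with the previous paragraph gives $f\vv{A}=f_{S_f}\vv{A}$ for every $\vv{A}$, which is the claim.

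The only step with any real content is this last identity $f\vv{A}=\join\{f\ab{\vv{x}}:\vv{x}\in_\pi\vv{A}\}$, and it is exactly here that both hypotheses on $f$ are used: completeness is what allows $f$ to be pulled through the (possibly infinite) join-dense expansion $A_i=\join\nolimits_{x\in A_i}\ab{x}$ in each coordinate, while normality is precisely what is needed in the degenerate case where some $A_i$ is empty --- which occurs exactly when $\lam_R Y=\emptyset$, so that $\emptyset$ is the bottom stable set and $f$ must send the empty join in that coordinate to $0$. Everything else in the argument is bookkeeping with the De Morgan laws \eqref{demorg} and the standard description of joins in $P^+$, so I expect no serious obstacle beyond getting that one identity right.
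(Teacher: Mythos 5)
Your proposal is correct and follows essentially the same route as the paper: you define the same relation $S_f$ (via $\vv x S_f y$ iff $y\in\rho_R f\ab{\vv x}$), compute $f^\bullet_{S_f}\vv A$ via \eqref{demorg}, and identify $f\vv A$ with $\join\{f\ab{\vv x}:\vv x\in_\pi\vv A\}$ using \eqref{joinmeetdense} and \eqref{joincomplete}, exactly as in the paper's proof (merely traversing the chain of equalities in the opposite direction). Your closing remark about where completeness and normality enter is accurate but is commentary rather than an extra step.
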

\begin{proof}
Recall that for  $\vv{x}=(\vc{x}{n})$ we put  $\ab{\vv{x}}=(\ab{x_0},\dots,\ab{x_{n-1}})$ where
 $\ab{x_i}=\lam_R\rho_R\{x_i\}\in P^+$.
Define $\vv{x}S_f y$ iff $y\in\rho_Rf\ab{\vv{x}}$. Then for $\vv{A}\in (P^+)^n$,
\begin{align}
f^\bullet_{S_f}\vv{A}
&=\{y\in Y:\vv{x}\in_\pi\vv{A} \text{ implies } y\in\rho_Rf\ab{\vv{x}}\} \label{deff0S}
\\
&=\bigcap\{\rho_Rf\ab{\vv{x}}: \vv{x}\in_\pi\vv{A}\}.  \label{meetrhoR}
\end{align}
But since $f$ preserves  joins in each coordinate, using the first equation from \eqref{joinmeetdense} we get
\begin{align*}
f\vv{A}
&=f(\join\nolimits_{x_0\in A_0}\ab{x_0},\dots, \join\nolimits_{x_{n-1}\in A_{n-1}}\ab{x_{n-1}} )
\\
&=\join\{f(\ab{\vv{x}}):\vv{x}\in_\pi\vv{A}\}  \qquad\text{by \eqref{joincomplete}},
\\
&=\lam_R\rho_R\big(\bigcup\{f(\ab{\vv{x}}):\vv{x}\in_\pi\vv{A}\} \big), \quad\text{by definition of }\join,
\\
&=\lam_R\big(\bigcap\{\rho_Rf(\ab{\vv{x}}):\vv{x}\in_\pi\vv{A}\} \big)  \quad \text{by }\eqref{demorg},
\\
&=\lam_R f^\bullet_{S_f}\vv{A}  \qquad\text{by \eqref{meetrhoR} },
\\
&=f_{S_f}\vv{A}.
\end{align*}
\end{proof}

\begin{theorem} \label{fScomplop}
If all sections of $S$ are stable, then $f_S$ is a complete normal operator, and S is equal to the relation $S_{f_S}$ determined by $f_S$.
\end{theorem}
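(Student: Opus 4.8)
The plan is to prove the two assertions of Theorem~\ref{fScomplop} in sequence: first that $f_S$ is a complete normal operator, then that $S = S_{f_S}$.

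For the first part, I would begin by checking that $f_S$ actually maps into $P^+$: this is immediate since $f_S\vv A = \lam_R f^\bullet_S\vv A$ and every set of the form $\lam_R B$ is stable. Next, the key step is to show $f^\bullet_S$ turns non-empty joins in each coordinate into intersections; that is, for a family $\{A_i^{(k)}\}_{k\in K}$ of stable sets and a fixed coordinate $i$,
$$
f^\bullet_S\big(\vv A[\join\nolimits_k A_i^{(k)} / i]\big) \;=\; \bigcap\nolimits_k f^\bullet_S\big(\vv A[A_i^{(k)}/i]\big).
$$
The inclusion $\sub$ is just antitonicity of $f^\bullet_S$ (noted in the excerpt). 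For $\supseteq$ I would unfold the definition via \eqref{f0stable}: if $y$ lies in every $f^\bullet_S(\vv A[A_i^{(k)}/i])$, then $\vv zSy$ for every $\vv z$ with $z_i\in\bigcup_k A_i^{(k)}$; I must extend this to all $z_i \in \join_k A_i^{(k)} = \lam_R\rho_R\bigcup_k A_i^{(k)}$. Here is exactly where the stability hypothesis on the sections $S[\vv w[-]_i, y]$ enters — and Lemma~\ref{lemvvxSy} is the right tool, or rather the single-coordinate argument inside its inductive step: the set $\{z : \vv z[z/i]Sy\}$ is a stable set (a section of the assumed form) containing $\bigcup_k A_i^{(k)}$, hence containing its stable closure $\join_k A_i^{(k)}$. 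Applying $\lam_R$ (which converts the intersection on the right to a join, using \eqref{demorg}) then yields that $f_S$ preserves non-empty joins in coordinate $i$. Normality — preservation of the empty join, i.e.\ $f_S(\vv A[0/i]) = 0$ — follows similarly: when $A_i = 0 = \lam_R Y$, I need $f^\bullet_S(\vv A[0/i]) = Y$, which amounts to showing $\vv z[z/i]Sy$ for all $z \in \lam_R Y$ and all $y$; since $\lam_R Y$ is the smallest stable set, and each section $S[\vv w[-]_i,y]$ is stable hence contains $\lam_R Y$ when... actually one must be slightly careful, but the point is that $\lam_R Y \sub S[\vv w[-]_i,y]$ holds because $S[\vv w[-]_i,y]$ is a \emph{non-empty}? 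No — I would instead argue directly that $y\in f^\bullet_S(\vv A[0/i])$ for every $y$: the condition $(\pi\vv A[0/i])Sy$ is required, and one shows the section containing $\lam_R Y$ issue reduces to $\lam_R Y \sub S[\ldots]$ via stability of that section plus the fact that stable sets are exactly the $\lam_R B$'s, all of which contain $\lam_R Y = 0$. So normality is built into stability of the sections.

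For the second part, $S = S_{f_S}$, recall $\vv x\,S_{f_S}\,y$ iff $y \in \rho_R f_{f_S}\ab{\vv x} = \rho_R \lam_R f^\bullet_S\ab{\vv x}$. Since $\rho_R\lam_R\rho_R = \rho_R$ and $f^\bullet_S\ab{\vv x} = \bigcap\{S[\vv z,-] : \vv z\in_\pi\ab{\vv x}\}$ is an intersection of sets of the form $\rho_R(\text{something})$... wait — more cleanly, $f^\bullet_S\ab{\vv x}$ is stable in $Y$ because $f_S\ab{\vv x} = \lam_R f^\bullet_S\ab{\vv x}$ is stable in $X$ and $f^\bullet_S\ab{\vv x} = \rho_R f_S\ab{\vv x}$ (one should verify $\rho_R\lam_R f^\bullet_S\ab{\vv x} = f^\bullet_S\ab{\vv x}$, which holds since $f^\bullet_S\ab{\vv x}$, being of the form $\bigcap S[\vv z,-]$ with each $S[\vv z,-]$ a $\le_2$-upset... no, I want it of the form $\rho_R A$). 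Granting that, $\rho_R\lam_R f^\bullet_S\ab{\vv x} = f^\bullet_S\ab{\vv x}$, so $\vv x\,S_{f_S}\,y$ iff $y\in f^\bullet_S\ab{\vv x}$ iff $(\pi\ab{\vv x})Sy$. One direction is trivial: $\vv x\in_\pi\ab{\vv x}$, so $(\pi\ab{\vv x})Sy$ implies $\vv x S y$. The converse, $\vv x S y \implies (\pi\ab{\vv x})Sy$, is \emph{precisely} Lemma~\ref{lemvvxSy}, which applies because all sections of $S$ are stable by hypothesis. Hence $\vv x\,S_{f_S}\,y \iff \vv x\,S\,y$.

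The main obstacle is the first part: verifying that stability of \emph{all} sections (not just the $S[\vv w[-]_i,y]$ ones used in Lemma~\ref{lemvvxSy}, but also the $S[\vv x,-]$ sections in $Y$) delivers complete join-preservation in a clean way, and handling the empty-join (normality) case correctly — in particular being careful that the argument "$\lam_R Y$ is contained in every stable set" is used in the right direction. The $S=S_{f_S}$ half is essentially a one-line application of Lemma~\ref{lemvvxSy} once the identity $f^\bullet_S\ab{\vv x} = \rho_R f_S\ab{\vv x}$ is in hand.
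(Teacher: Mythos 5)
Your overall plan coincides with the paper's, and your second half is essentially the paper's argument verbatim: the clean reason that $f^\bullet_S\ab{\vv x}$ is stable is \eqref{f0stable} — it is an intersection of the stable sections $S[\vv z,-]$ — so there is no need for the detour you attempt through $f_S\ab{\vv x}$; then $\rho_R f_S\ab{\vv x}=\rho_R\lam_R f^\bullet_S\ab{\vv x}=f^\bullet_S\ab{\vv x}$, and the nontrivial implication $\vv xSy\Rightarrow(\pi\ab{\vv x})Sy$ is exactly Lemma \ref{lemvvxSy}. Your use of the stability of the sections $S[\vv w[-]_i,y]$ to absorb the stable closure $\join_J B_j$ is also the paper's step \eqref{f0case}.

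The genuine gap is the step you yourself flag as ``the main obstacle'': passing from the identity at the level of $f^\bullet_S$ to join-preservation of $f_S$. Your stated justification — that applying $\lam_R$ ``converts the intersection on the right to a join, using \eqref{demorg}'' — does not work: \eqref{demorg} converts \emph{unions} into intersections, and the paper explicitly warns that the interchanged laws fail. The true fact is that $\lam_R$ sends intersections of \emph{stable} subsets of $Y$ to joins in $P^+$, and to use it you must first know that each $f^\bullet_S(\vv{A}[B_j/i])$ is stable — this is precisely where stability of the $Y$-sections $S[\vv x,-]$ enters, again via \eqref{f0stable} — and then run the computation the paper gives: $f^\bullet_S(\vv{A}[B_j/i])=\rho_R\lam_R f^\bullet_S(\vv{A}[B_j/i])=\rho_R f_S(\vv{A}[B_j/i])$, hence $\bigcap_j f^\bullet_S(\vv{A}[B_j/i])=\rho_R\bigcup_j f_S(\vv{A}[B_j/i])$ by \eqref{demorg}; combining this with the inclusion \eqref{f0case} and applying the antitone $\lam_R$ gives
$f_S(\vv{A}[\join_J B_j/i])\sub\lam_R\rho_R\bigcup_j f_S(\vv{A}[B_j/i])$,
and the right-hand side is by definition $\join_J f_S(\vv{A}[B_j/i])$, which is \eqref{partialpres}. (Only this one inclusion is needed; the reverse follows from isotonicity, as you note, so you need not prove your displayed equality for $f^\bullet_S$ in both directions.) Finally, once the argument is stated for an arbitrary index set $J$, the case $J=\emptyset$ delivers normality in the same stroke, so your separate and rather tangled discussion of the empty join is unnecessary — though the observation you eventually reach there, that $0=\lam_R Y$ is contained in every stable section, is the correct kernel of it.
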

\begin{proof}
Assume all sections of $S$ are stable.
To prove that $f_S$ preserves joins in the $i$-th coordinate it is enough to prove that the inclusion
\begin{equation} \textstyle \label{partialpres}
f_S(\vv{A}[\join_JB_j/i] )\sub \join_J f_S(\vv{A}[B_j/i])
\end{equation}
holds for any $\vv{A}\in(P^+)^n$ and any collection $\{B_j\in J\}\sub P^+$ with join $\join_JB_j$. This is because the converse  inclusion must hold, since $f_S$ is isotone in the $i$-th coordinate, so 
$f_S(\vv{A}[B_j/i])\sub f_S(\vv{A}[\join_JB_j/i] )$ for all $j\in J$. We will first show that

\begin{equation} \textstyle  \label{f0case}
\bigcap_{j\in J} f^\bullet_S(\vv{A}[B_j/i])\sub f^\bullet_S(\vv{A}[\join_JB_j/i] ).
\end{equation}
To see this, let $y\in f^\bullet_S(\vv{A}[B_j/i])$ for all $j\in J$. Take any $\vv{x}\in_\pi\vv{A}[X/i]$. Then for each $j$, if $x'\in B_j$ then $\vv{x}[x'/i]\in \pi\vv{A}[B_j/i]$, so $\vv{x}[x'/i]S y$ by the definition \eqref{deff0S} of $f^\bullet_S$. This shows that $B_j\sub S[\vv{x}[-]_i,y]$. But the latter section is a stable subset of $X$, hence belongs to $P^+$, so this implies that $\join_JB_j\sub S[\vv{x}[-]_i,y]$. Hence $\vv{x}[z/i]Sy$ for all $z\in\join_JB_j$. As that holds for all $\vv{x}\in_\pi\vv{A}[X/i]$, we get $\big(\pi\vv{A}[\join_JB_j/i] \big)Sy$, hence
   	$y\in f^\bullet_S(\vv{A}[\join_JB_j/i])$, proving \eqref{f0case}.

Now as $f^\bullet_S(\vv{A}[\join_JB_j/i]) $ is stable, being an intersection of stable sections \eqref{f0stable},
we reason that
\begin{align*} 
\bigcap\nolimits_{j\in J} f^\bullet_S(\vv{A}[B_j/i])
&=\bigcap\nolimits_{j\in J} \rho_R\lam_Rf^\bullet_S(\vv{A}[B_j/i])
\\
&=\bigcap\nolimits_{j\in J} \rho_Rf_S(\vv{A}[B_j/i])
\\
&= \rho_R\big(\bigcup\nolimits_{j\in J}f_S(\vv{A}[B_j/i])\big),
\end{align*}
and therefore from \eqref{f0case} that
$$
\rho_R\big(\bigcup\nolimits_{j\in J}f_S(\vv{A}[B_j/i])\big)
\sub f^\bullet_S(\vv{A}[\join\nolimits_JB_j/i] ).
$$
Hence $ \lam_R f^\bullet_S(\vv{A}[\join_JB_j/i] )\sub
\lam_R \rho_R\big(\bigcup\nolimits_{j\in J}f_S(\vv{A}[B_j/i])\big)$,
which is \eqref{partialpres}.

To show that $S=S_{f_S}$, note first that $\vv x S_{f_S}y$ iff $y\in\rho_R f_S\ab{\vv x}$, by definition of $S_f$, while 
$\rho_R f_S\ab{\vv x}=\rho_R\lam_Rf_S^\bullet\ab{\vv x}=f_S^\bullet\ab{\vv x}$ since $f_S^\bullet\ab{\vv x}$ is stable by \eqref{f0stable} as all $S$-sections are stable. Thus    $\vv x S_{f_S}y$ iff  $y\in f_S^\bullet\ab{\vv x}$ iff
$(\pi\ab{\vv x})Sy$.
But if  $(\pi\ab{\vv x})Sy$, then $\vv xSy$ since $\vv x\in\pi\ab{\vv x}$. Conversely, if $\vv xSy$ then $(\pi\ab{\vv x})Sy$ by Lemma \ref{lemvvxSy}.  So altogether, $\vv x S_{f_S}y$  iff $\vv xSy$.
\end{proof}
An alternative proof that $f_S$ is a complete normal operator can be given by showing that the function
$A'\mapsto  f_S(\vv{A}[A'/i] )$ has a right adjoint, namely the function
$B'\mapsto  g^i(\vv{A}[B/i] )$,
where
$$g^i\vv{A}=\bigcap\{S[\vv{x}[-]_i,y]:\vv{x}\in\vv{A}[X/i] \text{ and } y\in\rho_R A_i\}.
$$
The adjointness means that 
$f_S(\vv{A}[A'/i] )\sub B$ iff $A'\sub g^i(\vv{A}[B/i] )$ for any $A',B\in P^+$. It is a standard fact that any lattice operation with a right adjoin preserves all joins.
The functions $B'\mapsto  g^i(\vv{A}[B/i] )$ for each $i<n$ generalise the two residual operations 
$ A_0\backslash A_1$ and $A_0/A_1$
of the above binary fusion operation $A_0\otimes A_1$, as given in \cite{gehr:gene06}. These can be expressed as
\begin{align*}
 A_0\backslash A_1 &= \bigcap\{S[x_0,-,y]:  x_0\in A_0\ \&\ y\in \rho_R A_1 \},
 \\
 A_0/ A_1                 &=\bigcap\{S[-,x_1,y]:  x_1\in A_1\ \&\   y\in \rho_R A_0 \}.
\end{align*} 

Next we consider the construction of dual operators on $P^+$.
An $m$-ary dual operator can be obtained from a relation of the form $T\sub X\times Y^m$. We write $xT\vv{y}$ when $(x,\vv{y})\in T$,  and $xTZ$ when $xT\vv{y}$ holds for all $\vv{y}\in Z$. Sections of $T$ take the form
\begin{align*}
T[-,\vv{y}] &=\{x\in X:xT\vv{y}\} \qquad\qquad \text{ for }\vv{y}\in Y^m,
\\
T[x,\vv{y}[-]_i] &=\{y'\in Y:xT\vv{y}[y'/i]\}  \qquad \text{for }x\in X,\ i<m,\ \vv{y}\in Y^m.
\end{align*}
Dually to Lemma \ref{lemvvxSy} we have a result about sequences of the form 
$\ab{\vv{y}}=(\ab{y_0},\dots,\ab{y_{m-1}})$, where $\ab{y_i}=\rho_R\lam_R\{y_i\}$, the smallest stable subset of $Y$ to contain $y_i$. Suppose all sections of $T$ of the form $T[x,\vv{y}[-]_i] ]$ are stable. Then if $xT\vv y$, we can show by induction on $i\leq m$ that
if $z_j\in\ab{y_j}$ for all $j< i$, then
$xT(z_0,\dots,z_{i-1},y_i,\dots,y_{m-1})$.
Putting $i=m$ then gives

\begin{lemma}  \label{lemxTvvy}
Suppose that all sections of $T$ of the form $T[x,\vv{y}[-]_i] ]$ are stable in $Y$. Then for all $(x,\vv y)\in X\times Y^m$,  if $ xT\vv y$  then every $\vv z\in_\pi\ab{\vv y}$ has $xT\vv z$, i.e.\ $xT (\pi\ab{\vv y})$. 
\qed
\end{lemma}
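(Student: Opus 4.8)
The plan is to prove Lemma~\ref{lemxTvvy} in exact analogy with the proof of Lemma~\ref{lemvvxSy}, simply exchanging the roles of $X$ and $Y$, of $\rho_R$ and $\lam_R$, and of the relation $S\sub X^n\times Y$ with the relation $T\sub X\times Y^m$. Since the excerpt already sketches the inductive statement, the task is to fill in the inductive step carefully and then read off the conclusion by setting $i=m$.

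First I would fix $(x,\vv y)\in X\times Y^m$ with $xT\vv y$, where $\vv y=(\vc{y}{m})$, and set up the induction on $i\leq m$ on the statement
\begin{equation*}
\text{ if $z_j\in\ab{y_j}$ for all $j<i$, then } xT(z_0,\dots,z_{i-1},y_i,\dots,y_{m-1}).
\end{equation*}
The base case $i=0$ is just the hypothesis $xT\vv y$. For the inductive step, assume the statement for some $i<m$ and suppose $z_j\in\ab{y_j}$ for all $j<i+1$. Applying the inductive hypothesis (dropping $z_i$, which is only constrained by $j<i+1$) gives $xT\vv w$ where $\vv w=(z_0,\dots,z_{i-1},y_i,\dots,y_{m-1})$. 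Now consider the section $T[x,\vv w[-]_i]$: since $\vv w[y_i/i]=\vv w$ and $xT\vv w$, this section contains $y_i$, and by hypothesis it is a stable subset of $Y$. Because $\ab{y_i}=\rho_R\lam_R\{y_i\}$ is the smallest stable set containing $y_i$, we get $\ab{y_i}\sub T[x,\vv w[-]_i]$, and since $z_i\in\ab{y_i}$ this yields $xT\vv w[z_i/i]$, i.e.\ $xT(z_0,\dots,z_i,y_{i+1},\dots,y_{m-1})$, which is the statement for $i+1$. Setting $i=m$ gives $xT(\pi\ab{\vv y})$, since $\vv z\in_\pi\ab{\vv y}$ means exactly that $z_j\in\ab{y_j}$ for all $j<m$.

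There is essentially no obstacle here: the argument is a routine transcription of the proof of Lemma~\ref{lemvvxSy} under the $X\leftrightarrow Y$ duality, and the only point requiring a moment's care is to invoke the correct minimality property, namely that $\ab{y_i}$ is the smallest \emph{stable} subset of $Y$ containing $y_i$ (this is the dual of the defining property of $\ab{x_i}$ in $P^+$, using that stable subsets of $Y$ are the fixed points of the closure operator $\rho_R\lam_R$). Since the inductive statement and the overall strategy are already spelled out in the paragraph preceding the lemma, I would keep the write-up to this short induction and then simply remark that putting $i=m$ delivers the claim.

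Alternatively, one could phrase the whole thing without induction by first establishing the one-coordinate fact — if $xT\vv w$ then $xT\vv w[z/i]$ for every $z\in\ab{w_i}$, whenever the section $T[x,\vv w[-]_i]$ is stable — and then iterating over $i=0,1,\dots,m-1$; but the inductive bookkeeping is cleaner, so I would present it as above.
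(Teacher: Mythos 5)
Your proposal is correct and follows exactly the route the paper intends: the paper itself only sketches the dual induction on $i\leq m$ before stating Lemma \ref{lemxTvvy}, and your write-up fills in that inductive step precisely as in the proof of Lemma \ref{lemvvxSy}, using stability of the section $T[x,\vv w[-]_i]$ and the minimality of $\ab{y_i}=\rho_R\lam_R\{y_i\}$ among stable subsets of $Y$ containing $y_i$. No gaps; this matches the paper's argument.
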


We now define an $m$-ary function $g_T$ on $P^+$.  
For $\vv A\in(P^+)^m$ put 
\begin{align}
g_T\vv{A}
&=\{x\in X: xT( \pi\rho_R\vv{A}) \}, \nonumber
\\
&= \{x\in X:  \forall\vv{y}(\vv{y}\in\pi\rho_R\vv{A}\text{ implies }xT\vv y)\} \nonumber
\\
&=\bigcap\{T[-,\vv{y}]:\vv{y}\in_\pi\rho_R\vv{A}\}.  \label{GTA}
\end{align}
The condition for $x\in g_T\vv A$ can be spelt out as the first-order expression
\begin{equation}\textstyle  \label{firstgT}
\forall \vv y\big(\bigwedge_{i<m}\forall z(z\in A_i\to zRy_i)\to xT\vv y\big).
\end{equation}

We exemplify $g_T$ with the case that $m=1$. Then $T$ is a binary relation from $X$ to $Y$, inducing a unary operation on $P^+$ which we denote  more suggestively by $\bo_T$. Thus
$$
\bo_T A= \{x\in X: xT\rho_R A\}.
$$
As a binary relation, $T$ can also be viewed as  a subset of $X^n\times Y$ with $n=1$, so it induces a unary operation  
$\di_T$ on $P^+$  having
$$
\di_T A= \lam_R\{y\in Y: ATy\}.
$$
When all sections of $T$ are stable,  $\bo_T$ is a dual operator and $\di_T$ is an operator that
 is left adjoint to $\bo_T$ in the sense that for any $A,B\sub X$,
$$
\di _TA\sub B \quad\text{iff}\quad A\sub\bo_T B.
$$
See  \cite{conr:cate16,conr:algo16} for discussion of pairs of `modalities' like $\bo_T$ and $\di_T$.

\begin{theorem} \label{gTcompletedual}
If all sections of a relation $T\sub X\times Y^m$  are stable, then $g_T$ is a complete normal dual operator.
\end{theorem}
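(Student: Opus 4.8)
The plan is to prove directly that $g_T$ preserves arbitrary meets in each coordinate; this simultaneously gives the dual-operator property (preservation of binary meets) and completeness and normality (preservation of all meets, including the empty one). The whole argument is the meet-side mirror of the proof of Theorem \ref{fScomplop}, with $\rho_R$ and $\lam_R$, and joins and meets, interchanged. First I would record two preliminary facts. Fix $\vv A\in(P^+)^m$, a coordinate $i<m$, and a family $\{B_j:j\in J\}\sub P^+$; since $P^+$ is complete with $\meet_J B_j=\bigcap_J B_j$, the tuple $\vv A[\bigcap_J B_j/i]$ lies in $(P^+)^m$ and $g_T$ is defined on it. Next, $g_T$ is isotone in each coordinate: by \eqref{GTA}, enlarging $A_i$ shrinks $\rho_R A_i$, hence shrinks the index set $\pi\rho_R\vv A$, hence can only enlarge the intersection $g_T\vv A$. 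Consequently $g_T(\vv A[\bigcap_J B_j/i])\sub\bigcap_J g_T(\vv A[B_j/i])$ holds automatically, and it remains only to prove the reverse inclusion $\bigcap_J g_T(\vv A[B_j/i])\sub g_T(\vv A[\bigcap_J B_j/i])$, while allowing $J=\emptyset$ --- the case $J=\emptyset$ reads $X\sub g_T(\vv A[1/i])$, which is exactly normality in coordinate $i$.

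For the reverse inclusion, take $x\in\bigcap_J g_T(\vv A[B_j/i])$; by \eqref{GTA} it suffices to show $xT\vv y$ for every $\vv y$ with $y_k\in\rho_R A_k$ for all $k\neq i$ and $y_i\in\rho_R(\bigcap_J B_j)$. Fix such a $\vv y$ and consider the section $C=T[x,\vv y[-]_i]=\{y'\in Y:xT\vv y[y'/i]\}$, which is stable in $Y$ by the hypothesis on $T$. For each $j\in J$, the fact that $x\in g_T(\vv A[B_j/i])$ applies in particular to the tuples $\vv y[y'/i]$ with $y'$ ranging over $\rho_R B_j$ (legitimate because $y_k\in\rho_R A_k$ for $k\neq i$), giving $xT\vv y[y'/i]$ for all such $y'$, i.e.\ $\rho_R B_j\sub C$. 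Hence $\bigcup_J\rho_R B_j\sub C$. Since each $B_j$ is stable, $B_j=\lam_R\rho_R B_j$, so by the De Morgan law \eqref{demorg} we have $\bigcap_J B_j=\lam_R\bigcup_J\rho_R B_j$, and therefore $\rho_R(\bigcap_J B_j)=\rho_R\lam_R\bigcup_J\rho_R B_j$ is precisely the smallest stable superset of $\bigcup_J\rho_R B_j$. As $C$ is a stable superset of $\bigcup_J\rho_R B_j$, this yields $\rho_R(\bigcap_J B_j)\sub C$, so $y_i\in C$, i.e.\ $xT\vv y[y_i/i]=xT\vv y$, as required. This completes the argument for all $J$, including $J=\emptyset$ (where $\bigcup_J\rho_R B_j=\emptyset$, $\rho_R(\bigcap_J B_j)=\rho_R\lam_R\emptyset$, and $C$ being stable forces $\rho_R\lam_R\emptyset\sub C$).

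I do not expect a real obstacle; this is a routine dualization. The one point that needs care is the bookkeeping of inclusion directions --- because $\rho_R$ and $\lam_R$ are antitone, the nontrivial direction here is preservation of meets (not joins), and one must keep straight that $\rho_R$ sends the intersection $\bigcap_J B_j$ in $P^+$ to the stable closure of $\bigcup_J\rho_R B_j$ in $\wp(Y)$. The hypothesis is used in exactly two places, just as in Theorem \ref{fScomplop}: the stability of the sections $T[x,\vv y[-]_i]$ (to close $\bigcup_J\rho_R B_j$ up to $\rho_R(\bigcap_J B_j)$ inside $C$), and the stability of the $B_j$ (to get the identity $\bigcap_J B_j=\lam_R\bigcup_J\rho_R B_j$ via \eqref{demorg}). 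As an alternative, more conceptual route one could instead exhibit, for fixed $\vv A$ and $i$, a left adjoint to the map $A'\mapsto g_T(\vv A[A'/i])$ on $P^+$ --- dual to the right-adjoint construction noted after Theorem \ref{fScomplop} --- since any lattice map possessing a left adjoint preserves all meets; but the direct computation above is the shorter path.
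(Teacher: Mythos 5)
Your argument is correct and is essentially the paper's own proof: isotonicity of $g_T$ gives the easy inclusion, and the reverse inclusion is obtained exactly as in the paper by fixing $\vv y\in_\pi\rho_R(\vv A[\bigcap_J B_j/i])$, showing $\rho_R B_j\sub T[x,\vv y[-]_i]$ for each $j$, and then using stability of that section together with stability of the $B_j$'s to conclude $\rho_R(\bigcap_J B_j)\sub T[x,\vv y[-]_i]$ (the paper routes this last step through $\lam_R T[x,\vv y[-]_i]\sub \lam_R\rho_R B_j=B_j$ rather than your De Morgan/smallest-stable-superset phrasing, but it is the same computation, and your explicit treatment of $J=\emptyset$ just makes the normality case visible). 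The only point you leave implicit is that $g_T\vv A$ is itself stable --- immediate from \eqref{GTA}, being an intersection of stable sections $T[-,\vv y]$ --- which is needed for $g_T$ to be an operation on $P^+$ at all, and which the paper's proof records as its first line.
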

\begin{proof}
If all sections of $T$ are stable then $g_T\vv{A}$ is stable by \eqref{GTA}, so $g_T$ is an operation on $P^+$.
$g_T$ is isotone in each coordinate. Hence it satisfies the inclusion
$$
\textstyle
g_T(\vv{A}[\,\bigcap\nolimits_JB_j/i] )\sub \bigcap\nolimits_J g_T(\vv{A}[B_j/i]).
$$
To show that $g_T$ is a complete normal dual operator, we prove that the last inclusion is an equality for any $i<m$. Let $x\in\bigcap\nolimits_{j\in J} g_T(\vv{A}[B_j/i])$, where $\{B_j:j\in J\}\sub P^+$. Then
\begin{equation} \label{allJrhoBj}
\forall j\in J\ \forall \vv{y}\in_\pi\rho_R(\vv{A}[B_j/i]),\ xT\vv{y}.
\end{equation}
Now take any $\vv{y}\in_\pi\rho_R(\vv{A}[\bigcap_JB_j/i])$. Then for any $j\in J$, if $y'\in\rho_R B_j$ then
$\vv{y}[y'/i]\in_\pi\rho_R(\vv{A}[B_j/i])$, so $xT(\vv{y}[y'/i])$ by \eqref{allJrhoBj}. This shows that 
$
\rho_RB_j\sub T[x,\vv{y}[-]_i].
$
Therefore $\lam_R T[x,\vv{y}[-]_i]  \sub\lam_R \rho_RB_j=B_j$. Hence 
$\lam_R T[x,\vv{y}[-]_i]\sub\bigcap_JB_j$, giving
$$\textstyle
\rho_R\bigcap\nolimits_JB_j\sub \rho_R\lam_R T[x,\vv{y}[-]_i] =T[x,\vv{y}[-]_i]
$$ as    $T[x,\vv{y}[-]_i]$ is stable.
But $y_i\in\rho_R\bigcap_JB_j$, so then $y_i\in T[x,\vv{y}[-]_i]$, making $xT\vv{y}$.
Altogether we have shown that
$$\textstyle
\vv{y}\in_\pi\rho_R(\vv{A}[\,\bigcap\nolimits_JB_j/i] )\text{ implies }xT\vv{y},
$$
which means that $x\in g_T(\vv{A}[\,\bigcap\nolimits_JB_j/i] )$, completing the proof that $g_T$ preserves all meets in the $i$-th coordinate.
\end{proof}

It can also be shown that if $g$ is any complete normal dual operator on $P^+$, then $g$ is equal to  $g_{T_g}$, where 
$T_g\sub X\times Y^m$ is defined by
$$
xT_g\vv{y} \quad\text{iff}\quad x\in g(\lam_R\{y_0\},\dots,\lam_R\{y_{m-1}\}),
$$
making $T_g[-,\vv{y}]=g(\lam_R\{y_0\},\dots,\lam_R\{y_{m-1}\})$.
Then using the second equation from \eqref{joinmeetdense}, for any $\vv A\in(P^+)^m$ we get
\begin{align*}
g\vv{A}  
&= g\Big(\bigcap\nolimits_{y_0\in \rho_RA_0}\lam_R\{y_0\},\dots, \bigcap\nolimits_{y_{m-1}\in \rho_RA_{m-1}}\lam_R\{y_{m-1}\}\Big)
\\
&=\bigcap\{g(\lam_R\{y_0\},\dots,\lam_R\{y_{m-1}\}):\vv{y}\in_\pi\rho_R\vv{A}\} \quad\text{by the dual of \eqref{joincomplete}},
\\
&=\bigcap\{T_g[-,\vv{y}]:\vv{y}\in_\pi\rho_R\vv{A}\}
\\
&=g_{T_g}\vv{A} \qquad\text{by }\eqref{GTA}.
\end{align*}
Also, if all sections of $T\sub X\times Y^m$ are stable, then $T$ is equal to the relation $T_{g_T}$ determined by $g_T$.
For, if $xT_{g_T}\vv y$ then by definition of $T_{g_T}$, 
$x\in g_T\vv{A}$ where $\vv{A}=(\lam_R\{y_0\},\dots,\lam_R\{y_{m-1}\})$, hence by definition of $g_T$, we get
$xT(\pi\rho_R\vv{A}) $. But $\vv y\in \pi\rho_R\vv{A}$ since $y_i\in\rho_R\lam_R\{y_i\}$ for $i<m$, so this implies $xT\vv y$. Conversely, if  $xT\vv y$, then by Lemma \ref{lemxTvvy}, $xT (\pi\ab{\vv y})$.  But here $\pi\ab{\vv y}=\pi\rho_R\vv{A}$, so this gives $x\in g_T\vv A$ and hence  $xT_{g_T}\vv y$ .
Altogether, $T=T_{g_T}$.

We are going to work with lattices having additional operators and dual operators, and we need a convenient notation for this. Let  $\Om$ be a set of function symbols with given finite arities. Define an \emph{$\Om$-lattice} to be
an algebra of the form
$$
\LL=(\LL_0,\{\f^\LL:\f\in\Om\}),
$$
where $\LL_0$ is a  lattice,  and for $n$-ary $\f\in\Om$, $\f^\LL$ is an $n$-ary operation on  $\LL_0$. 
Furthermore, we will take $\Om$ to be presented as the union $\Lam\cup\Ups$ of disjoint subsets $\Lam$ and $\Ups$  of  `lower' and `upper' symbols, respectively  (the reason for these names will emerge later---see \eqref{Lsigom}).
An $\Om$-lattice will be called a \emph{normal lattice with operators}---an $\Om$-NLO or just NLO---if each lower symbol denotes a normal operator in $\LL$ and each upper symbol denotes a normal dual operator.

We define an \emph{$\Om$-polarity} to be a structure of the form 
$$
P=(X,Y,R,\{S_\f:\f\in\Lam\},\{T_\g:\g\in\Ups\}),
$$
based on a polarity $P_0=(X,Y,R)$, such that for any $n$-ary lower symbol $\f\in\Lam$, $S_\f\sub X^n\times Y$ and all sections of $S_\f$ are stable;  and for any $m$-ary upper symbol $\g\in\Ups$, $T_\g\sub X\times Y^m$ and all sections of $T_\g$ are stable. Then $P$ gives rise to the $\Om$-NLO
\begin{equation}   \label{PplusOm}
P^+=(P_0^+, \{f_{S_\f}:\f\in\Lam\},\{g_{T_\g}:\g\in\Ups\}),
\end{equation}
where $f_{S_\f}$ is the complete normal operator determined by $S_\f$, as per  Theorem \ref{fScomplop}, 
and $g_{T_\g}$ is the complete normal dual operator determined by $T_\g$,  as per  Theorem \ref{gTcompletedual}.

\section{Bounded morphisms} \label{secbmorph}

To simplify the exposition, we fix two arbitrary natural numbers $n$ and $m$ and assume from now that $\Lam$ consists of a single $n$-ary function symbol while $\Ups$ consists of a single $m$-ary one.
Then an $\Om$-polarity has the typical form
$$
P=(X,Y,R,S,T)
$$
with $S\sub X^n\times Y$, $Y\sub X\times Y^m$, and all sections of $S$ and $T$ being stable. We lift the relations $\le_1$ and $\le_2$ to tuples coordinate-wise, putting $\vv x\le_1\vv{z}$ iff $x_i\le_1 z_i$ for all $i<n$; with
$[\vv x)_1=\{\vv{z}\in X^n:\vv x\le_1\vv z\}$ etc.

Let $P$ and $P'$ be $\Om$-polarities of the kind just described. For a function $\al:X\to X'$ we put
\begin{align*}
\al(\vv x)&=(\al(x_0),\dots,\al(x_{n-1})), 
\\
\al \inv[\vv{x'})_1&=\{\vv x\in X^n:\vv{x'}\le_1'\al(\vv x)\} \enspace\text{etc.}
\end{align*}

\begin{definition}  \label{defmorph}
A \emph{bounded morphism from $P$ to $P'$} is a pair $\al,\be$ of \emph{isotone}
maps $\al\colon(X,{\le_1)}\to (X',\le'_1)$ and  $\be:(Y,\le_2)\to (Y',\le_2')$
that satisfy the following back and forth conditions.

\begin{enumerate}
\item[(1$_R$)]
$\al  (x)R'\be  (y)$ implies $xRy$, \quad all $x\in X, y\in Y$.
\item[(2$_R$)]
$(\al \inv[x')_1)Ry$ implies $x'R'\be (y)$, \quad all $x'\in X',y\in Y$.
\item[(3$_R$)]
$xR\be \inv[y')_2$ implies $\al (x)R'y'$, \quad all $x\in X,y'\in Y'$.
\item[(1$_S$)]
$\al(\vv x)S'\be  (y)$ implies $\vv xSy$, \quad all $\vv x\in X^n,\  y\in Y$.
\item[(2$_S$)]
$(\al \inv[\vv{x'})_1)Sy$ implies $\vv{x'}S'\be (y)$, \quad all $\vv{x'}\in (X')^n,\ y\in Y$.
\item[(1$_T$)]
$\al  (x)T'\be  (\vv y)$ implies $xT\vv y$, \quad all $x\in X,\ \vv y\in Y^m$.
\item[(2$_T$)]
$xT\be \inv[\vv{y'})_2$ implies $\al (x)T'\vv{y'}$, \quad all $x\in X,\ \vv{y'}\in (Y')^m$.
\end{enumerate}
\end{definition}

In condition (3$_R$), $\be \inv[y')_2$ is the set $\{y\in Y:y'\le_2'\be(y)\}$. Thus the condition can be expressed as
$$
\text{ if $\forall y(y'\le_2'\be(y)\text{ implies }xRy)$, then $\al(x)R' y'$}.
$$
Contrapositively this says
\begin{equation} \label{pmorph}
\text{ if not $\al(x)R'y'$, then $\exists y(y'\le_2'\be(y)\text{ and not }xRy)$}.
\end{equation}
Similar formulations hold for (2$_R$), (2$_S$) and (2$_T$). Note that the converse of \eqref{pmorph} is equivalent to (1$_R$), which follows from this converse by putting $y'=\be(y)$.  To derive the converse, observe that if not $xRy$ then 
(1$_R$) implies not $\al(x)R'\be(y)$, so then if $y'\le_2'\be(y)$ we get not $\al(x)R'y'$ by definition of $\le_2'$.

\begin{remark} [\bf Source of Definition 8]\label{explainmorph}
Suppose that $X=Y$ and $R$ is symmetric. Then we have the kind of polarity used in \cite{gold:sema74} to provide a semantics for orthologic, in which the operation $\lam_R$ ($=\rho_R$) is an orthocomplementation modelling a negation connective. An ortholattice representation was given in \cite{gold:ston75} in which the points of the representing space are filters of the lattice, and which can be seen as a precursor to the canonical structures of Section \ref{seccanstr} below.
A translation of orthologic  into classical modal logic was obtained in \cite{gold:sema74} by transforming an `orthoframe' $(X,R)$ into the Kripke frame $(X,\widehat{R})$, where $\widehat{R}=X^2\setminus R$ is the complementary relation to $R$. Taking $\al=\be$, so that (2$_R$) and (3$_R$) become equivalent, then
if the conditions (1$_R$)--(3$_R$) are re-expressed in terms of $\widehat{R}$ and $\widehat{R'}$,   they become similar to the standard definition of a bounded morphism  between the Kripke frames $(X,\hat R)$ and $(X',\widehat{R'})$, with (1$_R$) being equivalent to \eqref{forth} for $\widehat{R}$, and   (3$_R$) in the form \eqref{pmorph} amounting to \eqref{back} for $\widehat{R}$ (except for the relation $\le'_2$).  The other conditions in Definition \ref{defmorph} give  parallel back and forth properties for the relations $S$ and $T$.
This account may explain why it is often natural to use contrapositive reasoning in proofs of properties of bounded morphisms, as we shall see.

The use of quasi-orderings $\le_i$ is well established in theories of duality for non-Boolean lattices \cite{prie:repr70,urqu:topo78} and relates to the fact that  the points of dual structures are typically filters and/or ideals that may not be maximal. Such points are naturally quasi-ordered by the set inclusion relation $\sub$. 
The use of a quasi-order to formulate bounded morphism conditions like  \eqref{pmorph} is also well established \cite[p.192]{gold:vari89}, \cite[p.698]{cela:prie99}.
We could adopt a more axiomatic approach and let $\le_1$ and $\le_2$ be any additional quasi-orders that satisfy the condition \eqref{monR}, which is equivalent to requiring only that
$x\le_1 x' $ implies  $\rho_R\{x\}\sub  \rho_R\{x'\}$ and
$y\le_2 y'$ implies $ \lam_R\{y\}\sub  \lam_R\{y'\}$.
But in a polarity, suitable quasi-orders can be defined as in \eqref{leone} and \eqref{letwo}, and shown to give the relation $\sub$ in a canonical structure: see  Lemma \ref{lesub}. 

The requirement that $\al$ and $\be$ be isotone is needed to ensure that the class of bounded morphisms is closed under functional composition and gives a category: see Lemma \ref{compmorph}.
\qed
\end{remark}

We will show that a  bounded morphism makes the following diagrams commute, where the operation $\wp_{\le}$ gives the set of all $\le$-upsets.

$$
\xymatrix{
{\wp_{\le_1}X} \ar[r]^{\rho_{R} }  &{\wp_{\le_2}Y} 
\\
{\wp_{\le_1'}X'} \ar[u]^{\al\inv} \ar[r]^{\rho_{R'}} &{\wp_{\le_2'}Y'}   \ar[u]_{\be\inv}   }
\qquad\qquad
\xymatrix{
{\wp_{\le_1}X}   &\ar[l]_{\lam_{R} } {\wp_{\le_2}Y} 
\\
{\wp_{\le_1'}X'} \ar[u]^{\al\inv} &{\wp_{\le_2'}Y'} \ar[l]_{\lam_{R'}}   \ar[u]_{\be\inv}   }
$$

\begin{lemma} \label{morph}
Let $\al,\be\colon P\to P'$ be a bounded morphism and $A\sub X'$ and $B\sub Y'$.
\begin{enumerate}[\rm(1)]
\item 
$\be \inv(\rho_{R'} A)\sub\rho_R(\al \inv A)$, with 
$\be \inv(\rho_{R'} A)=\rho_R(\al \inv A)$ when $A$ is a $\le_1'$-upset.
\item
$\al \inv(\lam_{R'} B)\sub\lam_R(\be \inv B)$, with 
$\al \inv(\lam_{R'} B)=\lam_R(\be \inv B)$ when  $B$ a $\le_2'$-upset.
\item
If  $A\in (P')^+$ then $\al \inv A\in P^+$.
\end{enumerate}
\end{lemma}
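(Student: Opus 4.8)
The three parts build on each other, and all flow from unwinding the definitions together with the back-and-forth conditions in Definition~\ref{defmorph}. I would prove (1) first, then (2) by the same method (or by a De Morgan/adjunction argument), and finally derive (3) as a quick corollary using Lemma~\ref{upsets}.

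\textbf{Part (1).} The inclusion $\be\inv(\rho_{R'}A)\sub\rho_R(\al\inv A)$ is the ``preservation'' half and should come directly from condition (1$_R$). Let $y\in\be\inv(\rho_{R'}A)$, so $\be(y)\in\rho_{R'}A$, meaning $ARy'$ where $y'=\be(y)$, i.e.\ $\al(x)R'\be(y)$ for every $x$ with $\al(x)\in A$. I want $y\in\rho_R(\al\inv A)$, i.e.\ $xRy$ for all $x\in\al\inv A$. But for such $x$ we have $\al(x)\in A$, hence $\al(x)R'\be(y)$, hence $xRy$ by (1$_R$). For the reverse inclusion under the hypothesis that $A$ is a $\le_1'$-upset, I would use condition (2$_R$) in its contrapositive form \eqref{pmorph}-style reading. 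Suppose $y\in\rho_R(\al\inv A)$, so $xRy$ whenever $\al(x)\in A$. I must show $\be(y)\in\rho_{R'}A$, i.e.\ $x'R'\be(y)$ for every $x'\in A$. Fix such an $x'$. Since $A$ is a $\le_1'$-upset and $x'\in A$, every $x\in\al\inv[x')_1$ (i.e.\ every $x$ with $x'\le_1'\al(x)$) has $\al(x)\in A$, hence $xRy$; so $(\al\inv[x')_1)Ry$. Then condition (2$_R$) gives exactly $x'R'\be(y)$, as required.

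\textbf{Part (2).} This is the mirror image, using $\lam$ in place of $\rho$, condition (3$_R$) in place of (2$_R$), and $\le_2'$-upsets in place of $\le_1'$-upsets. For $\al\inv(\lam_{R'}B)\sub\lam_R(\be\inv B)$: if $x\in\al\inv(\lam_{R'}B)$ then $\al(x)R'B$, i.e.\ $\al(x)R'y'$ for all $y'\in B$; to show $x\in\lam_R(\be\inv B)$ I need $xRy$ for all $y\in\be\inv B$, and for such $y$, $\be(y)\in B$ gives $\al(x)R'\be(y)$, hence $xRy$ by (1$_R$). For the equality when $B$ is a $\le_2'$-upset: suppose $x\in\lam_R(\be\inv B)$, i.e.\ $xRy$ for all $y\in\be\inv B$; fix $y'\in B$; since $B$ is a $\le_2'$-upset, every $y\in\be\inv[y')_2$ (i.e.\ $y'\le_2'\be(y)$) has $\be(y)\in B$, hence $y\in\be\inv B$, hence $xRy$; so $xR\be\inv[y')_2$, and (3$_R$) delivers $\al(x)R'y'$. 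As $y'\in B$ was arbitrary, $\al(x)R'B$, i.e.\ $x\in\al\inv(\lam_{R'}B)$.

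\textbf{Part (3).} Here I would simply combine the two previous parts. Suppose $A\in(P')^+$, so $A=\lam_{R'}\rho_{R'}A$. By Lemma~\ref{upsets}, $A$ is a $\le_1'$-upset and $\rho_{R'}A$ is a $\le_2'$-upset. Then $\al\inv A=\al\inv(\lam_{R'}\rho_{R'}A)=\lam_R(\be\inv\rho_{R'}A)$ by part~(2) applied to the $\le_2'$-upset $B=\rho_{R'}A$, and $\be\inv(\rho_{R'}A)=\rho_R(\al\inv A)$ by part~(1) applied to the $\le_1'$-upset $A$. Hence $\al\inv A=\lam_R\rho_R(\al\inv A)$, which is exactly the statement that $\al\inv A$ is stable, i.e.\ $\al\inv A\in P^+$.

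\textbf{Anticipated obstacle.} The only delicate point is making sure the right quantifier gymnastics are used in the reverse inclusions of (1) and (2): one must feed condition (2$_R$) (resp.\ (3$_R$)) the hypothesis ``$(\al\inv[x')_1)Ry$'' (resp.\ ``$xR\be\inv[y')_2$''), and that hypothesis is obtained precisely because $A$ (resp.\ $B$) is an upset, so that the whole ``cone'' $\al\inv[x')_1$ lands inside $\al\inv A$. Getting the direction of $\le_1'$ and $\le_2'$ right---and remembering that isotonicity of $\al,\be$ is what keeps everything coherent---is the main thing to be careful about; the rest is routine unwinding.
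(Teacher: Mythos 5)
Your proposal is correct and follows essentially the same route as the paper: (1$_R$) gives the forward inclusions, the upset hypothesis lets you feed the cone $\al\inv[x')_1$ (resp.\ $\be\inv[y')_2$) into (2$_R$) (resp.\ (3$_R$)) for the reverse inclusions, and (3) follows by applying (2) to the stable, hence $\le_2'$-upset, set $\rho_{R'}A$. The only (harmless) cosmetic difference is in (3): the paper concludes stability from the fact that any set of the form $\lam_R B$ is stable, whereas you additionally invoke part (1) to land directly on $\al\inv A=\lam_R\rho_R(\al\inv A)$; both are fine.
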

\begin{proof}
\begin{enumerate}[\rm(1)]
\item 
Let $y\in \be \inv(\rho_{R'} A)$, so $AR'\be (y)$.
If $x\in \al\inv A$, then $\al(x)\in A$, so then $\al(x)R'\be (y)$, hence $xRy$ by (1$_R$). This shows $(\al\inv A)Ry$, making  
$y\in \rho_R(\al\inv A)$.

Suppose further that $A$ is a $\le_1'$-upset. Let $y\in \rho_R(\al\inv A)$, so $(\al\inv A)Ry$. If  $x'\in A$, then $[x')_1\sub A$ as $A$ is an upset, so $\al\inv[x')_1\sub \al\inv A$, hence $\al\inv[x')_1Ry$, and so $x'R'\be (y)$ by (2$_R$). This shows $AR'\be (y)$, so $\be (y)\in \rho_{R'} A$ and 
$y\in \be \inv(\rho_{R'} A)$.

\item
Like (1), but using  (1$_R$) and (3$_R$). 

\item
$A=\lam_{R'}(\rho_{R'} A)$ as $A$ is stable, so
$\al\inv A=\al\inv\lam_{R'}(\rho_{R'} A)
=\lam_R\be \inv(\rho_{R'} A)$ by part (2) as $\rho_{R'} A$ is stable, therefore a $\le'_2$-upset.
But any subset of $X$ of the form $\lam_R B$ is stable and so belongs to $P^+$.

\end{enumerate}
\end{proof}

\begin{corollary}   \label{equivmorph}
\begin{enumerate}[\rm(1)]
\item 
A pair $\al,\be$ satisfies  (1$_R$) and (2$_R$) if, and only if,
\begin{equation}  \label{equiv12R}
\text{$\be \inv(\rho_{R'} A)=\rho_R(\al \inv A)$ for all stable $A\sub X'$.}
\end{equation}
\item
$\al,\be$ satisfies  (1$_R$) and (3$_R$) if, and only if,
\begin{equation}  \label{equiv13R}
\text{$\al \inv(\lam_{R'} B)=\lam_R(\be \inv B)$ for all stable $B\sub Y'$.}
\end{equation}
\end{enumerate}
\end{corollary}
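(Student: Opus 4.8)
Corollary~\ref{equivmorph} is a routine consequence of Lemma~\ref{morph} in one direction and a short direct argument in the other, and I would organise the proof accordingly. For the two \emph{only if} implications, observe that the proof of Lemma~\ref{morph}(1) establishes $\be\inv(\rho_{R'}A)\sub\rho_R(\al\inv A)$ using only condition (1$_R$), and the reverse inclusion for $\le_1'$-upsets $A$ using only (2$_R$); since every stable $A\sub X'$ is a $\le_1'$-upset by Lemma~\ref{upsets}, the pair of conditions (1$_R$) and (2$_R$) already yields \eqref{equiv12R}, with no other hypothesis of Definition~\ref{defmorph} invoked. Symmetrically, the proof of Lemma~\ref{morph}(2) uses only (1$_R$) and (3$_R$), so these give \eqref{equiv13R}.

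For the \emph{if} directions the observation to record first is that, directly from \eqref{leone}, the set $[x)_1$ coincides with $\lam_R\rho_R\{x\}=\ab{x}$, the smallest stable subset of $X$ containing $x$ (both are $\{z:\rho_R\{x\}\sub\rho_R\{z\}\}$), and dually, from \eqref{letwo}, $[y)_2=\rho_R\lam_R\{y\}$ is the smallest stable subset of $Y$ containing $y$. In particular these sets are \emph{stable}, so \eqref{equiv12R} and \eqref{equiv13R} apply to them. Assuming \eqref{equiv12R}, I would derive (1$_R$) by taking, for given $x,y$ with $\al(x)R'\be(y)$, the stable set $A=\lam_{R'}\rho_{R'}\{\al(x)\}$; then $\al(x)\in A$ and $\rho_{R'}A=\rho_{R'}\{\al(x)\}$ (using $\rho_{R'}\lam_{R'}\rho_{R'}=\rho_{R'}$), so $\be(y)\in\rho_{R'}A$, hence $y\in\be\inv(\rho_{R'}A)=\rho_R(\al\inv A)$, and since $x\in\al\inv A$ this gives $xRy$. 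For (2$_R$): if $(\al\inv[x')_1)Ry$, then with $A:=[x')_1=\lam_{R'}\rho_{R'}\{x'\}$ stable we get $y\in\rho_R(\al\inv A)=\be\inv(\rho_{R'}A)$, so $\be(y)\in\rho_{R'}A=\rho_{R'}\{x'\}$, i.e.\ $x'R'\be(y)$.

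Part~(2) is then handled dually, assuming \eqref{equiv13R} and exchanging the roles of $X$ and $Y$, of $\rho$ and $\lam$, of $\le_1$ and $\le_2$, and of conditions (2$_R$) and (3$_R$): for (1$_R$) one uses $B=\rho_{R'}\lam_{R'}\{\be(y)\}$, and for (3$_R$) one uses that $[y')_2=\rho_{R'}\lam_{R'}\{y'\}$ is stable. The single point that requires attention — and the reason the converse works at all — is exactly the identification of $[x')_1$ (resp.\ $[y')_2$) with the stable hull of the corresponding singleton: if $[x')_1$ were merely contained in its stable hull $A$, then from $(\al\inv[x')_1)Ry$ one could not conclude $(\al\inv A)Ry$, since $\al\inv[x')_1\sub\al\inv A$, and (2$_R$) would fail to follow. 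Isotonicity of $\al$ and $\be$ is not needed anywhere in this argument.
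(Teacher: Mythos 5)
Your proposal is correct and follows essentially the same route as the paper: the only-if direction is obtained from Lemma \ref{morph} (whose proof indeed uses only (1$_R$), (2$_R$), resp.\ (1$_R$), (3$_R$)) together with the fact that stable sets are upsets, and the if direction applies \eqref{equiv12R}, \eqref{equiv13R} to the stable hulls $\lam_{R'}\rho_{R'}\{x'\}$ and $\rho_{R'}\lam_{R'}\{y'\}$ of singletons, exactly the sets the paper's proof works with. The only (harmless) difference is presentational: the paper derives (2$_R$) and (3$_R$) contrapositively, using just the inclusion of $\lam_{R'}\rho_{R'}\{x'\}$ in $[x')_1$, whereas you run the argument directly via the correct identity $[x')_1=\lam_{R'}\rho_{R'}\{x'\}$ (and its $\le_2'$ analogue), which is immediate from \eqref{leone} and \eqref{letwo}.
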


\begin{proof}
(1): If  (1$_R$) and (2$_R$) hold, then \eqref{equiv12R} follows by part (1) of the Lemma because stable sets are upsets.
Conversely, assume \eqref{equiv12R}. To prove (1$_R$): if $\al(x)R'\be(y)$, then
$$
y\in\be\inv\rho_{R'}\{\al(x)\}=\be\inv\rho_{R'}\lam_{R'}\rho_{R'}\{\al(x)\}=
\rho_R\al \inv \lam_{R'}\rho_{R'}\{\al(x)\},
$$
with the last equation holding by \eqref{equiv12R}. But $x\in \al \inv \lam_{R'}\rho_{R'}\{\al(x)\}$, so then $xRy$.

For  (2$_R$), suppose that not $x'R'\be(y)$. Then
$
y\notin\be\inv\rho_{R'}\{x'\} 
=\rho_R\al \inv \lam_{R'}\rho_{R'}\{x'\}
$
as above. Hence there exists $x\in X$ such that not $xRy$ and
$\al(x)\in \lam_{R'}\rho_{R'}\{x'\}$. Then
$\rho_{R'}\{x'\}=\rho_{R'}\lam_{R'}\rho_{R'}\{x'\}\sub \rho_{R'}\{\al(x)\}$, so $x'\le_1'\al(x)$ and 
$x\in \al \inv[x')_1$. Since not $xRy$ this gives not $(\al \inv[x')_1)Ry$ as required.

The proof of (2) is similar.
\end{proof}

\begin{remark} [\bf On conditions \eqref{equiv12R} and \eqref{equiv13R}] \label{conccty} 
Formal Concept Analysis defines a \emph{concept} of a context/polarity $P=(X,Y,R)$  to be a pair $(A,B)$ of subsets, of $X$ and $Y$ respectively, with $A=\lam_R B$ and $B=\rho_R A$. The set of concepts is partially ordered by putting
$(A,B)\leq(C,D)$ iff $A\sub C$ (iff $D\sub B$), forming a complete lattice isomorphic to $P^+$.
Ern\'e \cite{erne:cate05} defined a pair $X\xrightarrow{\al} X',Y\xrightarrow{\be} Y'$ to be \emph{concept continuous} if
$(\al\inv A,\be\inv B)$ is a concept of $P$ whenever $( A, B)$ is a concept of $P'$.
He showed in \cite[Prop.~3.2]{erne:cate05} that $\al,\be$ is concept continuous iff the follow conditions hold.
\begin{align} \label{pmorph1}
\text{not $x'R'\be(y)$} &\text{ iff there is an $x$ with not $xRy$ and } \rho_{R'}\{x'\}\sub \rho_{R'}\{\al(x)\}, 
\\
\label{pmorph2}
\text{not $\al(x)R'y'$} &\text{ iff there is a $y$ with not $xRy$ and }\lam_{R'}\{y'\}\sub \lam_{R'}\{\be(y)\}.
\end{align}
Now it is readily seen that $\al,\be$ is concept continuous iff conditions \eqref{equiv12R} and \eqref{equiv13R} of our Corollary \ref{equivmorph} hold,  which is equivalent by that corollary  to having  (1$_R$)--(3$_R$).
Condition \eqref{pmorph2} is equivalent to the combination of \eqref{pmorph} and its converse, which we already noted is equivalent to having  (1$_R$) and  (3$_R$). Likewise, \eqref{pmorph1} is equivalent to having  (1$_R$) and  (2$_R$).

Hartonas \cite{hart:ston18}, building on \cite{hart:ston97}, studied morphisms between polarity-based structures in which $\le_1$ and $\le_2$ are complete partial orders,  $X$ and $Y$ each carry a Stone space topology, the closed subsets of $X$ are the sets $[x)_1$ while the open subsets are the sets $\lam_R\{y\}$, and similarly for subsets of $Y$. A morphism
is  a pair $X\xrightarrow{\al} X',Y\xrightarrow{\be} Y'$ of continuous functions that preserve all meets in $X$ and $Y$ respectively, and are such that $\al\inv$ and $\be\inv$ satisfy the equations in \eqref{equiv12R} and \eqref{equiv13R} for all clopen stable $A$ and $B$.
\qed
\end{remark}

In the proof of the next result, and elsewhere, the notation $\al[Z]$ will be used for the \emph{image} $\{\al(z):z\in Z\}$ of a set $Z$ under function $\al$.

\begin{theorem} \label{homdual}
For any bounded morphism $\al,\be\colon P\to P'$,  the  map
 $A\mapsto \al\inv A$ gives a homomorphism $$(\al,\be)^+\colon (P')^+\to P^+$$ of\/ $\Om$-lattices.
 If $\al$ is surjective,  $(\al,\be)^+$ is injective.
  If $\al$ is injective,  $(\al,\be)^+$ is surjective.
\end{theorem}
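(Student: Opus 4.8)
The plan is to verify the three assertions in turn, using only Lemma~\ref{morph} (equivalently Corollary~\ref{equivmorph}) together with the stability of all sections of $S,T,S',T'$; the homomorphism part needs nothing beyond the commuting squares of Lemma~\ref{morph}, whereas the surjectivity claim will require a genuinely new argument.

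\emph{$(\al,\be)^+$ is a homomorphism of $\Om$-lattices.} It is well defined by Lemma~\ref{morph}(3). It preserves arbitrary meets, since $\meet\mathcal C=\bigcap\mathcal C$ in both stable set lattices and $\al\inv$ commutes with intersections; it sends $1=X'$ to $X$; and it sends $0=\lam_{R'}Y'$ to $\lam_R(\be\inv Y')=\lam_R Y=0$ by Lemma~\ref{morph}(2) (with $B=Y'$, trivially a $\le_2'$-upset). For joins, note that for $\mathcal C\sub(P')^+$ the set $\bigcup\mathcal C$ is a $\le_1'$-upset (Lemma~\ref{upsets}), so Lemma~\ref{morph}(1) gives $\be\inv(\rho_{R'}\bigcup\mathcal C)=\rho_R(\al\inv\bigcup\mathcal C)$, and then Lemma~\ref{morph}(2) applied to the stable set $\rho_{R'}\bigcup\mathcal C$ gives $\al\inv(\lam_{R'}\rho_{R'}\bigcup\mathcal C)=\lam_R\rho_R(\al\inv\bigcup\mathcal C)$; since $\al\inv$ commutes with $\bigcup$, this is preservation of arbitrary, hence in particular finite, joins.

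It remains to handle the two extra operations. For the operator I would first prove the ``$S$-version'' of Lemma~\ref{morph}: $\be\inv(f^\bullet_{S'}\vv A)=f^\bullet_S(\al\inv\vv A)$ for every $\vv A\in((P')^+)^n$. Here ``$\sub$'' uses (1$_S$) (if $\vv x\in\pi(\al\inv\vv A)$ then $\al(\vv x)\in\pi\vv A$, so $\al(\vv x)S'\be(y)$ forces $\vv xSy$), and ``$\supseteq$'' uses (2$_S$): given $\vv{x'}\in\pi\vv A$, any $\vv z$ with $\vv{x'}\le_1'\al(\vv z)$ lies in $\pi(\al\inv\vv A)$ because each $A_i$ is stable, hence a $\le_1'$-upset, so $(\al\inv[\vv{x'})_1)Sy$ holds and (2$_S$) yields $\vv{x'}S'\be(y)$. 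Since $f^\bullet_{S'}\vv A$ is stable in $Y'$ by~\eqref{f0stable}, hence a $\le_2'$-upset, Lemma~\ref{morph}(2) then gives $\al\inv(f_{S'}\vv A)=\al\inv\lam_{R'}f^\bullet_{S'}\vv A=\lam_R\be\inv f^\bullet_{S'}\vv A=\lam_R f^\bullet_S(\al\inv\vv A)=f_S(\al\inv\vv A)$. The dual operator is symmetric: $\al\inv(g_{T'}\vv A)=g_T(\al\inv\vv A)$ follows from~\eqref{GTA}, using (1$_T$) for one inclusion and, for the other, (2$_T$) together with the facts that each $\rho_{R'}A_i$ is a $\le_2'$-upset and that $\rho_R(\al\inv A_i)=\be\inv(\rho_{R'}A_i)$ by Lemma~\ref{morph}(1). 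This completes the first assertion.

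\emph{Injectivity and surjectivity.} If $\al$ is surjective then $\al[\al\inv A]=A$ for every $A\sub X'$, so $\al\inv A=\al\inv A'$ forces $A=A'$; hence $(\al,\be)^+$ is injective. Now assume $\al$ injective. Since $(\al,\be)^+$ preserves all joins, its image is a join-closed subset of $P^+$, and by~\eqref{joinmeetdense} every member of $P^+$ is the join of its generators $\ab x=\lam_R\rho_R\{x\}$; so it suffices to exhibit, for each $x\in X$, a stable $A\sub X'$ with $\al\inv A=\ab x$. The natural candidate is $A_x:=\lam_{R'}\rho_{R'}\{\al(x)\}$, for which $\al\inv A_x=\{v\in X:\al(x)\le_1'\al(v)\}$, so $\ab x=[x)_1\sub\al\inv A_x$ by isotonicity of $\al$; equivalently, for a general $B\in P^+$ one takes $A:=\lam_{R'}\rho_{R'}\al[B]$, and $B=\al\inv\al[B]\sub\al\inv A$ is immediate from injectivity. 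The reverse inclusion $\al\inv A_x\sub[x)_1$ — equivalently, that $\al$ \emph{reflects} $\le_1$ on its image, $\al(x)\le_1'\al(v)$ implies $x\le_1 v$ — is the heart of the theorem and the step I expect to be the main obstacle; it is precisely here that injectivity of $\al$ must interact with the back conditions (2$_R$) and (3$_R$). I would argue contrapositively: from $v\notin[x)_1$ pick $y\in\rho_R\{x\}$ with not $vRy$, and try $y'=\be(y)$ as a separating witness lying in $\rho_{R'}\{\al(x)\}$ but not in $\rho_{R'}\{\al(v)\}$, so that $\al(v)\notin A_x$. The half ``not $\al(v)R'\be(y)$'' is immediate from (1$_R$); the delicate half is ``$\al(x)R'\be(y)$'', for which I would use the identity that, for all $w\in X$ and $y'\in Y'$, $\al(w)R'y'$ holds iff $\be\inv[y')_2\sub\rho_R\{w\}$ (a consequence of (1$_R$), (3$_R$) and~\eqref{monR}), reducing the task to controlling the section $\be\inv[\be(y))_2$ by $\rho_R\{x\}$, and it is exactly the injectivity of $\al$ that should make this possible. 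Granting the reflection property, $\al\inv A_x=[x)_1=\ab x$ for every $x$, so the image of $(\al,\be)^+$ is all of $P^+$.
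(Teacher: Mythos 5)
Your treatment of the homomorphism clause and of the injectivity clause is correct and essentially the paper's own argument: the same appeal to Lemma \ref{morph} for well-definedness, bounds and joins, the same key identities $\be\inv f^\bullet_{S'}\vv A=f^\bullet_S(\al\inv\vv A)$ and $\al\inv g_{T'}\vv A=g_T(\al\inv\vv A)$ proved from (1$_S$)/(2$_S$) and (1$_T$)/(2$_T$), and the same set-theoretic argument for injectivity; proving preservation of arbitrary rather than just binary joins is a harmless variation.

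The surjectivity clause, however, is not proved: you reduce it to the assertion that $\al$ reflects $\le_1$ on its image and leave that step open, and this step cannot be supplied from injectivity of $\al$. By Lemma \ref{prespol}, reflection of the quasi-order by $\al$ is equivalent to the morphism preserving polarity, and an injective (even bijective) bounded morphism need not preserve $R$. Concretely, let $X=\{x,z\}$, $Y=\{y,w\}$, $R=\{(x,y),(z,w)\}$; let $X'=\{x',z'\}$, $Y'=\{y',w'\}$, $R'=\{(z',w')\}$; take $S,T,S',T'$ empty, and set $\al(x)=x'$, $\al(z)=z'$, $\be(y)=y'$, $\be(w)=w'$. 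Both quasi-orders on $P$ are discrete, so $\al,\be$ are isotone; (1$_R$) holds because the only $R'$-related pair of images is $(z',w')$ and $zRw$; (2$_R$) and (3$_R$) hold because the failures of $x'R'y'$, $x'R'w'$, $z'R'y'$ are witnessed, for (2$_R$), by $z$, $x$, $z$ respectively (using $x'\le_1' z'$) and, for (3$_R$), by $w$, $w$, $y$ respectively (using $y'\le_2' w'$); the $S$- and $T$-clauses hold vacuously since the relations are empty and $\al,\be$ are onto. Here $\{x\}=\lam_R\rho_R\{x\}\in P^+$, but $(P')^+=\{\emptyset,\{z'\},X'\}$, whose $\al$-preimages are $\emptyset$, $\{z\}$, $X$; so the dual homomorphism is not surjective even though $\al$ and $\be$ are bijections. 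Thus your candidate $A_x=\lam_{R'}\rho_{R'}\{\al(x)\}$ genuinely fails, and no argument from injectivity alone can close the gap.

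Your instinct about where the difficulty lies is in fact accurate when compared with the paper: its proof of this clause is the one-line computation $\al\inv\lam_{R'}\rho_{R'}\al[B]=\lam_R\rho_R\al\inv\al[B]$, which applies the equality half of Lemma \ref{morph}(1) to the set $\al[B]$, a set that need not be a $\le_1'$-upset; only the inclusion $B\sub\al\inv A$ comes for free, exactly as you found. The clause, and that computation, become correct under the additional hypothesis that the morphism preserves $R$ --- equivalently, by Lemma \ref{prespol}, that $\al$ reflects $\le_1$, the very property you isolated --- and such preservation is available wherever the paper uses the clause (the inclusions of an inner substructure in Theorem \ref{innersubdual}, and the relation-preserving duals of surjective homomorphisms in Theorem \ref{morphdual}). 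As written, though, your proposal leaves this part of the theorem unestablished, and with only the stated hypothesis it cannot be established.
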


\begin{proof}
The map is well defined by Lemma \ref{morph}(3). It preserves binary meets because
$\al\inv(A\cap B)=\al\inv A\cap \al\inv B$. It preserves binary joins because

$\al\inv(A\lor B)$

$=\al\inv\lam_{R'}\rho_{R'} (A\cup B)$ \quad by definition of $A\lor B$,

$=\lam_R\be\inv \rho_{R'} (A\cup B)$ \quad by Lemma \ref{morph}(2),

$=\lam_R\rho_R \al\inv  (A\cup B)$\quad  by Lemma \ref{morph}(1),

$=\lam_R\rho_R( \al\inv A\cup \al\inv  B)$ \quad by property of $\al\inv$,

$=\al\inv A\lor \al\inv  B$.

\noindent
It preserves  greatest elements as $\al\inv X'=X$, and   least elements as $\al\inv\lam_{R'}Y'=\lam_R \be\inv Y'=\lam_R Y$.

Next we show that $(\al,\be)^+$ preserves the operations $f_S$ and $f_{S'}$, first proving that for any $\vv A\in ((P')^+)^n$,
\begin{equation} \label{fcirchom}
\be\inv f^\bullet_{S'}\vv A =f^\bullet_S\al\inv\vv A.
\end{equation}
Let $y\in \be\inv f^\bullet_{S'}\vv A$, so $(\pi\vv A)S'\be(y)$. Then if $\vv x\in_\pi\al\inv\vv A$, then $\al(\vv x)\in\pi\vv A$, so
$\al(\vv x)S'\be(y)$, hence $\vv xSy$ by (1$_S$). This shows that $(\pi\al\inv\vv A)Sy$, so $y\in f^\bullet_S(\al\inv\vv A)$.

Conversely, let $y\in f^\bullet_S(\al\inv\vv A)$, so $(\pi\al\inv\vv A)Sy$. Take any $\vv{x'}\in\pi\vv A$.  Then if $\vv z\in\al\inv[\vv{x'})_1$, we have $\vv{x'}\le_1\al(\vv z)$, so $\al(\vv z)\in\pi\vv A$ as each $A_i$ is a $\le_1$-upset, hence
$\vv z\in\pi\al\inv\vv A$, and therefore $\vv zSy$.
Thus $(\al\inv[\vv{x'})_1) Sy$. Hence $\vv{x'}S'\be (y)$ by (2$_S$). Altogether this shows that $(\pi\vv A)S'\be(y)$. Therefore 
$\be(y)\in f^\bullet_{S'}\vv A$, hence $y\in \be\inv f^\bullet_{S'}\vv A$, which completes the proof of \eqref{fcirchom}. Now we reason that
\begin{align*}
f_S(\al\inv\vv A)
&=\lam_Rf^\bullet_S\al\inv\vv A
\\
&=\lam_R\be\inv f^\bullet_{S'}\vv A   \qquad \text{by \eqref{fcirchom}} 
\\
&=\al\inv\lam_{R'}f^\bullet_{S'}\vv A    \qquad \text{by Lemma \ref{morph}(2)}
\\
&= \al\inv f_{S'}\vv A,  
\end{align*}
proving that $(\al,\be)^+$ preserves the operations $f_S$ and $f_{S'}$.

Now we prove that $(\al,\be)^+$ preserves  $g_T$ and $g_{T'}$, i.e.\ for any $\vv A\in ((P')^+)^m$,
\begin{equation} \label{thetinvpresg}
\al\inv g_{T'}\vv A =g_T\al\inv\vv A.
\end{equation}
Let $x\in \al\inv g_{T'}\vv A$, so $\al(x)\in g_{T'}\vv A$, i.e.\ $\al(x)T'\pi\rho_{R'}\vv A$. Now if
$$
\vv y\in \pi\rho_{R}\al\inv\vv A =\pi\be\inv\rho_{R'}\vv A
$$
(see Lemma \ref{morph}(1)), then $\be(\vv y)\in\pi\rho_{R'}\vv A$,  hence $\al(x)T'\be(\vv y)$, and thus $xT\vv y$ by (1$_T$). This shows that $xT(\pi\rho_{R}\al\inv\vv A)$, i.e.\  $x\in g_T\al\inv\vv A$.

Conversely, assume $x\in g_T\al\inv\vv A$.   Let $\vv{y'}\in\pi\rho_{R'}\vv A$. Then if $\vv y\in\be\inv[\vv{y'})_2$, then $\vv{y'}\le_2\be(\vv y)$, so $\be(\vv y)\in\pi\rho_{R'}\vv A$. Hence for all $i<m$, 
$y_i\in\be\inv\rho_{R'}A_i=\rho_R\al\inv A_i$, and so  $\vv y\in\pi\rho_R\al\inv\vv A$. That gives $xT\vv y$ because
$x\in g_T\al\inv\vv A$. Altogether this show $xT \be\inv[\vv{y'})_2$. Hence $\al(x)T'\vv y'$ by (2$_T$). So we have established that  $\al(x)T'\pi\rho_{R'}\vv A$, which means that $\al(x)\in g_{T'}\vv A$.
Hence $x\in \al\inv g_{T'}\vv A$, completing the proof of \eqref{thetinvpresg}, and hence the proof that  $(\al,\be)^+$ is  an $\Om$-lattice  homomorphism.

The fact that  $A\mapsto \al\inv A$ is injective when $\al$ is surjective is standard set theory: surjectivity of $\al$ implies that $\al[\al\inv Z]=Z$ in general. Hence if $ \al\inv A= \al\inv B$, then $A= \al[\al\inv A]=\al[\al\inv B]=B$.

Finally,  suppose that $\al$ is injective. For any $B\in P^+$,  take $A=\lam_{R'}\rho_{R'}\al[B]$ $  \in (P')^+$. Then  by Lemma \ref{morph},
 $\al\inv A=\lam_{R}\rho_{R}\al\inv(\al[ B])=\lam_{R}\rho_{R}B$ as $\al$ is injective. But $B$ is stable, so we get that $B=\al\inv A=(\al,\be)^+A$, showing that $(\al,\be)^+$ is surjective. 
\end{proof}

Let $\opol$ be the category whose objects are the $\Om$-polarities and whose arrows are the bounded morphisms between such objects. The identity arrow $\id_P$ on each object $P$ is the pair $\id_X,\id_Y$ of identity functions on $X$ and $Y$. The composition of two arrows  
$$
\xymatrix{P \ar[r]^{\al,\be}    &P'   \ar[r]^{\al',\be'} &P'' }
$$
is given by the pair of functional compositions  $\al'\circ\al\colon X\to X''$ and  $\be'\circ\be\colon Y\to Y''$.

\begin{lemma}  \label{compmorph}
The pair $\al'\circ\al,\be'\circ\be$ is a bounded morphism from $P$ to $P''$. 
 \end{lemma}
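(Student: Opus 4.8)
The plan is to verify that the composite pair $\al'\circ\al,\be'\circ\be$ satisfies all the requirements in Definition \ref{defmorph}: first that the two component maps are isotone, and then each of the seven back-and-forth conditions (1$_R$)--(2$_T$). Isotonicity is immediate, since a composite of isotone maps between quasi-ordered sets is isotone. So the work is entirely in checking the seven conditions, and they split naturally into the ``type-1'' conditions (1$_R$), (1$_S$), (1$_T$) --- the plain implications with no inverse-image quantification --- and the ``type-2'' conditions (2$_R$), (3$_R$), (2$_S$), (2$_T$), which involve preimages of up-sets.

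For the type-1 conditions the argument is a direct two-step chase. For (1$_R$): if $(\al'\circ\al)(x)\,R''\,(\be'\circ\be)(y)$, i.e.\ $\al'(\al(x))\,R''\,\be'(\be(y))$, then applying (1$_R$) for $\al',\be'$ gives $\al(x)\,R'\,\be(y)$, and applying (1$_R$) for $\al,\be$ gives $xRy$. The conditions (1$_S$) and (1$_T$) go through verbatim with $R$ replaced by $S$ or $T$ and the single argument replaced by a tuple (noting that $(\al'\circ\al)(\vv x)=\al'(\al(\vv x))$ by the coordinatewise definition).

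The type-2 conditions are where a small wrinkle appears, because one cannot directly compose the hypotheses: the hypothesis of (2$_R$) for the composite speaks of $(\al'\circ\al)\inv[x'')_1$, and we need to relate this to sets of the form $\al\inv[\cdot)_1$ and $(\al')\inv[\cdot)_1$. The cleanest route is to use the contrapositive (``$p$-morphism'') reformulation noted in the excerpt after Definition \ref{defmorph}, together with the isotonicity of the maps. For instance, to prove (3$_R$) for the composite in the form \eqref{pmorph}: suppose not $\al'(\al(x))\,R''\,y''$. Apply \eqref{pmorph} for $\al',\be'$ to get some $y'\in Y'$ with $y''\le_2''\be'(y')$ and not $\al(x)\,R'\,y'$. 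Now apply \eqref{pmorph} for $\al,\be$ to get some $y\in Y$ with $y'\le_2'\be(y)$ and not $xRy$. Since $\be'$ is isotone, $y'\le_2'\be(y)$ gives $\be'(y')\le_2''\be'(\be(y))$, and combining with $y''\le_2''\be'(y')$ by transitivity yields $y''\le_2''(\be'\circ\be)(y)$, as required. The conditions (2$_R$), (2$_S$), (2$_T$) are handled the same way, using the corresponding contrapositive formulations (and, for (2$_S$), the tuple-version of $\le_1$ and the fact that $\al'$ is isotone on tuples coordinatewise).

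The only ``obstacle'' worth flagging is precisely that composability step in the type-2 conditions: one must not try to chain the original (back)-implications directly, but instead pass through the existential contrapositive form and invoke isotonicity of $\be'$ (resp.\ $\al'$) to push the witness inequality through the second map. Once that pattern is in place every case is a three-line chase, so I would present (3$_R$) in full as the representative case and remark that the remaining type-2 conditions are ``entirely similar.''
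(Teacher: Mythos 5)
Your proposal is correct and follows essentially the same route as the paper's proof: the type-1 conditions by a direct two-step chase, and the type-2 conditions via the contrapositive (existential-witness) form combined with isotonicity of the second map to transport the witness inequality, exactly as the paper does (it merely presents (2$_S$) rather than (3$_R$) as the representative case).
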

 \begin{proof}
 It is straightforward that the composition of isotone functions is isotone.
 For the back and forth conditions we give the details for  (1$_S$) and (2$_S$), since the others follow the same pattern.
 
 For  (1$_S$), observe that if $(\al'\circ\al)(\vv x)S''(\be'\circ\be)(y)$, then $\al(\vv x)S'\be(y)$ by (1$_S$) for $\al',\be'$, hence $\vv xSy$ by  (1$_S$) for $\al,\be$.
 
For (2$_S$), we argue contrapositively and
take   $\vv{x''}\in (X'')^n$ and $y\in Y$ such that not $\vv{x''}S''(\be'\circ\be)(y)$.
Then as $\al',\be'$ is a bounded morphism, there exists $\vv{x'}\in (X')^n$ such that $\vv{x''}\le_1\al'(\vv{x'})$ and not $\vv{x'}S'\be(y)$.
Hence as $\al,\be$ is a bounded morphism, there exists $\vv{x}\in X^n$ such that $\vv{x'}\le_1\al(\vv{x})$ and not $\vv{x}Sy$. From $\vv{x'}\le_1\al(\vv{x})$ we get $\al'(\vv{x'})\le_1\al'(\al(\vv{x}))$ \emph{as $\al'$ is isotone}, hence its action on tuples is isotone. Since $\vv{x''}\le_1\al'(\vv{x'})$ it follows that  $\vv{x''}\le_1\al'(\al(\vv{x}))$, so we have $\vv x\in(\al'\circ\al)\inv[\vv{x''})_1$ while not $\vv xSy$, hence not $(\al'\circ\al)\inv[\vv{x''})_1Sy$,
confirming (2$_S$) for the pair  $\al'\circ\al,\be'\circ\be$.

The cases of  (3$_R$) and  (2$_T$) depend on $\be'$ being isotone.
\end{proof}

Let $\onlo$ be the category whose objects are the normal $\Om$-lattices with operators and whose arrows are the algebraic homomorphisms between $\Om$-NLO's, with the composition of arrows being their functional composition and the identity arrows being the identity functions. Then with the help of Theorem \ref{homdual} and Lemma \ref{compmorph} we see that the mappings $P\mapsto P^+$ and  $\al,\be\mapsto(\al,\be)^+$ form  a contravariant functor from $\opol$ to $\onlo$.

\section{Isomorphism and inner substructures}

Every category provides a definition of isomorphism between its objects. Thus the existence of $\opol$ allows us to read off a description of  isomorphisms between $\Om$-polarities. A bounded morphism $\mu\colon P\to P'$ is called an isomorphism if there exists a bounded morphism $\mu'\colon P'\to P$ such that $\mu'\circ\mu=\id_P$ and $\mu\circ\mu'=\id_{P'}$. Then $\mu'$ is the \emph{inverse} of $\mu$. It is itself an isomorphism, with inverse $\mu$.

We say that a bounded morphism $\al,\be$ \emph{preserves polarity}, or \emph{preserves} $R$,  if $xRy$ implies $\al(x)R'\be(y)$, i.e.\ if the converse of (1$_R$) holds.  Similarly  $\al,\be$ \emph{preserves $S$} if $\vv xSy$ implies $\al(\vv x)S'\be(y)$, and 
\emph{preserves $T$} if $xT\vv y$ implies $\al(x)T'\be(\vv y)$.

The function $\al$ \emph{reflects quasi-order} if  $\al(x)\le_1' \al(z)$ implies $x\le_1 z$. Likewise $\be$ \emph{reflects quasi-order} if $\be(y)\le'_2 \be(w)$ implies $y\le_2 w$.

\begin{lemma}  \label{prespol}
For any bounded morphism $\al,\be$ the following statements are equivalent.
\begin{enumerate}[\rm(1)]
\item 
$\al,\be$ preserves  polarity.
\item
$\al$ reflects quasi-order.
\item
$\be$ reflects quasi-order.
\end{enumerate}
\end{lemma}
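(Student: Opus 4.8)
The plan is to prove a cycle of implications, say (1)$\Rightarrow$(2)$\Rightarrow$(3)$\Rightarrow$(1), though by the visible symmetry of the setup the two "reflects quasi-order" conditions will be handled by dual arguments using the relation $R$ from both sides. I would first unwind the definitions: $\al$ reflects quasi-order means $\rho_{R'}\{\al(x)\}\sub\rho_{R'}\{\al(z)\}$ implies $\rho_R\{x\}\sub\rho_R\{z\}$, and similarly for $\be$ via $\lam$; and $\al,\be$ preserves polarity means the converse of (1$_R$), i.e.\ $xRy$ implies $\al(x)R'\be(y)$.

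For (1)$\Rightarrow$(2): assume $\al,\be$ preserves $R$ and suppose $\al(x)\le_1'\al(z)$, i.e.\ $\rho_{R'}\{\al(x)\}\sub\rho_{R'}\{\al(z)\}$. To show $x\le_1 z$, take $y\in\rho_R\{x\}$, i.e.\ $xRy$; then $\al(x)R'\be(y)$ by preservation, so $\be(y)\in\rho_{R'}\{\al(x)\}\sub\rho_{R'}\{\al(z)\}$, hence $\al(z)R'\be(y)$, and then $zRy$ by (1$_R$). So $y\in\rho_R\{z\}$, giving $x\le_1 z$. The implication (1)$\Rightarrow$(3) is the mirror image: assume preservation, suppose $\be(y)\le_2'\be(w)$, take $x$ with $xRy$; then $\al(x)R'\be(y)$, so $\al(x)\in\lam_{R'}\{\be(y)\}\sub\lam_{R'}\{\be(w)\}$, hence $\al(x)R'\be(w)$, and then $xRw$ by (1$_R$); thus $y\le_2 w$.

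For the return implication, I would show (2)$\Rightarrow$(1) (the argument (3)$\Rightarrow$(1) being dual, or one can simply close the triangle via (3)$\Rightarrow$(2) or directly). Here is where I expect the only real work. Assume $\al$ reflects quasi-order and suppose $xRy$; I must produce $\al(x)R'\be(y)$. The natural move is contrapositive combined with condition (2$_R$) in its existential form \eqref{pmorph}-style reformulation: if \emph{not} $\al(x)R'\be(y)$, then since $\be(y)\le_2'\be(y)$ trivially and $\be(y)$ is the relevant target, apply the (2$_R$)-contrapositive to get some $x'\in X'$ with $\al(x)\le_1'$... — actually the cleaner route is to use the reformulation of (2$_R$): "not $x'R'\be(y)$ implies $\exists x(x'\le_1'\al(x)$ and not $xRy)$." Put $x'=\al(x)$: if not $\al(x)R'\be(y)$, then there is $x_1\in X$ with $\al(x)\le_1'\al(x_1)$ and not $x_1 R y$. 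Now $\al(x)\le_1'\al(x_1)$ together with reflection of quasi-order by $\al$ gives $x\le_1 x_1$, hence $\rho_R\{x\}\sub\rho_R\{x_1\}$; since $xRy$ we get $x_1 R y$, contradicting "not $x_1 R y$". Therefore $\al(x)R'\be(y)$, which is preservation of polarity.

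The main obstacle is getting the existential reformulation of (2$_R$) exactly right and choosing the correct instantiation ($x'=\al(x)$, target $\be(y)$); everything else is bookkeeping with the definitions of $\le_1,\le_2$ and the monotonicity facts already recorded (Lemma \ref{upsets} and condition \eqref{monR}). I would also remark that the whole lemma is the polarity analogue of the familiar fact, for Kripke bounded morphisms, that a p-morphism preserves the relation exactly when it reflects the point-ordering — consistent with the transform-by-complementation perspective of Remark \ref{explainmorph}.
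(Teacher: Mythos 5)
Your proposal is correct and matches the paper's proof essentially step for step: (1)$\Rightarrow$(2) by pushing $xRy$ forward with preservation, using $\le_1'$ and then (1$_R$) to pull back, and (2)$\Rightarrow$(1) by contraposition via the existential form of (2$_R$) with $x'=\al(x)$, then applying reflection; the (3)-direction is the dual argument with (3$_R$), just as in the paper. Your phrasing of the return implication as a contradiction rather than a direct contrapositive is an immaterial difference.
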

\begin{proof}
(1) implies (2): Assume (1).
Let $x,z\in X$ have $\al(x)\le_1'\al(z)$. Then for all $y\in Y$, if $xRy$,  then $\al(x)R'\be(y)$ by (1),  and so $\al(z)R'\be(y)$ by definition of  $\al(x)\le_1'\al(z)$, hence $zRy$ by (1$_R$). Thus $\rho_{R}\{x\}\sub \rho_{R}\{z\}$, i.e.\ $x\le_1 z$.  Altogether this proves (2).

(2) implies (1): Assume (2). We prove (1) contrapositively. Suppose that not $\al(x)R'\be(y)$. Then by (2$_R$), there exists $z\in X$ with  $\al(x)\le_1'\al(z)$ and not $zRy$. Hence $x\le_1 z$ by (2), so by definition of $\le_1$ we get not $xRy$ as required for (1).

That completes a proof that (1) is equivalent to (2). Similarly we can prove that (1) is equivalent to (3), using (3$_R$) in place of (2$_R$).
\end{proof}

\begin{theorem}  \label{wheniso}
A bounded morphism $\mu=(\al,\be)\colon P\to P'$ is an isomorphism iff 
$\al\colon X\to X'$ and $\be\colon Y\to Y'$ are bijective
and preserve the relations $R$, $S$ and $T$.
\end{theorem}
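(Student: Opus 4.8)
The plan is to prove both directions. For the forward direction, suppose $\mu=(\al,\be)$ is an isomorphism with inverse $\mu'=(\al',\be')$, so $\al'\circ\al=\id_X$, $\al\circ\al'=\id_{X'}$, and similarly for $\be,\be'$. Then $\al$ and $\be$ are bijections with $\al\inv=\al'$ and $\be\inv=\be'$. To see that $\mu$ preserves $R$: suppose $xRy$. Since $\mu$ being an isomorphism forces $\al',\be'$ to be isotone and $\al$ to reflect quasi-order (indeed $\al(x)\le_1'\al(z)$ gives $x=\al'\al(x)\le_1\al'\al(z)=z$ by isotonicity of $\al'$), Lemma \ref{prespol} tells us $\al,\be$ preserves polarity, i.e.\ $\al(x)R'\be(y)$. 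The same reasoning applied to $\mu'$ and the relations $S,T$ — or rather, applying the analogue of Lemma \ref{prespol} for $S$ and $T$ if available, otherwise arguing directly — shows preservation of $S$ and $T$. Actually the cleanest route for $S$ and $T$ is to use the conditions (1$_S$)/(2$_S$) and (1$_T$)/(2$_T$) for $\mu'$ together with the identities $\al'=\al\inv$, $\be'=\be\inv$: for instance, if $\vv xSy$, then since $\al'\al=\id$ and $\be'\be=\id$ we want $\al(\vv x)S'\be(y)$; apply (1$_S$) for $\mu'$ with $\vv{x'}=\al(\vv x)$, $y'=\be(y)$, noting $\al'(\al(\vv x))=\vv x$ and $\be'(\be(y))=y$, so that $\al(\vv x)S'\be(y)$ would give $\vv xSy$ — this is the wrong direction, so I should instead argue contrapositively using (2$_S$) for $\mu'$, or better, note that $S$-preservation by $\mu$ is equivalent to $S$-reflection by $\mu'$, which is exactly condition (1$_S$) for $\mu'$. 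Let me take the latter: condition (1$_S$) for $\mu'\colon P'\to P$ reads $\al'(\vv{x'})S\be'(y')$ implies $\vv{x'}S'y'$; setting $\vv{x'}=\al(\vv x)$ and $y'=\be(y)$ and using $\al'\al=\id_X$, $\be'\be=\id_Y$ gives: $\vv xSy$ implies $\al(\vv x)S'\be(y)$, which is exactly preservation of $S$. The same with (1$_T$) for $\mu'$ gives preservation of $T$, and (1$_R$) for $\mu'$ gives preservation of $R$.

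For the converse, suppose $\al,\be$ are bijective and preserve $R$, $S$, $T$. I must produce an inverse bounded morphism. The candidate is $\mu'=(\al\inv,\be\inv)$. First, $\al\inv$ and $\be\inv$ are isotone: since $\al$ preserves $R$ and satisfies (1$_R$), the map $\al$ reflects quasi-order by Lemma \ref{prespol}, and being bijective, $\al(x)\le_1'\al(z)$ iff $x\le_1 z$; writing $x'=\al(x)$, $z'=\al(z)$ this says $x'\le_1' z'$ iff $\al\inv(x')\le_1\al\inv(z')$, so $\al\inv$ is isotone (in fact an order-isomorphism onto its image, here all of $(X,\le_1)$). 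Similarly $\be\inv$ is isotone. Next I verify the seven back-and-forth conditions for $\mu'=(\al\inv,\be\inv)\colon P'\to P$. Each becomes routine once we observe that, because $\al$ is a bijection, $\al\inv[\vv{x})_1 = \,?$ — more useful is that for $\mu'$ the "forth/converse" direction is exactly preservation of $R$ (resp.\ $S$, $T$) by $\al,\be$, and the "back" conditions for $\mu'$ become preservation conditions read in the other direction. Concretely: (1$_R$) for $\mu'$ says $\al\inv(x')R\be\inv(y')$ implies $x'R'y'$; put $x=\al\inv(x')$, $y=\be\inv(y')$, then this reads $xRy$ implies $\al(x)R'\be(y)$, which is preservation of $R$ — check. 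Condition (2$_R$) for $\mu'$ says $(\al[\vv{x})_1)R'y'$ implies $x R \be\inv(y')$; here I use that $\al$ is an order-isomorphism so $\al[\,[x)_1\,]=[\al(x))_1'$, reducing the hypothesis to $\al(x)R'y'$, equivalently (since $\be\inv(y')$ is the unique preimage and $\al(x)R'\be(\be\inv(y'))$) $x R\be\inv(y')$ by condition (1$_R$) for the original $\mu$. Conditions (3$_R$), (1$_S$), (2$_S$), (1$_T$), (2$_T$) for $\mu'$ are dispatched the same way, each time converting an $\al\inv$-preimage of an up-set to an up-set via the order-isomorphism property, and invoking either preservation of the relevant relation or the original back-and-forth condition for $\mu$. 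Finally $\mu'\circ\mu=(\al\inv\circ\al,\be\inv\circ\be)=(\id_X,\id_Y)=\id_P$ and $\mu\circ\mu'=\id_{P'}$, so $\mu$ is an isomorphism.

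\textbf{Main obstacle.} The routine part is the seven back-and-forth verifications for $\mu'$; the one genuinely load-bearing observation is that bijectivity plus preservation of $R$ upgrades $\al$ (and $\be$) from merely isotone and quasi-order-reflecting to an \emph{order-isomorphism}, so that $\al[\,[x)_1\,]=[\al(x))_1'$ and $\al\inv[\,[x')_1\,]=[\al\inv(x'))_1$. Without this the image-of-an-up-set manipulations in conditions (2$_R$), (3$_R$), (2$_S$), (2$_T$) do not go through. I would prove this small lemma first (it follows immediately from Lemma \ref{prespol} together with bijectivity), state it once, and then the rest of both directions is bookkeeping.
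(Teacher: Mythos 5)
Your proposal is correct, and its overall architecture matches the paper's proof: both directions are handled the same way, with the forward direction extracting bijectivity from the two-sided inverse and reading preservation of $R$, $S$, $T$ off the conditions (1$_R$), (1$_S$), (1$_T$) for the inverse morphism (exactly as in the paper), and the converse taking $(\al\inv,\be\inv)$ as the candidate inverse, getting its isotonicity from Lemma \ref{prespol} plus bijectivity, and getting its back conditions from preservation. The one place you genuinely diverge is the verification of the forth conditions (2$_R$), (3$_R$), (2$_S$), (2$_T$) for $(\al\inv,\be\inv)$: the paper argues contrapositively, invoking the corresponding forth conditions of the original $\mu$ together with (1$_S$)-type conditions and isotonicity of $\al\inv$, whereas you instantiate the upset-quantified hypothesis at the single point $\al(\vv x)$ (resp.\ $\be(y)$), which lies in the relevant preimage set because $\al\inv\circ\al=\id$ and the quasi-orders are reflexive, and then apply only the (1)-conditions of $\mu$. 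Your route is valid and slightly shorter, and it has the side effect that the forth conditions of $\mu$ are never used in the converse. A small remark on your ``main obstacle'': the full order-isomorphism identity $\al[\,[x)_1\,]=[\al(x))'_1$ is true under your hypotheses but is more than you need; the single-point instantiation requires only $\al\inv\al=\id_X$, $\be\inv\be=\id_Y$ and reflexivity of $\le_1,\le_2$, so that step is lighter than you suggest (and the paper manages without it, using isotonicity of $\al\inv$ instead).
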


\begin{proof}
Suppose there is a bounded morphism $\mu'=(\al',\be')$ that is inverse to $\mu$. Then $\al'\circ\al=\id_X$ and $\al\circ\al'=\id_Y$, so $\al$ has an  inverse $\al'$ and therefore is bijective. Similarly $\be$ is bijective.
Also, if $\vv xSy$,  then $\al'(\al(\vv x))S\be'(\be(y))$, so $\al(\vv x)S'\be(y)$ by (1$_S$) for the bounded morphism $\al',\be'$. This proves that $S$ is preserved. The proof that $R$ and $T$ are preserved is similar, using (1$_R$) and (1$_T$).

Conversely, assume that $\al$ and $\be$ are bijective and preserve the relations. Then   $\al$ and $\be$ have inverses
$\al'\colon X'\to X$ and $\be'\colon Y'\to Y$. We show that $\mu'=(\al',\be')$ is a bounded morphism, which will then be an inverse for $\mu$, as required for $\mu$ to be an isomorphism.

First, as $R$ is preserved, $\al$ and $\be$ reflect quasi-orders by Lemma \ref{prespol}.
If $x',z'\in X'$ have $x'\le_1'z'$, then there exist  $x,z\in X$ with $x'=\al(x)$ and $z'=\al(z)$, so $x\le_1 z$ from reflection by
 $\al$. But this means that $\al'(x')\le_1 \al'(z')$, and shows that $\al'$ is isotone. Similarly $\be'$ is isotone.

To prove the back condition (1$_S$) for  $\al',\be'$, let $\al'(\vv{x'})S\be'(y)$.
Then as  $\al,\be$ preserves $S$ we get $\al(\al'(\vv{x'}))S'\be(\be'(y))$, i.e.\ 
$\vv{x'}S'y$ as required. The cases of (1$_R$) and  (1$_T$) for  $\al',\be'$ are similar, using the preservation of $R$ and $T$.

For a forth condition for  $\al',\be'$, we prove (2$_S$). Let $\vv x\in X^n$ and $y'\in Y'$, and suppose that
not $\vv x S\be'(y')$. We have to show that not 
$(\al')^{-1}[\vv x)_1 S'y'$, i.e.\ that there exists $\vv w\in(X')^n$ such that $\vv x\le_1\al'(\vv w)$ and not $\vv wS'y'$.
Now by (1$_S$) for  $\al,\be$, from not $\vv x S\be'(y')$ we get that  not $\al(\vv x) S'\be(\be'(y'))$, hence by (2$_S$) for  $\al,\be$  we get  not $\al^{-1}[\al(\vv x))_1 S\be'(y')$. So there exists $\vv z\in \al^{-1}[\al(\vv x))_1 $, hence $\al(\vv x)\le'_1\al(\vv z)$, such that not $\vv zS\be'(y')$.
Let $\vv w=\al(\vv z)$. Then as $\al'$ is isotone, $\al'(\al(\vv x))\le_1\al'(\al(\vv z))$, i.e.\ $\vv x\le_1\al'(\vv w)$. Also from not 
$\vv zS\be'(y')$, by (1$_S$) for  $\al,\be$ we get not $\al(\vv z)S'\be(\be'(y'))$, i.e.\ not $\vv wS'y$ as required.

That proves (2$_S$) for  $\al',\be'$. The cases of the other forth condition for  $\al',\be'$ are similar.
\end{proof}

We define  $P$ to be an \emph{inner substructure} of $P'$ if the following holds.

\begin{enumerate}[\rm(i)]
\item 
$P$ is a substructure of $P'$ in the usual sense that  $X\sub X'$, $Y\sub Y'$, and the relations of $P$ are the restrictions of those of $P'$, i.e.\ 
$R=R'\cap (X\times Y)$,
$S=S'\cap (X^n\times Y)$, and
$T=T'\cap (X\times Y^m)$.
\item
The pair of inclusion maps $X\hookrightarrow X$ and $Y\hookrightarrow Y'$ form a bounded morphism from $P$ to $P'$. \end{enumerate}
Condition (i) here entails that  (1$_R$), (1$_S$) and (1$_T$) hold when $\al$ and $\be$ are the inclusions, e.g.\  (1$_R$) asserts that $xR'y$ implies $xRy$ when $x\in X$ and $y\in Y$.    Given (i), condition (ii) is equivalent to requiring the following.

\begin{enumerate}
\item[(2$_R$)]
$\{x\in X:x'\le_1' x\}Ry$ implies $x'R' y$, \quad all $x'\in X',y\in Y$;
\item[(3$_R$)]
$xR\{y\in Y:y'\le_2' y\}$ implies $xR'y'$, \quad all $x\in X,y'\in Y'$;
\item[(2$_S$)]
$\{\vv x\in X^n:\vv{x'}\le_1' \vv x\}Sy$ implies $\vv{x'}S'y$, \quad all $\vv{x'}\in (X')^n,\ y\in Y$;
\item[(2$_T$)]
$xT\{\vv y\in Y^m:\vv{y'}\le_2'\vv y\}$ implies $xT'\vv{y'}$, \quad all $x\in X,\ \vv{y'}\in (Y')^m$.
\end{enumerate}

\begin{theorem} \label{innersubdual}
It $P$ is an inner substructure of $P'$, then the map $A\mapsto A\cap X$ is an $\Om$-lattice  homomorphism from $(P')^+$ onto $P^+$.
\end{theorem}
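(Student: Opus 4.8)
The plan is to recognise the map $A\mapsto A\cap X$ as the dual homomorphism induced by the bounded morphism of inclusions, so that the statement becomes an immediate special case of Theorem \ref{homdual}.

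First I would fix notation: let $\iota_X\colon X\hookrightarrow X'$ and $\iota_Y\colon Y\hookrightarrow Y'$ be the inclusion maps. For any $A\sub X'$ one has $\iota_X\inv A=\{x\in X:x\in A\}=A\cap X$, so the map under consideration is literally $A\mapsto\iota_X\inv A$. By clause (ii) in the definition of inner substructure, the pair $\iota_X,\iota_Y$ is a bounded morphism from $P$ to $P'$. Hence Theorem \ref{homdual} applies: the map $A\mapsto\iota_X\inv A$ is a well-defined homomorphism $(\iota_X,\iota_Y)^+\colon(P')^+\to P^+$ of $\Om$-lattices. (Well-definedness, i.e.\ that $A\cap X$ is stable in $P$ when $A$ is stable in $P'$, is the content of Lemma \ref{morph}(3) in this instance.) For surjectivity, note that $\iota_X$ is injective, so the final clause of Theorem \ref{homdual} gives that $(\iota_X,\iota_Y)^+$ is onto $P^+$.

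I do not expect any real obstacle: the theorem is exactly Theorem \ref{homdual} applied to the canonical bounded morphism that an inner substructure carries, with the only extra observation being the triviality that an inclusion map is injective. A direct verification is of course possible — checking preservation of $\land,\lor,0,1$, of $f_S$ and $g_T$, and producing for each $B\in P^+$ a stable $A=\lam_{R'}\rho_{R'}B\in(P')^+$ with $A\cap X=B$ — but this merely reruns the proof of Theorem \ref{homdual} in the special case of inclusions, so citing that theorem is the clean route. If a self-contained argument is preferred, the one genuinely content-bearing step would be the surjectivity computation $\iota_X\inv(\lam_{R'}\rho_{R'}B)=\lam_R\rho_R B=B$, which uses Lemma \ref{morph}(1),(2) together with stability of $B$.
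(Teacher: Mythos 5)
Your proposal is correct and coincides with the paper's own proof: both identify the map $A\mapsto A\cap X$ as $(\al,\be)^+$ for the inclusion pair, which is a bounded morphism by the definition of inner substructure, and then invoke Theorem \ref{homdual}, using injectivity of the inclusion $X\hookrightarrow X'$ to get surjectivity of the dual homomorphism. No gaps, and no essential difference in route.
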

\begin{proof}
If $\al$ is the inclusion $X\hookrightarrow X'$, then  $\al\inv A= A\cap X$, so by Theorem \ref{homdual}, 
 $A\mapsto A\cap X$ is a surjective homomorphism $ (P')^+\to P^+$ of  $\Om$-lattices, since $\al$ is injective.
 \end{proof}

The \emph{image}  of a bounded morphism $\al,\be:P\to P'$ is defined to be the structure
$$
\im(\al,\be)=(\al[X],\be[Y],R'',S'',T''),
$$
where the relations displayed are the restrictions of the corresponding relations of $P'$, i.e.\ 
$R''=R'\cap (\al[X]\times \be[Y])$,
$S''=S'\cap (\al[X]^n\times \be[Y])$, and
$T''=T'\cap (\al[X]\times \be[Y]^m)$.

\begin{lemma}  \label{Iminner}
\begin{enumerate}[\rm(1)]
\item
The quasi-orders $\le''_1$ and $\le''_2$ defined from $R''$ are the restrictions of the relations $\le'_1$ and $\le'_2$ to $\al[X]$ and $\be[Y]$, respectively.
\item 
All sections of $S''$ and $T''$ are stable in $\im(\al,\be)$.
\end{enumerate}
\end{lemma}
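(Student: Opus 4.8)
The plan is to prove the two parts of Lemma~\ref{Iminner} in turn, in each case reducing the claim about $\im(\al,\be)$ to facts already available for $P'$ and $P$.

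For part (1), I would compute the quasi-order $\le''_1$ directly from its definition \eqref{leone} applied to the structure $\im(\al,\be)$: for $u,v\in\al[X]$, $u\le''_1 v$ iff $\rho_{R''}\{u\}\sub\rho_{R''}\{v\}$, where $\rho_{R''}\{u\}=\{y\in\be[Y]:uR''y\}=\{y\in\be[Y]:uR'y\}=\rho_{R'}\{u\}\cap\be[Y]$, using that $R''$ is the restriction of $R'$. So $u\le''_1 v$ iff $\rho_{R'}\{u\}\cap\be[Y]\sub\rho_{R'}\{v\}\cap\be[Y]$, which is implied by $u\le'_1 v$. For the converse direction --- that the restriction of $\le'_1$ already captures $\le''_1$ --- I would use the back condition \eqref{pmorph}: write $u=\al(x)$, $v=\al(z)$, suppose $u\le''_1 v$, and suppose for contradiction $\al(x)R'y'$ fails for some $y'\in Y'$ with... actually the cleaner route is to show $\rho_{R'}\{u\}\sub\rho_{R'}\{v\}$ directly. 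Take $y'\in\rho_{R'}\{\al(x)\}$, i.e.\ $\al(x)R'y'$. By (3$_R$) in contrapositive form \eqref{pmorph} we cannot immediately conclude, so instead I would argue: if $\al(x)R'y'$ fails then there is $y$ with $y'\le'_2\be(y)$ and $\neg(xRy)$; but we want to go the other way. The sound approach: since $\rho_{R'}\{\al(x)\}$ and $\rho_{R'}\{\al(z)\}$ are stable subsets of $Y'$, by \eqref{joinmeetdense} and Lemma~\ref{upsets} they are determined by their intersections with any join-dense set; here I would invoke that $\be[Y]$ is join-dense enough via the morphism conditions --- specifically, I expect to use that for any $y'\in Y'$, the set $\{y:y'\le'_2\be(y)\}$ witnesses membership, together with (2$_R$)/(3$_R$). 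This matching of $\le''_i$ with the restriction of $\le'_i$ is the part I expect to require the most care.

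For part (2), I would show every section of $S''$ is stable in $\im(\al,\be)$ by relating it to a section of $S'$, which is stable in $P'$ by hypothesis (since $P'$ is an $\Om$-polarity). Consider a section $S''[\vv u[-]_i,y]=\{u'\in\al[X]:\vv u[u'/i]S''y\}$ for $\vv u\in\al[X]^n$, $y\in\be[Y]$. Since $S''$ restricts $S'$, this equals $S'[\vv u[-]_i,y]\cap\al[X]$. Now $S'[\vv u[-]_i,y]$ is stable in $P'$, hence a $\le'_1$-upset by Lemma~\ref{upsets}, and I must show its trace on $\al[X]$ is stable in $\im(\al,\be)$, i.e.\ equals $\lam_{R''}\rho_{R''}$ of itself. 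Stability of a subset $A\sub\al[X]$ amounts to: whenever $u'\in\al[X]$ satisfies $u'R''y$ for all $y\in\rho_{R''}A$, then $u'\in A$. I would unwind $\rho_{R''}A=\rho_{R'}A\cap\be[Y]$ and use condition (2$_S$) (in its existential contrapositive form, analogous to \eqref{pmorph}) to pull back the failure of membership. Concretely, if $u'=\al(x')\notin S'[\vv u[-]_i,y]$, i.e.\ $\neg(\vv u[\al(x')/i]S'y)$ --- wait, I need $\vv u$ to be in the image too; writing $\vv u=\al(\vv z)$ and $y=\be(w)$, condition (1$_S$) and (2$_S$) give me control. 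The key step is that (2$_S$) says precisely that if the $\al^{-1}$-preimage of an up-set intersected with an $S$-section behaves well then the image point lies in $S'$, which is the mechanism that forces the trace to be closed. I expect part (2) to go through by a routine but slightly fiddly application of (2$_S$) and (2$_T$) once the preimage bookkeeping is set up, with no genuine obstacle beyond keeping the indices straight.

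In summary: part (1) is handled by directly computing $\rho_{R''},\lam_{R''}$ as restrictions and then using the back conditions (2$_R$), (3$_R$) to show the restricted quasi-orders coincide with $\le'_1,\le'_2$ restricted; part (2) is handled by writing each $S''$- or $T''$-section as the trace of the corresponding $S'$- or $T'$-section (stable in $P'$ by assumption) and showing the trace is stable using (2$_S$), (2$_T$). The main obstacle is the converse inclusion in part~(1), ensuring the ambient quasi-order $\le'_1$ restricted to $\al[X]$ is not strictly finer than the intrinsically-defined $\le''_1$; I would resolve it by exploiting that stable sets of $P'$ are recovered from their intersection with $\be[Y]$ via the morphism conditions, essentially because (2$_R$) makes $\al[X]$ ``dense enough'' and (3$_R$) does the same for $\be[Y]$.
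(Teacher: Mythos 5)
Your part (1) is only half done, and the half you leave open is the heart of the lemma. The forward direction (the restriction of $\le'_1$ refines $\le''_1$) is fine and matches the intended computation $\rho_{R''}\{u\}=\rho_{R'}\{u\}\cap\be[Y]$. But for the converse you only gesture at ``$\be[Y]$ being join-dense enough via the morphism conditions'', which is not an argument, and your concrete attempt starts from $y'\in\rho_{R'}\{\al(x)\}$ and correctly observes that the back conditions point the wrong way for that. The missing idea is to run the whole step contrapositively and to apply the back condition to $\al(z)$, not $\al(x)$: assuming $\al(x)\le''_1\al(z)$ and not $\al(z)R'y'$, condition (3$_R$) in the form \eqref{pmorph} gives $y\in Y$ with $y'\le'_2\be(y)$ and not $zRy$; then (1$_R$) gives not $\al(z)R'\be(y)$, hence not $\al(z)R''\be(y)$; the hypothesis $\al(x)\le''_1\al(z)$ then gives not $\al(x)R''\be(y)$, hence not $\al(x)R'\be(y)$, and finally not $\al(x)R'y'$ since $y'\le'_2\be(y)$. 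This short contrapositive chain is exactly what your sketch lacks.

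In part (2) the reduction $S''[\al(\vv x)[-]_i,\be(y)]=S'[\al(\vv x)[-]_i,\be(y)]\cap\al[X]$ is correct, but the mechanism you propose, namely (2$_S$) and (2$_T$), is not what makes the trace stable, and I do not see it producing the required witness. Stability of the trace demands, for each $\al(z)$ outside it, an element of $\be[Y]$ lying in $\rho_{R''}$ of the trace but not $R''$-related to $\al(z)$. The working argument uses only the $R$-conditions: stability of the $S'$-section in $P'$ yields $y'\in\rho_{R'}S'[\al(\vv x)[-]_i,\be(y)]$ with not $\al(z)R'y'$; then (3$_R$) converts $y'$ into some $\be(w)$ with $y'\le'_2\be(w)$ and not $zRw$, so not $\al(z)R'\be(w)$ by (1$_R$), while \eqref{monR} ensures $\be(w)$ remains in $\rho_{R''}$ of the $S''$-section; sections of the form $S''[\al(\vv x),-]$ are handled the same way with (2$_R$), and the $T''$-sections are symmetric. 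If instead you apply (2$_S$) to the failure of $\al(\vv z[x'/i])S'\be(w)$, you obtain a tuple $\vv u$ with $\al(\vv z[x'/i])\le'_1\al(\vv u)$ and not $\vv uSw$, but to exploit this you would have to pull the $\le'_1$-comparison in $P'$ back to a $\le_1$-comparison in $P$, i.e.\ you would need $\al$ to reflect quasi-order, which by Lemma \ref{prespol} holds only for polarity-preserving morphisms and is not available for a general bounded morphism. So both halves of the lemma turn on the back conditions (2$_R$), (3$_R$) combined with (1$_R$) and \eqref{monR}, and neither of these key steps is actually carried out in your proposal.
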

\begin{proof}
For part (1), we show that  $\al(x)\le''_1\al(z)$ iff $\al(x)\le'_1\al(z)$ for all $x,z\in X$.
Suppose first that $\al(x)\le'_1\al(z)$. By definition this means that $\al(x)R'y'$ implies $\al(z)R'y'$ for all $y'\in Y'$ (see \eqref{leone}).
In particular, for any $y\in Y$, if $\al(x)R''\al(y)$ then $\al(x)R'\al(y)$, hence $\al(z)R'\al(y)$ and so $\al(z)R''\al(y)$.
This shows $\rho_{R''}\{\al(x)\}\sub\rho_{R''}\{\al(z)\}$, i.e.\  $\al(x)\le''_1\al(z)$.

Conversely, let $\al(x)\le''_1\al(z)$. For any $y'\in Y'$, if not $\al(z)R'y'$, then by (3$_R$) there exists $y\in Y$ such that $y'\le'_2\be(y)$ and not $zRy$. Hence not $\al(z)R'\be(y)$ by (1$_R$), and so not $\al(z)R''\be(y)$. This implies
not $\al(x)R''\be(y)$ because $\al(x)\le''_1\al(z)$. Hence not $\al(x)R'\be(y)$, so then not $\al(x)R'y'$ as 
$y'\le'_2\be(y)$. Altogether this proves 
$\rho_{R'}\{\al(x)\}\sub\rho_{R'}\{\al(z)\}$, i.e.\  $\al(x)\le'_1\al(z)$.

The proof that $\le''_2$ is the restriction of $\le'_2$ to $\be[Y]$ is similar.

For part (2),
consider a section of the form $S''[\al(\vv{x})[-]_i,\be(y)]$.
If an element $\al(z)$ of $\al[X]$ does not belong to this section, then it does not belong to 
 $S'[\al(\vv{x})[-]_i,\be(y)]$. But the latter section is stable in $P'$, so there some 
 \begin{equation}  \label{yinrho}
 y'\in\rho_{R'}S'[\al(\vv{x})[-]_i,\be(y)]
 \end{equation}
 such that not $\al(z)R'y'$.
 Then  not $zR\be\inv[y')_2$ by (3$_R$), so
  there is some $w\in Y$ such that $y'\le'_2\be(w)$ and not $zRw$.
 Hence by (1$_R$), not $\al(z)R'\be(w)$, and therefore not $\al(z)R''\be(w)$.
 Now we show that
  \begin{equation}  \label{bewinrho}
 \be(w)\in\rho_{R''}S''[\al(\vv{x})[-]_i,\be(y)].
 \end{equation}
 For if  $t\in S''[\al(\vv{x})[-]_i,\be(y)]$, then  $t\in S'[\al(\vv{x})[-]_i,\be(y)]$, so by \eqref{yinrho}, $tR'y'$.
 But $y'\le'_2\be(w)$, so then $tR'\be(w)$ by \eqref{monR}.
 Hence  $tR''\be(w)$  as $t\in \al[X]$.
  This proves \eqref{bewinrho}. Since  not $\al(z)R''\be(w)$, $\al(z)\notin\lam_{R''}\rho_{R''}S''[\al(\vv{x})[-]_i,\be(y)]$, completing the proof that $S''[\al(\vv{x})[-]_i,\be(y)]$ is stable.

The argument for sections of the form $S''[\al(\vv{x}),-]$ is similar, using  (2$_R$). The arguments for the sections of $T''$ follow the same patterns. 
\end{proof}

\begin{corollary}  \label{imageinner}
$\im(\al,\be)$ an inner substructure of $P'$ .
\end{corollary}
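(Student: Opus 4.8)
The plan is to verify the two defining clauses of an inner substructure directly for $\im(\al,\be)$ inside $P'$. Clause (i)---that $\im(\al,\be)$ is a substructure of $P'$ in the usual sense---holds by construction, since $\al[X]\sub X'$, $\be[Y]\sub Y'$, and the relations $R''$, $S''$, $T''$ were defined to be exactly the restrictions of $R'$, $S'$, $T'$ to the relevant products. That $\im(\al,\be)$ is moreover a genuine $\Om$-polarity, i.e.\ that all sections of $S''$ and $T''$ are stable, is precisely Lemma \ref{Iminner}(2), so nothing new is needed there. It remains to establish clause (ii): that the inclusion maps $\al[X]\hookrightarrow X'$ and $\be[Y]\hookrightarrow Y'$ form a bounded morphism from $\im(\al,\be)$ to $P'$.

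These inclusions are isotone---indeed order-embeddings---because, by Lemma \ref{Iminner}(1), the quasi-orders $\le''_1$ and $\le''_2$ of $\im(\al,\be)$ are just the restrictions of $\le'_1$ and $\le'_2$. So, since clause (i) holds, it is enough to check the four conditions (2$_R$), (3$_R$), (2$_S$) and (2$_T$) in the specialised forms listed after the definition of inner substructure. Each is handled by the same short argument, which I illustrate with (3$_R$): given $\al(x)\in\al[X]$ and $y'\in Y'$ such that $\al(x)R''w$ for every $w\in\be[Y]$ with $y'\le'_2 w$, I want $\al(x)R'y'$. For any $y\in\be\inv[y')_2$ we have $y'\le'_2\be(y)$, so the hypothesis gives $\al(x)R''\be(y)$, hence $\al(x)R'\be(y)$ (as $R''\sub R'$), hence $xRy$ by (1$_R$) for $\al,\be$; thus $xR\be\inv[y')_2$, and (3$_R$) for $\al,\be$ yields $\al(x)R'y'$.

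The conditions (2$_R$), (2$_S$) and (2$_T$) follow by the identical scheme: start from a hypothesis phrased with $R''$, $S''$ or $T''$; restrict down to $R'$, $S'$ or $T'$ and pull back along (1$_R$), (1$_S$) or (1$_T$) to the corresponding relation of $P$, using that $\al\inv[\vv{z'})_1$ (respectively $\be\inv[\vv{y'})_2$) is exactly the set of tuples whose $\al$-image lies above $\vv{z'}$ under $\le'_1$ (respectively whose $\be$-image lies above $\vv{y'}$ under $\le'_2$); then apply the matching back condition (2$_R$), (2$_S$) or (2$_T$) for $\al,\be$ to obtain the conclusion. The only point needing a little care is bookkeeping: the parameters $z'$, $y'$ in these conditions range over all of $X'$, $Y'$ rather than over $\al[X]$, $\be[Y]$, so $\le'_i$ must not be confused with $\le''_i$ there; but the back conditions for $\al,\be$ are stated with precisely that scope, so no extra work is required. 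There is no real obstacle here---the corollary is essentially a repackaging of Lemma \ref{Iminner} together with the definitions.
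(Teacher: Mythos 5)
Your proposal is correct and follows essentially the same route as the paper: Lemma \ref{Iminner} supplies stability of the sections and the identification of $\le''_i$ with the restriction of $\le'_i$, the back conditions are immediate from the substructure definition, and the forward conditions are checked by combining the (1)-conditions for $\al,\be$ with its forward conditions and the fact that $R''$, $S''$, $T''$ are restrictions. The only cosmetic difference is that you argue the forward conditions directly (illustrating with (3$_R$)) while the paper argues contrapositively (illustrating with (2$_R$)); the underlying steps are the same.
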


\begin{proof}
Part (2) of the Lemma confirms that $\im(\al,\be)$ is an $\Om$-polarity.
By part (1), the inclusions  $(\al[X],\le''_1)\hookrightarrow (X',\le'_1)$   and
$(\be[Y],\le''_2)\hookrightarrow(Y',{\le'_2)}$ are isotone.   We  show that they satisfy the back and forth properties of Definition \ref{defmorph},  so they form a bounded morphism, making $\im(\al,\be)$ an inner substructure of $P'$ by definition. Since $\im(\al,\be)$ is defined to be a substructure of $P'$, the inclusions  do satisfy the back conditions, as already noted.

 For the forward conditions, we consider (2$_R$). This requires that for any $x'\in X'$ and $w\in\be[Y]$, if not $x'R'w$ then there exists $z\in\al[X]$ with $x'\le'_1 z$ and not $z R'' w$. Now $w=\be(y)$ for some $y\in Y$, and $\al$ and $\be$ satisfy 
(2$_R$), so if  not $x'R'\be(y)$ then there exists $x\in X$ with $x'\le'_1 \al(x)$ and not $x Ry$. Hence not $\al(x) R'\be(y)$
by (1$_R$), and so not $\al(x) R''\be(y)$. Thus putting $z=\al(x)$ fulfills our requirement for  (2$_R$). The proofs that the inclusions satisfy the other forward conditions are similar.
\end{proof}
Thus we have the general fact that the image of a bounded morphism is an inner substructure of its codomain.

\begin{theorem} \label{converseback}
If  $\al$ and $\be$ are injective and preserve the relations $R$, $S$ and $T$, then they give an isomorphism between $P$ and $\im(\al,\be)$. 
\end{theorem}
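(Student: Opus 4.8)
The plan is to exhibit the co-restriction of $(\al,\be)$ to $\im(\al,\be)$ as an isomorphism in the sense of Theorem \ref{wheniso}, and then invoke that theorem. Write $P''=\im(\al,\be)=(\al[X],\be[Y],R'',S'',T'')$, which is an $\Om$-polarity and an inner substructure of $P'$ by Corollary \ref{imageinner}. Let $\al_0\colon X\to \al[X]$ and $\be_0\colon Y\to \be[Y]$ be $\al$ and $\be$ with codomains restricted to their images, so $\al_0$ and $\be_0$ are surjective by construction and injective by hypothesis, hence bijective. So the first task is to check that $(\al_0,\be_0)$ is a bounded morphism from $P$ to $P''$; the second is to check that it preserves $R''$, $S''$ and $T''$; then Theorem \ref{wheniso} applies directly.

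First I would verify that $\al_0$ is isotone from $(X,\le_1)$ to $(\al[X],\le''_1)$ and $\be_0$ from $(Y,\le_2)$ to $(\be[Y],\le''_2)$. Since $\al$ is isotone into $(X',\le'_1)$, we have $x\le_1 z \Rightarrow \al(x)\le'_1\al(z)$, and by Lemma \ref{Iminner}(1) the relation $\le''_1$ on $\al[X]$ is exactly the restriction of $\le'_1$, so $\al(x)\le''_1\al(z)$; likewise for $\be_0$. For the back conditions (1$_R$), (1$_S$), (1$_T$) for $(\al_0,\be_0)\colon P\to P''$: these assert, e.g., that $\al(x)R''\be(y)$ implies $xRy$. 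But $\al(x)R''\be(y)$ means $\al(x)R'\be(y)$ (as $R''$ is a restriction of $R'$), which implies $xRy$ by (1$_R$) for the original morphism; identically for $S$ and $T$. For the forth conditions (2$_R$), (3$_R$), (2$_S$), (2$_T$): take (2$_R$) as the template. Arguing contrapositively, suppose $x'\in\al[X]$ and $y\in Y$ with not $x'R''\be(y)$. Write $x'=\al(x_1)$; then not $\al(x_1)R'\be(y)$ (a restriction cannot give more than $R'$ on $\al[X]\times\be[Y]$, and since $\al(x_1)R'\be(y)$ would force $\al(x_1)R''\be(y)$, we indeed have not $\al(x_1)R'\be(y)$). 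Now apply (2$_R$) for $(\al,\be)$: there is $x\in X$ with $x'\le'_1\al(x)$ and not $xRy$. Since $x'$ and $\al(x)$ both lie in $\al[X]$, Lemma \ref{Iminner}(1) gives $x'\le''_1\al(x)$, and $\al(x)\in\al[X]$, so $x\in\al_0\inv[x')_1$ and not $xRy$, which is exactly the (contrapositive of the) back-and-forth requirement for (2$_R$) in $P''$. The remaining forth conditions follow the same template, using (3$_R$), (2$_S$), (2$_T$) for $(\al,\be)$ together with Lemma \ref{Iminner}(1). Hence $(\al_0,\be_0)$ is a bounded morphism.

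Next, preservation of the relations. Suppose $xRy$. Since $\al,\be$ preserve $R$, we get $\al(x)R'\be(y)$; but $\al(x)\in\al[X]$ and $\be(y)\in\be[Y]$ and $R''$ is the restriction of $R'$ to $\al[X]\times\be[Y]$, so $\al(x)R''\be(y)$, i.e.\ $\al_0(x)R''\be_0(y)$. The arguments for $S$ and $T$ are identical, using preservation of $S$ and $T$ and the fact that $S''$, $T''$ are restrictions of $S'$, $T'$ to the appropriate product sets. Therefore $\al_0,\be_0$ are bijective and preserve $R''$, $S''$, $T''$, so by Theorem \ref{wheniso} the bounded morphism $(\al_0,\be_0)\colon P\to\im(\al,\be)$ is an isomorphism.

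The only mildly delicate point — and the one I would state carefully rather than wave at — is the interaction between the restricted relations and the restricted quasi-orders: one must know that $\le''_1$ is genuinely the restriction of $\le'_1$ (not merely contained in it) before identifying $\al_0\inv[x')_1$ computed inside $P''$ with the appropriate preimage, and before transferring preservation statements. That is precisely the content of Lemma \ref{Iminner}(1), so there is no real obstacle; everything else is bookkeeping against the definitions of substructure and of $\im(\al,\be)$.
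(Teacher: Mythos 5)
Your proposal is correct and follows essentially the same route as the paper: co-restrict $\al,\be$ to $\im(\al,\be)$, use Lemma \ref{Iminner}(1) to transfer isotonicity (and the quasi-order computations) to the restricted structure, and then apply Theorem \ref{wheniso} to the resulting bijective, relation-preserving bounded morphism. The paper's proof is simply terser, leaving the back-and-forth verifications implicit, whereas you spell them out; the substance is the same.
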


\begin{proof}
By Lemma \ref{Iminner}(1), as $\al$ is isotone as a map from
$(X,\le_1)$ to $(X',\le'_1)$, it is isotone as a map from
$(X,\le_1)$ to $(\al[X],\le''_1)$.
Likewise $\be$ is isotone as a map from
$(Y,\le_2)$ to $(\be[Y],\le''_2)$, so  $\al,\be$ acts as a bounded morphism from $P$ to $\im(\al,\be)$.

If  $\al$ and $\be$ are injective, then  they are bijective as maps to $\al[X]$ and $\be[Y]$, so if they
 preserve the relations as well then Theorem \ref{wheniso} ensures that they give an isomorphism from $P$ to $\im(\al,\be)$.
 \end{proof}

\section{Canonical extensions}

Lattice homomorphisms are assumed to preserve the  bounds 0 and 1, as well as $\land$ and $\lor$. An injective  homomorphism (\emph{mono}morphism) may be denoted by $\mono$, and a surjective  one (\emph{epi}morphism) by  
$\epi$. 
A function $\thet\colon\LL\to\M$ between lattices is called a \emph{lattice embedding}  if it is a lattice monomorphism. A lattice embedding is always  \emph{order invariant}, i.e.\ has 
$a\leq b$ iff $\thet a\leq \thet b$.

First we review the definition of a canonical extension of  a lattice, as given in \cite{gehr:boun01}.
A \emph{completion} of lattice $\LL$ is a pair $(\thet,\C)$ with $\C$ a complete lattice and $\thet\colon\LL\mono\C$  a lattice embedding.
An element of $\C$ is  \emph{open} if it is a join of elements from the $\thet$-image $\thet[\LL]$ of $\LL$ and \emph{closed} if it is a meet of elements from $\thet[\LL]$. Members of $\thet[\LL]$ are both open and closed. The set of open elements of the completion is denoted $O(\C)$, and the  set of closed elements is  $K(\C)$.

A completion $(\thet,\C)$ of $\LL$ is \emph{dense} if $K(\C)$ is join-dense and $O(\C)$ is meet-dense in $\C$, i.e.\ if  every member of $\C$ is both a join of closed elements and a meet of open elements. 
It is \emph{compact} if for any set $Z$ of closed elements and any set $W$ of open elements such that $\meet Z\leq\join W$,  there are finite sets $Z'\sub Z$ and $W'\sub W$ with $\meet Z'\leq\join W'$. An equivalent formulation of this condition that we will  use  (in Theorem \ref{thcano})  is that for any subsets $Z$ and $W$ of $\LL$ such that $\meet \thet[Z]\leq\join \thet[W]$ there are finite sets $Z'\sub Z$ and $W'\sub W$ with $\meet Z'\leq\join W'$.

A \emph{canonical extension} of  lattice $\LL$ is a completion $(\thet_\LL,\LL^\sg)$  of $\LL$ which is dense and compact. Any two such completions are isomorphic by a unique isomorphism commuting with the embeddings of $\LL$. This legitimises talk of ``the'' canonical extension, and the assignment of a name $\LL^\sg$ to it.

A function $f\colon\LL\to \M$ between lattices can be lifted it to a function $\LL^\sg\to\M^\sg$ between their canonical extensions in two ways, using the embeddings $\thet_\LL\colon \LL\mono\LL^\sg$ and  $\thet_\M\colon \M\mono\M^\sg$ to form the \emph{lower} canonical extension $f\lo$ and \emph{upper} canonical extension $f\up$ of $f$ (in \cite{gehr:boun01} these are denoted $f^\sg$ and $f^\pi$ respectively). 
Let $\mathbb{I}$  be the set of all intervals of the form $\{x:p\leq x\leq q\}$ in $\LL^\sg$ with $p\in K(\LL^\sg)$ and $q\in O(\LL^\sg)$.
Then for $x\in \LL^\sg$,
\begin{align}
f\lo x &=\join\big\{\meet \{\thet_\M(fa):a\in\LL\text{ and }\thet_\LL(a)\in E\}:x\in E\in\mathbb{I}\big\}. \label{floE}
\\
f\up x &=\meet\big\{\join\{\thet_\M(fa):a\in\LL\text{ and }\thet_\LL(a)\in E\}:x\in E\in\mathbb{I}\big\}.  \label{fupE}
\end{align}
The functions $f\lo$ and $f\up$  have $f\lo x\leq f\up x$. They both extend $f$ in the sense that the diagram
$$
\newdir{ >}{{}*!/-8pt/@{>}}
\xymatrix{
\LL  \ar@{ >->}[d]_{\thet_\LL} \ar[r]^f  &\M \ar@{ >->}[d]^{\thet_\M} 
\\
{\LL^\sg} \ar[r]^{h} &{\M^\sg}   }
$$
commutes when $h= f\lo$ or $h= f\up$.
For \emph{isotone} $f$,  these extensions can also  be specified as follows \cite[Lemma 4.3]{gehr:boun01}.
\begin{align}
f\lo p &= \meet \{\thet_\M(fa) :a\in\LL\text{ and }p\leq \thet_\LL(a)\},   \quad \text{ for all }  p\in K(\LL^\sg). \label{floclosed}
\\
f\lo x &=\join\{ f\lo p:p\in K(\LL^\sg) \text{ and }p\leq x\}, \quad \text{ for all } x\in\LL^\sg.
\\
f\up q &= \join \{\thet_\M(fa) :a\in\LL\text{ and }q\geq \thet_\LL(a)\}   \quad \text{ for all }  q\in O(\LL^\sg).\label{fupopen}
\\
f\up x &=\meet\{ f\up q:q\in O(\LL^\sg) \text{ and }q\geq x\}, \quad \text{ for all } x\in\LL^\sg.   \label{fup}
\end{align}

If $f\colon\LL^n\to\LL$ is an $n$-ary operation on $\LL$, then $f\lo$ and $f\up$ are maps from $(\LL^n)^\sg$ to $\LL^\sg$. But $(\LL^n)^\sg$ can be identified with $(\LL^\sg)^n$, since the natural embedding $\LL^n\rightarrowtail (\LL^\sg)^n$ is dense and compact, so this allows $f\lo$ and $f\up$ to be regarded as an $n$-ary operations on $\LL^\sg$.
Moreover it is readily seen that $K((\LL^\sg)^n)=( K(\LL^\sg))^n$, i.e.\ a closed element of $(\LL^\sg)^n$ is an $n$-tuple of closed elements of  $\LL^\sg$, and likewise $O((\LL^\sg)^n)=( O(\LL^\sg))^n$. This will be important below, where we apply the lower canonical extension to operators on $\LL$, and the upper extension to dual operators.

For any $\Om$-lattice $\LL$, based on a lattice $\LL_0$, we
define a canonical extension $\LL^\sg$ for $\LL$ by taking the canonical extension of $\LL_0$, applying the lower extension to operations denoted by members of $\Lam$, and 
the upper extension to operations denoted by members of $\Ups$, to form
\begin{equation}  \label{Lsigom}
\LL^\sg = (\LL_0^\sg, \{(\f^\LL)\lo:\f\in\Lam\}\cup\{(\g^\LL)\up:\g\in\Ups\}).   
\end{equation}
It is shown in \cite[Section 4]{gehr:boun01} (see also  \cite[2.2.14]{vosm:logi10}) that if $\LL$ is an NLO, then so is $\LL^\sg$, with each $(\f^\LL)\lo$ being a complete normal operator, and each $(\g^\LL)\up$ being a complete normal dual operator.

\section{Canonical structures}  \label{seccanstr}

The existence of $\LL^\sg$ was established in \cite{gehr:boun01} by taking  it to be the stable set lattice of a certain polarity between filters and ideals of $\LL$, with the additional operations of $\LL$ being extended to $\LL^\sg$ by  the abstract lattice-theoretic definitions \eqref{floE} and \eqref{fupE}. We will now see that if  $\LL$ is an $\Om$-NLO, then the polarity can be expanded to an $\Om$-polarity, which we call the \emph{canonical structure} of $\LL$, and whose stable set $\Om$-lattice, as in \eqref{PplusOm}, is a canonical extension of $\LL$.

Recall that we are assuming that $\Lam=\{\f\}$ and $\Ups=\{\g\}$. In what follows we  write the $n$-ary operator $\f^\LL$ just as $f$ and the $m$-ary dual operator $\g^\LL$ just as $g$. Let $\fF_\LL$ be the set of non-empty filters of $\LL$ and  $\fI_\LL$ be the set of non-empty ideals of $\LL$.
For $F\in\fF_\LL$ and $D\in\fI_\LL$, write $F\lap D$ to mean that $F$ and $D$ intersect, i.e.\ $F\cap D\ne\emptyset$. 
Define the \emph{canonical structure of} $\LL$ to be  the structure
$$
\LL_+=(\fF_\LL,\fI_\LL, \lap,  S_\LL,T_\LL,),
$$
where, for $\vv F\in \fF_\LL{}^n$ and $D\in\fI_\LL$,
$$
\vv F S_\LL D \quad\text{iff \quad there exists $\vv a\in_\pi\vv F$ with } f(\vv a)\in D;
$$
while, for $F\in \fF_\LL$ and $\vv D\in\fI_\LL{}^m$,
$$
 F T_\LL \vv D \quad\text{iff \quad there exists $\vv a\in_\pi\vv D$ with } g(\vv a)\in F.
$$

\begin{lemma}  \label{lesub}
In\/ $\LL_+$, if $F,F'\in\fF_\LL$ then $F\le_1 F'$ iff $F\sub F'$; and if $D,D'\in\fI_\LL$ then $D\le_2 D'$ iff $D\sub D'$.
\end{lemma}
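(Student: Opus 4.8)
The plan is to unwind the definitions of $\le_1$ and $\le_2$ on the canonical structure $\LL_+$ and show that, for filters and ideals, these quasi-orders coincide with set inclusion. Recall from \eqref{leone} that $F\le_1 F'$ iff $\rho_\lap\{F\}\sub\rho_\lap\{F'\}$, where the underlying relation of $\LL_+$ is $\lap$; and $\rho_\lap\{F\}=\{D\in\fI_\LL: F\lap D\}$, the set of ideals meeting $F$. So $F\le_1 F'$ says: every ideal meeting $F$ also meets $F'$. Similarly $D\le_2 D'$ says: every filter meeting $D$ also meets $D'$.

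First I would prove the easy direction of both equivalences. If $F\sub F'$ and an ideal $D$ meets $F$, then it meets $F'$ as well, so $F\le_1 F'$; dually, $D\sub D'$ gives $D\le_2 D'$. So the content is in the converse direction. For that I would argue contrapositively: suppose $F\not\sub F'$, so there is some $a\in F\setminus F'$. I want to produce an ideal $D$ that meets $F$ but not $F'$. The natural candidate is $D={\downarrow}a=\{b\in\LL: b\leq a\}$, the principal ideal generated by $a$; this is a non-empty ideal, and it meets $F$ since $a\in F\cap D$. It remains to check $D$ does not meet $F'$: if some $b\in F'$ had $b\leq a$, then $a\in F'$ since filters are upward closed, contradicting $a\notin F'$. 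Hence $F\not\le_1 F'$. The ideal case is completely dual: if $D\not\sub D'$, pick $a\in D\setminus D'$, take the principal filter ${\uparrow}a=\{b:a\leq b\}$, which meets $D$ (at $a$) but not $D'$ (else $a\in D'$ by downward closure of ideals).

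I do not expect any real obstacle here. The only things to be careful about are that $\LL$ has a least and greatest element (so principal filters and ideals are genuinely non-empty, in particular ${\downarrow}a$ and ${\uparrow}a$ are non-empty filters/ideals as required for membership in $\fI_\LL$ and $\fF_\LL$), and that filters are upsets while ideals are downsets — this asymmetry is exactly what makes the two cases dual rather than identical. The additional relations $S_\LL$ and $T_\LL$ of the canonical structure play no role, since $\le_1$ and $\le_2$ are defined purely from the polarity relation $\lap$. I would write the proof out in four short movements: (i) both $\subseteq$-to-$\le$ directions; (ii) the contrapositive for $\le_1$ via ${\downarrow}a$; (iii) the contrapositive for $\le_2$ via ${\uparrow}a$; noting at the end that this lemma is what justifies the remark in Section~\ref{secbmorph} that the abstract quasi-orders of Definition~\ref{defmorph} specialise to $\subseteq$ on canonical structures.
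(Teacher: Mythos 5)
Your proposal is correct and follows essentially the same route as the paper: the easy direction from inclusion, and the converse via the principal ideal ${\downarrow}a$ (dually, the principal filter ${\uparrow}a$) generated by a witness element, the only cosmetic difference being that you phrase the converse contrapositively while the paper argues directly that every $a\in F$ lands in $F'$. (A minor remark: non-emptiness of ${\downarrow}a$ and ${\uparrow}a$ needs no appeal to the bounds of $\LL$, since each contains $a$ itself.)
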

\begin{proof}
If $F\sub F'$, then $F\lap D$ implies $F'\lap D$, so $\rho_\lap\{F\}\sub\rho_\lap\{F'\}$, i.e.\ $F\le_1 F'$ by \eqref{leone}. Conversely, suppose  $\rho_\lap\{F\}\sub\rho_\lap\{F'\}$. If $a\in F$, let $D$ be the ideal $\{b\in\LL:b\leq a\}$ generated by $a$. Then $a\in F\cap D$, so $D\in \rho_\lap\{F\}$, hence there exists $b\in F'\cap D$, so $b\leq a$ and thus $a\in F'$. This shows $F\sub F'$. 
The case of $\le_2$ is the order-dual of this argument.
\end{proof}
Assume from now that $\LL$ is an $\Om$-NLO.

\begin{lemma}
All sections of the relations $ S_\LL$ and $T_\LL$ are stable in $\LL_+$, making $\LL_+$ an $\Om$-polarity.
\end{lemma}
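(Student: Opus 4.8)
The plan is to show stability of each section of $S_\LL$ and $T_\LL$ separately, by exploiting the duality between $\lap$ and the lattice structure via filters and ideals. By the remark preceding Lemma \ref{upsets}, to prove a subset $C$ of $\fF_\LL$ is stable it suffices to show $\lam_\lap\rho_\lap C\sub C$; that is, if a filter $F$ lies outside $C$, I need to produce an ideal $D$ with $CRD$ (meaning $F'\lap D$ for all $F'\in C$) but not $F\lap D$. The natural candidate is to take $D$ generated by the relevant lattice element coming from the definition of the section.

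First I would treat a section of $S_\LL$ that varies the $i$-th $X$-coordinate, namely $S_\LL[\vv F[-]_i,D]$ for fixed $\vv F\in\fF_\LL{}^n$ and $D\in\fI_\LL$. By definition this is the set of filters $F'$ such that $(F_0,\dots,F',\dots,F_{n-1})S_\LL D$, i.e.\ there exist $a_j\in F_j$ ($j\ne i$) and $a_i\in F'$ with $f(\vv a)\in D$. Suppose $F$ is \emph{not} in this section. I claim the right ``witnessing'' ideal is $D^\ast=\{b\in\LL: b\leq f(a_0,\dots,a_{i-1},c,a_{i+1},\dots,a_{n-1})\text{ for some }a_j\in F_j\ (j\ne i),\ c\in F\}$ — or more precisely the ideal generated by $\{f(\vv a): a_j\in F_j\ (j\ne i),\ a_i\in F\}$. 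Because $f$ is an operator (preserves finite joins in each coordinate) and each $F_j$, $F$ is a filter, this set is directed upward under $f$ and hence generates an ideal $D^\ast$ cleanly; moreover $F\lap D^\ast$ would force some $f(\vv a)\in F\cap D^\ast$... actually I must be careful about the direction: I want $F'\lap D^\ast$ for every $F'$ in the section, and $\neg(F\lap D^\ast)$. The first holds essentially by construction once one checks that membership in the section pushes an element of $F'$ into $D^\ast$; for the second I use that $F$ not being in the section means no choice of $a_i\in F$, $a_j\in F_j$ gives $f(\vv a)\in D$ — but I need disjointness of $F$ from $D^\ast$, which requires that $D^\ast$ was built from $D$, so the correct ideal is $D$ itself intersected/augmented appropriately. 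The cleanest route: show the section equals $\{F':\exists a_i\in F'\ (a_i\in D')\}=\lam_\lap\{D'\}$-type set where $D'$ is the ideal $\{c: f(a_0,\dots,c,\dots,a_{n-1})\in D\text{ for suitable }a_j\in F_j\}$, noting $D'$ is an ideal since $f$ preserves joins in coordinate $i$ and $D$ is an ideal and each $F_j$ is a filter. Then the section is $\lam_\lap\{D'\}=\{F': F'\cap D'\ne\emptyset\}$, which is visibly of the form $\lam_\lap B$ and hence stable. I would handle the section $S_\LL[\vv F,-]\sub\fI_\LL$ dually — though note this section lives in $Y$, so I need it to be a $\rho_\lap A$-set; but for Lemma \ref{fScomplop} only the $X$-varying sections need stability, so I may only need to verify the $Y$-section is stable for the claim ``$\LL_+$ is an $\Om$-polarity,'' which does require \emph{all} sections stable. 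For the $Y$-section $S_\LL[\vv F,-]=\{D: \exists\vv a\in_\pi\vv F,\ f(\vv a)\in D\}=\rho_\lap\{F^\ast\}$ where $F^\ast$ is the filter generated by $\{f(\vv a):\vv a\in_\pi\vv F\}$ (a filter because $f$ is isotone and the $F_i$ are filters, so this set is downward directed), hence stable.

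For $T_\LL$ the argument is the order-dual. A section $T_\LL[F,\vv D[-]_i]\sub\fI_\LL$ is, by the definition $FT_\LL\vv D\iff\exists\vv a\in_\pi\vv D$ with $g(\vv a)\in F$, equal to $\{D': \exists c\in D',\ g(a_0,\dots,c,\dots,a_{m-1})\in F\text{ for suitable }a_j\in D_j\}=\rho_\lap\{F'\}$ where $F'=\{c: \exists a_j\in D_j\ (j\ne i),\ g(\vv a)\in F\}$, which is a filter because $g$ preserves meets in coordinate $i$ (dual operator) and $F$ is a filter and each $D_j$ is an ideal; so the section is $\rho_\lap\{F'\}$, stable. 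The section $T_\LL[-,\vv D]\sub\fF_\LL$ equals $\lam_\lap\{D^\ast\}$ where $D^\ast$ is the ideal generated by $\{g(\vv a):\vv a\in_\pi\vv D\}$ (downward directed since $g$ is isotone and the $D_i$ are ideals), hence stable. Once all four kinds of sections are shown stable, the structure $\LL_+$ satisfies the definition of an $\Om$-polarity, completing the proof.

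The main obstacle I anticipate is verifying that the auxiliary sets introduced (the ideal $D'$, the filters $F^\ast$, $F'$, the ideal $D^\ast$) genuinely are ideals/filters — this is precisely where normality and the join- or meet-preservation of $f$ and $g$ in each coordinate are used, and one must be careful that $f$ preserving \emph{binary} joins (plus the filter being closed under finite meets) is exactly enough to get upward-directedness of the generating set, with normality (preservation of $0$) ensuring the generated ideal is non-empty in the right way and avoiding degenerate cases. The bookkeeping with the fixed coordinates $a_j\in F_j$ and the existential quantifier over them is the only genuinely fiddly point; everything else is a routine unwinding of definitions plus the observation that sets of the form $\lam_\lap B$ and $\rho_\lap A$ are automatically stable.
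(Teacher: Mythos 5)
Your proposal is correct, and it reaches the same lattice computations as the paper but packages them differently. The paper proves stability of each section $C$ directly by verifying $\lam_\lap\rho_\lap C\sub C$ (or $\rho_\lap\lam_\lap C\sub C$): it introduces the filter generated by $\{f(\vv a):\vv a\in_\pi\vv F\}$, or the ideal generated by $E_0=\{b:\exists\vv a\in_\pi\vv F[\LL/i],\ f(\vv a[b/i])\in D\}$, shows this generator witnesses membership in $\lam_\lap C$ (resp.\ $\rho_\lap C$), and then does the finite meet/join bookkeeping (meeting the witness coordinates, pushing $\join Z$ through $f$ using normality) to pull an intersecting filter back into $C$. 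You instead observe that these auxiliary sets are already a filter base, an ideal, etc.\ --- for the coordinate-varying sections, your $D'$ is literally the paper's $E_0$, and closure under binary joins follows by reconciling the two witness tuples via meets in the filters $F_j$ and then using binary join preservation of $f$ in coordinate $i$; for the sections $S_\LL[\vv F,-]$ and $T_\LL[-,\vv D]$ the generating sets are directed, so the generated filter/ideal is just the up/down closure. This lets you write each section exactly as $\lam_\lap\{D'\}$ or $\rho_\lap\{F'\}$ for a single ideal or filter, and stability is then immediate from the general fact that sets of the form $\lam_R B$ and $\rho_R A$ are stable, with no $\lam\rho$-closure computation needed. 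The content (isotonicity for the non-varying sections, binary join/meet preservation for the varying ones, normality to avoid the degenerate case --- in your version, to make $D'$ and $F'$ non-empty) is identical; your version is arguably a slightly cleaner decomposition. Two small points: your opening paragraph hesitates over the direction of the witnessing ideal before settling on the right formulation, and in the $T_\LL[-,\vv D]$ case the generating set $\{g(\vv a):\vv a\in_\pi\vv D\}$ is \emph{upward} directed (you wrote ``downward''), which is what makes the generated ideal its downward closure; neither affects the correctness of the argument.
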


\begin{proof}
First consider a section of the form $S_\LL[\vv F,-]$ with $\vv F\in\fF_\LL{}^n$.
Let $G$ be the filter of $\LL$ generated by $\{f(\vv a):\vv a\in_\pi\vv F\}$.
For any $D\in S_\LL[\vv F,-]$ there exists $\vv a\in_\pi\vv F$ such that $f(\vv a)\in D$. But then $f(\vv a)\in G$, so $G\lap D$. This proves $G\in \lam_\lap S_\LL[\vv F,-]$.
Now take any $D\in\rho_\lap \lam_\lap S_\LL[\vv F,-]$. Then $G\lap D$ so there exists an $d\in G$ with $d\in D$. By definition of $G$ there is a finite subset $Z$ of $\pi\vv F$ such that
\begin{equation}  \label{meetfZld}
\meet \{f(\vv a):\vv a\in  Z\} \leq d.
\end{equation}
For all $i<n$ put $b_i=\meet\{a_i:\vv a\in Z\}\in F_i$, and let $\vv b=(\vc{b}{n})\in_\pi\vv F$.
Then for all $\vv a\in Z$ we have $\vv b\leq\vv a$, so $f(\vv b)\leq f(\vv a)$ as operators are isotone. Hence by \eqref{meetfZld} $f(\vv b)\leq d\in D$, so $f(\vv b)\in D$. As $\vv b\in_\pi\vv F$, this gives $\vv F S_\LL D$, so $D\in S_\LL[\vv F,-]$.
We have now shown that $\rho_\lap \lam_\lap S_\LL[\vv F,-]\sub S_\LL[\vv F,-]$, which is enough to conclude that $S_\LL[\vv F,-]$ is stable.

Next take a section of the form $S_\LL[\vv F[-]_i,D]$ with $D\in\fI_\LL$. Let $E$ be the ideal generated by the set
$$
E_0=\{b\in\LL: \exists \vv a\in_\pi\vv F[\LL/i](f(\vv a[b/i])\in D)\}.
$$
For any $G\in S_\LL[\vv F[-]_i,D]$  we have  $\vv F[G/i]S_\LL D$ so
there exist $\vv a\in \vv F[\LL/i]$ and $b\in G$ such that $\vv a[b/i]\in\vv F[G/i]$ and $f(\vv a[b/i])\in D$.
Then $b\in E_0$, so $G\lap E$. This proves that $E\in\rho_\lap S_\LL[\vv F[-]_i,D]$.

Now let $G\in\lam_\lap \rho_\lap S_\LL[\vv F[-]_i,D]$.
Then there exists  $d\in G\cap E$. By definition of $E$ there is a finite set $Z\sub E_0$ with $d\leq \join Z$. Hence $\join Z\in G$ as $d$ belongs to the filter $G$. For each $b\in Z$ there exists 
$ \vv{a_b}\in_\pi\vv F[\LL/i]$ such that $f(\vv{a_b}[b/i])\in D$.
Now for all $j<n$, put $c_j=\meet\{(a_b)_j : b\in Z\}$. Then $c_j\in F_j$ provided $j\ne i$. Let $\vv c=(\vc{c}{n})$.
Then for all $b\in Z$, $\vv c[b/i]\leq \vv{a_b}[b/i]$, so  $f(\vv c[b/i])\leq f(\vv{a_b}[b/i])\in D$, hence 
$f(\vv c[b/i])\in D$.  \emph{Since $f$ is a normal operator}, we conclude that
$$ \textstyle
f(\vv c[\join Z/i]) =\join\{f(\vv c[b/i]):b\in Z\}\in D.
$$
But $\join Z\in G$, so  $ \vv c[\join Z/i]\in\vv F[G/i]$,  implying that $\vv F[G/i] S_\LL D$ and thus
$G\in S_\LL[\vv F[-]_i,D]$. This proves that 
$\lam_\lap \rho_\lap S_\LL[\vv F[-]_i,D] \sub  S_\LL[\vv F[-]_i,D]$, hence $ S_\LL[\vv F[-]_i,D] $ is stable.

The arguments for the stability sections of $T_\LL$ are essentially the duals of those for $S_\LL$, but we go through the details, first for a section of the form $T_\LL[-,\vv D]$ with $\vv D\in \fI{}^m$.

Let $E$ be the ideal  generated by $\{g(\vv a):\vv a\in_\pi\vv D\}$.
For any $G\in T_\LL[-,\vv D]$ there exists $\vv a\in_\pi\vv D$ such that $g(\vv a)\in G$. But then $g(\vv a)\in E$, so $G\lap E$. This proves $E\in \rho_\lap T_\LL[-,\vv D]$.
Now take any $G\in \lam_\lap \rho_\lap T_\LL[-,\vv D]$. Then $G\lap E$ so there exists an $b\in G$ with $b\in E$. By definition of $E$ there is a finite subset $Z$ of $\pi\vv D$ such that
\begin{equation}  \label{joinZlb}
b\leq \join  \{g(\vv a):\vv a\in  Z\}.
\end{equation}
For all $i<m$ put $d_i=\join\{a_i:\vv a\in Z\}\in D_i$, and let $\vv d=(\vc{d}{n})\in_\pi\vv D$.
Then for all $\vv a\in Z$ we have $\vv a\leq\vv d$, so $g(\vv a)\leq g(\vv d)$ as dual operators are isotone. Hence by \eqref{joinZlb} 
$b\leq g _\LL(\vv d)$. Then $g _\LL(\vv d)\in F$ as $b\in F$.
 As $\vv d\in_\pi\vv D$, this gives $ F T_\LL \vv D$, so $F\in T_\LL[-,\vv D]$.
We have now shown that   $ \lam_\lap \rho_\lap T_\LL[-,\vv D]\sub T_\LL[-,\vv D]$, hence $T_\LL[-,\vv D]$ is stable.

Finally we consider a section of the form   $T_\LL[F,\vv D[-]_i]$ with $F\in\fF_\LL$.  Let $G$ be the filter generated by the set
$$
G_0=\{d\in\LL: \exists \vv a\in_\pi\vv D[\LL/i](g(\vv a[d/i])\in F)\}.
$$
For any $E\in T_\LL[F,\vv D[-]_i]$  we have  $FT_\LL \vv D[E/i]$ so
there exist $\vv a\in \vv D[\LL/i]$ and $d\in E$ such that $\vv a[d/i]\in\vv D[E/i]$ and $g(\vv a[d/i])\in F$.
Then $d\in G_0$, so $G\lap E$.
This proves that $G\in\lam_\lap T_\LL[F,\vv D[-]_i]$.
Now let $E\in \rho_\lap \lam_\lap T_\LL[F,\vv D[-]_i]$.
Then there exists  $b\in G\cap E$. By definition of $G$ there is a finite set $Z\sub G_0$ with $\meet Z\leq b$. Hence $\meet Z\in E$ as $b$ belongs to the ideal $E$.
For each $d\in Z$ there exists 
$ \vv{a_d}\in_\pi\vv D[\LL/i]$ such that $g(\vv{a_d}[d/i])\in F$.
Now for all $j<m$, put $c_j=\join\{(a_d)_j : d\in Z\}$. Then $c_j\in D_j$ provided $j\ne i$. Let $\vv c=(\vc{c}{m})$.
Then for all $d\in Z$, $ \vv{a_d}[d/i]\leq \vv c[d/i]$, so  $g( \vv{a_d}[d/i])\leq g(\vv c[d/i])$,
hence  $g(\vv c[d/i])\in F$.  \emph{Since $g$ is a normal dual operator}, we conclude that
$$ \textstyle
g(\vv c[\meet Z/i]) =\meet\{g(\vv c[d/i]):d\in Z\}\in F.
$$
But $\meet Z\in E$, so  $ \vv c[\meet Z/i]\in\vv D[E/i]$,  implying that $F T_\LL \vv D[E/i]$ and thus
$E\in T_\LL[F,\vv D[-]_i]$. This proves that 
$ \rho_\lap \lam_\lap T_\LL[F,\vv D[-]_i] \sub  T_\LL[F,\vv D[-]_i]$, hence $ T_\LL[F,\vv D[-]_i] $ is stable.
\end{proof}

\begin{theorem}\label{thcano}
 $(\LL_+)^+$ is a canonical extension of\/ $\LL$ as a lattice.
\end{theorem}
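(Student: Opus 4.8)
The statement concerns $\LL$ only as a lattice, so the relations $S_\LL$ and $T_\LL$ play no role and I would work entirely with the polarity $P_0=(\fF_\LL,\fI_\LL,\lap)$ underlying $\LL_+$, showing that the canonical map $\thet_\LL\colon\LL\to P_0^+$ witnesses a dense compact completion. For $a\in\LL$ write $\hat a=\{F\in\fF_\LL:a\in F\}$, and let $D_a=\{b:b\leq a\}$, $[a)=\{b:a\leq b\}$ be the principal ideal and filter. Since a filter contains $a$ exactly when it meets $D_a$, we have $\hat a=\lam_\lap\{D_a\}$, which is stable, so $\thet_\LL(a):=\hat a$ maps into $P_0^+$. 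It preserves $\land$ and $1$ at once (filters are closed under meets and contain $1$) and sends $0$ to the least stable set $\lam_\lap\fI_\LL=\{\LL\}$, since $\LL$ is both the only filter containing $0$ and the only filter meeting the ideal $\{0\}$. For joins: $\hat a\cup\hat b\sub\widehat{a\lor b}$ forces $\hat a\lor\hat b\sub\widehat{a\lor b}$ because the right side is stable; conversely, if $a\lor b\in F$ then any $D\in\rho_\lap(\hat a\cup\hat b)$ meets $[a)$ and $[b)$, hence contains $a$ and $b$, hence $a\lor b$, so $F$ meets $D$ and $F\in\lam_\lap\rho_\lap(\hat a\cup\hat b)=\hat a\lor\hat b$. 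Finally $\thet_\LL$ is injective, indeed order-reflecting, because $a\not\leq b$ gives $[a)\in\hat a\setminus\hat b$. Hence $\thet_\LL$ is a lattice embedding and, $P_0^+$ being complete, $(\thet_\LL,P_0^+)$ is a completion of $\LL$.

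For density I would first identify enough closed and open elements. Using principal ideals as above, $\ab F=\lam_\lap\rho_\lap\{F\}=\{G\in\fF_\LL:F\sub G\}=\bigcap_{a\in F}\hat a$ for any $F\in\fF_\LL$, so each $\ab F$ is a meet of members of $\thet_\LL[\LL]$ and hence closed; and $\lam_\lap\{D\}=\{F:F\lap D\}=\bigcup_{a\in D}\hat a$ is already stable, so $\lam_\lap\{D\}=\join\{\hat a:a\in D\}$ is open for any $D\in\fI_\LL$. Density is then read straight off \eqref{joinmeetdense}: every stable $A$ equals $\join_{F\in A}\ab F$ (a join of closed elements) and equals $\bigcap_{D\in\rho_\lap A}\lam_\lap\{D\}$ (a meet of open elements).

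Compactness I would prove in the form flagged earlier for exactly this use: given $Z,W\sub\LL$ with $\meet\thet_\LL[Z]\leq\join\thet_\LL[W]$, I must find finite $Z'\sub Z$, $W'\sub W$ with $\meet Z'\leq\join W'$. Let $F_Z$ be the filter generated by $Z$ and $D_W$ the ideal generated by $W$ (so $F_Z=\{1\}$ if $Z=\emptyset$ and $D_W=\{0\}$ if $W=\emptyset$). Then $\meet\thet_\LL[Z]=\bigcap_{a\in Z}\hat a=\ab{F_Z}$, the least stable set containing $F_Z$, so the hypothesis holds iff $F_Z\in\join\thet_\LL[W]=\lam_\lap\rho_\lap\bigcup_{b\in W}\hat b$. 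An ideal $D$ lies in $\rho_\lap\bigcup_{b\in W}\hat b$ iff it meets every filter containing some $b\in W$; testing against the principal filters $[b)$ shows this amounts to $W\sub D$, i.e.\ $D_W\sub D$. So $F_Z\in\join\thet_\LL[W]$ iff $F_Z$ meets every ideal extending $D_W$, which (taking $D=D_W$) is equivalent to $F_Z\lap D_W$. Any $c\in F_Z\cap D_W$ then satisfies $a_1\land\cdots\land a_k\leq c\leq b_1\lor\cdots\lor b_l$ for some $a_i\in Z$, $b_j\in W$ (with the evident reading $\meet\emptyset=1$, $\join\emptyset=0$ in the degenerate cases), so $Z'=\{a_1,\dots,a_k\}$ and $W'=\{b_1,\dots,b_l\}$ do the job.

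The main obstacle, such as it is, will be the two points where one cannot work with filters alone: verifying that $\thet_\LL$ preserves joins, and the computation of $\join\thet_\LL[W]$ in the compactness step. Both must pass through ideals precisely because filters of a general lattice need not be prime; once that is handled, the rest is bookkeeping with $\lam_\lap$, $\rho_\lap$ and the identities supplied by \eqref{joinmeetdense}, Lemma~\ref{upsets} and Lemma~\ref{lesub}.
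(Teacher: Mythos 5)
Your proposal is correct and follows essentially the same route as the paper's proof: the embedding $a\mapsto\{F\in\fF_\LL:a\in F\}$, closed elements of the form $\lam_\lap\rho_\lap\{F\}=\bigcap_{a\in F}\thet_\LL(a)$ and open elements $\lam_\lap\{D\}=\join_{a\in D}\thet_\LL(a)$ combined with \eqref{joinmeetdense} for density, and compactness via the filter generated by $Z$ meeting the ideal generated by $W$. Your local variations (principal ideals in place of the sets $\fI_a$, a direct check of join preservation) are only cosmetic.
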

\begin{proof}
This was shown by Gehrke and Harding \cite{gehr:boun01}, building on work on lattice representations by  Urquhart \cite{urqu:topo78} and Hartung \cite{hart:topo92}. We give details of a proof here, as we make  further use of its ideas.

For $a\in\LL$, define $\fF_a=\{F\in\fF_\LL:a\in F\}$ and $\fI_a=\{I\in \fI_\LL:a\in I\}$. 
Then $\fF_a=\lam_{\lap} \fI_a$ and $\fI_a=\rho_{\lap} \fF_a$. For the first equation, if $F\in\fF_a$, then any $D\in \fI_a$ has $a\in F\cap D$ so $F\lap D$, hence $F\in \lam_{\lap} \fI_a$. For the converse, let $(a]=\{b\in\LL:b\leq a\}\in\fI_a$ be the ideal generated by $a$. Then any $F\in \lam_{\lap} \fI_a$ has $F\lap(a]$, hence $a\in F$ and so $F\in\fF_a$. The second equation is similar.

Thus $\fF_a=\lam_{\lap} \fI_a=\lam_{\lap} \rho_{\lap} \fF_a$, showing $\fF_a$ is stable.
The map 
$\thet(a)=\fF_a$ gives a lattice embedding of $\LL$ into the stable set lattice of the polarity $(\fF_\LL,\fI_\LL,\lap)$, hence into $ (\LL_+)^+$. To show this, observe that $\fF_{a\land b}=\fF_a\cap\fF_b$, so $\thet$ preserves binary meets. Since $\fF_1=\fF_\LL$ and $\fF_0=\lam_{\lap} \fI_0=\lam_{\lap} \fI_\LL$, it preserves the universal bounds.
Also 
$\fI_{a\lor b}=\fI_a\cap\fI_b$,  so $\fF_{a\lor b}=\lam_\lap(\fI_a\cap\fI_b)=\lam_\lap(\rho_\lap\fF_a\cap\rho_\lap\fF_b)
=\lam_\lap\rho_\lap(\fF_a\cup\fF_b)=\fF_a\lor\fF_b$, so $\thet$ preserves binary joins. Moreover, if $a\nleqslant b$, then the filter $[a)=\{b'\in\LL:a\leq b'\}$ belongs to $\thet(a)\setminus\thet(b)$, so $\thet$ is an order-embedding.

Next we show that $\thet\colon\LL\to (\LL_+)^+$ is a compact and dense embedding. For compactness it suffices to take any subsets $Z,W$ of $\LL$ such that $\bigcap \thet[Z]\sub\join \thet[W]$ in $(\LL_+)^+$, and show that there are finite sets $Z'\sub Z$ and $W'\sub W$ with $\meet Z'\leq\join W'$ \cite[2.4]{gehr:boun01}. Given such $Z$ and $W$, let $F$ be the filter of $\LL$ generated by $Z$. Then $F\in \bigcap \thet[Z]\sub\join \thet[W]$, so
$$\textstyle
F\in \join \thet[W] =\lam_\lap\rho_\lap\bigcup\thet[W] = \lam_\lap\bigcap_{b\in W}\rho_\lap \fF_b
= \lam_\lap\bigcap_{b\in W}\fI_b.
$$
Now if $D$ is the ideal generated by $W$, then $D\in\bigcap_{b\in W}\fI_b$, so then $F\lap D$. This means there
is some $a\in F\cap D$ and so by the nature of generated filters and ideals there are finite sets $Z'\sub Z$ and $W'\sub W$ with $\meet Z'\leq a\leq \join W'$, hence $\meet Z'\leq\join W'$ as required.

Density of $\thet$  requires that each member of $(\LL_+)^+$ is both a join of meets and a meet of joins of members of 
$\thet[\LL]$. We use the fact \eqref{joinmeetdense} that in any polarity, a member of $P^+$ is both a join of elements of the form $\lam\rho\{x\}$ and a meet of elements of the form $\lam\{y\}$.

If $F\in \fF_\LL$ and $D\in\fI_\LL$, then  $F\lap D$ 
iff   $\exists a\in F(D\in \fI_a)$. So
$\rho_\lap\{F\}=\bigcup_{a\in F}\fI_a$.
Hence
$\lam_\lap\rho_\lap\{F\}=\lam_\lap\bigcup_{a\in F}\fI_a=\bigcap_{a\in F}\lam_\lap\fI_a=\bigcap_{a\in F}\fF_a
=\bigcap_{a\in F}\thet(a)$. Combining this with \eqref{joinmeetdense} gives that if $A\in (\LL_+)^+$, then
\begin{equation}\label{joinmeet}
A=\join\nolimits_{F\in A}\lam_\lap\rho_\lap\{F\}=\join\nolimits_{F\in A}\bigcap\nolimits_{a\in F}\thet(a).
\end{equation}
Also, $F\lap D$ 
iff   $\exists a\in D(F\in \fF_a)$, so $\lam_\lap\{D\}=\bigcup_{a\in D}\thet(a)$. Since $\lam_\lap\{D\}$ is stable, this union is a join. Together with  \eqref{joinmeetdense} we then get that if $A\in (\LL_+)^+$, then
\begin{equation}\label{meetjoin}
A=\bigcap\nolimits_{D\in\rho_\lap A}\join\nolimits_{a\in D}\thet(a).
\end{equation}
\eqref{joinmeet} and \eqref{meetjoin} show that $\thet$ is dense as required.
\end{proof}

\begin{theorem}  \label{Lplusplus}
 $(\LL_+)^+$ is a canonical extension of\/ $\LL$ as an $\Om$-lattice.
\end{theorem}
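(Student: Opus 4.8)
The plan is to add to Theorem \ref{thcano} the one further fact the $\Om$-lattice structure requires: that, with the lattice reduct of $(\LL_+)^+$ taken as $\LL_0^\sg$ and $\thet(a)=\fF_a$ as the canonical embedding, the operation $f_{S_\LL}$ of \eqref{PplusOm} coincides with the lower canonical extension $f\lo$ of $f:=\f^\LL$, and $g_{T_\LL}$ coincides with the upper canonical extension $g\up$ of $g:=\g^\LL$. By \eqref{Lsigom} this is exactly the assertion that $(\LL_+)^+$ is a canonical extension of $\LL$ as an $\Om$-lattice. I would begin by recalling from the proof of Theorem \ref{thcano} that the closed element attached to a filter $F\in\fF_\LL$ is $\ab F=\lam_\lap\rho_\lap\{F\}=\bigcap_{a\in F}\thet(a)$ and the open element attached to an ideal $D\in\fI_\LL$ is $\lam_\lap\{D\}=\bigcup_{a\in D}\thet(a)$.

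Since $\LL$ is an $\Om$-NLO, $f\lo$ is a complete normal operator and $g\up$ a complete normal dual operator on $\LL_0^\sg$, by \cite[Section 4]{gehr:boun01}. Recall from Section 3 that any complete normal operator $h$ on a stable set lattice coincides with $f_{S_h}$, where $\vv F\,S_h\,D$ iff $D\in\rho_\lap(h\,\ab{\vv F})$, and dually that any complete normal dual operator $h'$ coincides with $g_{T_{h'}}$, where $F\,T_{h'}\,\vv D$ iff $F\in h'(\lam_\lap\{D_0\},\dots,\lam_\lap\{D_{m-1}\})$. Applying these to $h=f\lo$ and $h'=g\up$, it now suffices to establish the two relational identities $S_{f\lo}=S_\LL$ and $T_{g\up}=T_\LL$: they give $f_{S_\LL}=f_{S_{f\lo}}=f\lo$ and $g_{T_\LL}=g_{T_{g\up}}=g\up$, which together with Theorem \ref{thcano} completes the proof.

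For $S_{f\lo}=S_\LL$ I would argue as follows. Each $\ab{F_i}$ is closed, and $\ab{F_i}\leq\thet(a)$ iff $a\in F_i$ (both directions are immediate once one notes that $F_i$ itself lies in $\bigcap_{b\in F_i}\thet(b)=\ab{F_i}$). So, by formula \eqref{floclosed} applied in the product $(\LL_0^\sg)^n$ whose closed elements are the tuples of closed elements, $f\lo\,\ab{\vv F}=\bigcap\{\thet(f(\vv a)):\vv a\in_\pi\vv F\}$. Hence $D\in\rho_\lap\big(f\lo\,\ab{\vv F}\big)$ iff $D$ meets every filter containing $\{f(\vv a):\vv a\in_\pi\vv F\}$, equivalently iff $D$ meets the filter $G_0$ generated by that set (enlarging a filter preserves its meeting a fixed ideal). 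Finally, $G_0\cap D\neq\emptyset$ forces a finite $Z\sub\pi\vv F$ with $\meet\{f(\vv a):\vv a\in Z\}\in D$; then $b_i:=\meet\{a_i:\vv a\in Z\}\in F_i$, and as $f$ is isotone $f(\vv b)\leq\meet\{f(\vv a):\vv a\in Z\}\in D$ with $\vv b\in_\pi\vv F$, so $\vv F\,S_\LL\,D$; the converse holds since $f(\vv a)\in G_0$ for every $\vv a\in_\pi\vv F$. The identity $T_{g\up}=T_\LL$ is the order-dual computation, built from \eqref{fupopen}, the equivalence $\thet(a)\leq\lam_\lap\{D\}\iff a\in D$, and isotonicity of $g$.

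The main obstacle is precisely these two relational identities, i.e.\ pushing the abstract interval description \eqref{floE}--\eqref{fupE} of the canonical extension of an operator down to the filters and ideals of $\LL$; the decisive step is the equivalence ``a finite meet of values $f(\vv a)$ with $\vv a\in_\pi\vv F$ lies in the ideal $D$'' $\Longleftrightarrow$ ``$f(\vv b)\in D$ for a single $\vv b\in_\pi\vv F$'', which rests on isotonicity of the operators. Normality of $f$ and $g$ enters only indirectly here: it is what makes $\LL_+$ an $\Om$-polarity at all, and what guarantees, via \cite{gehr:boun01}, that $f\lo$ and $g\up$ are complete normal, so that the recovery facts of Section 3 apply to them. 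Everything else is routine bookkeeping with the closure operators $\lam_\lap\rho_\lap$ and $\rho_\lap\lam_\lap$ and the De Morgan laws \eqref{demorg}.
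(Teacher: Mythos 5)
Your proposal is correct; it establishes the same two identities $f_{S_\LL}=f\lo$ and $g_{T_\LL}=g\up$ that the paper's proof does, but by a genuinely different decomposition. The paper works with the operations directly: it first verifies that $\thet$ preserves $f$ and $g$ (\eqref{thetpresf}, \eqref{thetpresg}), then shows $f\lo$ and $f_{S_\LL}$ agree on closed tuples via \eqref{floZbigcap}, and finally lifts the agreement to arbitrary elements using complete normality of both operations (and dually for $g\up$ on open tuples). You instead invoke the Section 3 recovery theorems---a complete normal operator $h$ on a stable set lattice equals $f_{S_h}$ with $\vv F S_h D$ iff $D\in\rho_\lap(h\ab{\vv F})$, and dually for complete normal dual operators---applied to $h=f\lo$ and $h'=g\up$, which is legitimate since by \cite{gehr:boun01} these are complete normal on $\LL_0^\sg$; this reduces the theorem to the relational identities $S_{f\lo}=S_\LL$ and $T_{g\up}=T_\LL$. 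The computational core is then the same as the paper's closed-element case (compute $f\lo\ab{\vv F}=\bigcap\{\thet(f(\vv a)):\vv a\in_\pi\vv F\}$ from \eqref{floclosed}, pass to the filter generated by $\{f(\vv a):\vv a\in_\pi\vv F\}$, and use finite meets plus isotonicity of $f$), but your packaging buys two simplifications: the ``extend from closed elements to all elements'' step is absorbed into the recovery theorem, and the preservation equations \eqref{thetpresf}--\eqref{thetpresg} need not be proved separately, since they follow from $f_{S_\LL}=f\lo$ and $g_{T_\LL}=g\up$ together with the stated fact that $f\lo$ and $g\up$ extend $f$ and $g$ along the embedding. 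In a full write-up you should still spell out the order-dual computation for $T_{g\up}=T_\LL$ (unwinding the join $\join\{\thet(g(\vv a)):\vv a\in_\pi\vv D\}$ through $\lam_\lap\rho_\lap$ and the ideal generated by $\{g(\vv a):\vv a\in_\pi\vv D\}$) and dispose of the degenerate case of an empty finite set $Z$, but neither is a gap.
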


\begin{proof}
We  need to supplement Theorem \ref{thcano} by showing that its embedding $\thet$ preserves $f$ and $g$, and that the operations $f_{S_\LL}$ and $g_{T_\LL}$ on $(\LL_+)^+$ are the canonical extensions of $f$ and $g$ as defined in \eqref{floclosed}--\eqref{fup}. 
We will denote $(\LL_+)^+$ more briefly as $\LL^\sg$, as justified by Theorem \ref{thcano}.

Preservation of $f$ requires that for any $\vv a\in\LL^n$,
\begin{equation}\label{thetpresf}
\thet(f(\vv a) )= f_{S_\LL}(\thet(\vv a)),
\end{equation}
i.e.\ $\fF_{f(\vv a)}=\lam_\lap  f^\bullet_{S_\LL}(\thet(\vv a))$, where $f^\bullet_{S_\LL}(\thet(\vv a))=
\{D\in\fI_\LL:(\pi\thet(\vv a))S_\LL D\}$. 
It is enough to show that
\begin{equation}  \label{fSLI}
f^\bullet_{S_\LL}(\thet(\vv a)) = \fI_{f(\vv a)},
\end{equation}
since that implies that  $\lam_\lap f^\bullet_{S_\LL}(\thet(\vv a)) = \lam_\lap\fI_{f(\vv a)}=\fF_{f(\vv a)}$, as desired.
Note that
$$
\pi\thet(\vv a)=\fF_{a_0}\times\cdots\times \fF_{a_{n-1}}=\{\vv F\in\fF_\LL{}^n:\vv a\in_\pi\vv F\},
$$
so \eqref{fSLI} amounts to the claim that for any $D\in\fI_\LL$,
\begin{equation}\label{truthforf}
f(\vv a)\in D \quad\text{iff}\quad \forall \vv F\in\fF_\LL{}^n \big(\vv a\in_\pi\vv F \text{ implies } \vv FS_\LL\vv D\big).
\end{equation}
If $f(\vv a)\in D $, then if  $\vv a\in_\pi\vv F$ it is immediate that  $\vv FS_\LL D$ by definition of $S_\LL$.
Conversely, for each $i<n$, let $F_i=[a_i)$, the filter of $\LL$ generated by $a_i$, so that $F_i\in\fF_{a_i}$, and let  $\vv F=(\vc{F}{n})$. Then $\vv a\in_\pi\vv F$, so if the right side of \eqref{truthforf} holds then $\vv FS_\LL D$, hence there exists some $\vv b\in_\pi\vv F$ such that $f(\vv b)\in D$.
Then $\vv a\leq\vv b$. But any operator is isotone, so this implies $f(\vv a)\leq f(\vv b)\in D$, hence $f(\vv a)\in D$. That completes the proof of \eqref{truthforf}, and therefore of \eqref{thetpresf}.

Next we dualise this argument to show that for any $\vv a\in\LL^m$,
\begin{equation}\label{thetpresg}
\thet(g(\vv a) )= g_{T_\LL}(\thet(\vv a)),
\end{equation}
i.e.\ $\fF_{g(\vv a)}=\{F\in\fF_\LL:  F T_\LL\pi\rho_\lap\thet(\vv a)\}$. Note that
$$
\pi\rho_\lap\thet(\vv a)=\rho_\lap\fF_{a_0}\times\cdots\times\rho_\lap\fF_{a_{m-1}}=\{\vv D\in\fI_\LL{}^m:\vv a\in_\pi\vv D\},
$$
so what we want for \eqref{thetpresg} is that for any $F\in\fF_\LL$, 
\begin{equation}\label{truthforg}
g(\vv a)\in F \quad\text{iff}\quad \forall \vv D\in\fI_\LL{}^m \big(\vv a\in_\pi\vv D \text{ implies } FT_\LL\vv D\big).
\end{equation}
Now if $g(\vv a)\in F$, then if $\vv a\in_\pi\vv D$ it is immediate that $ FT_\LL\vv D$ by definition of $T_\LL$.
For the converse, for each $i<m$ let $D_i=(a_i]$, the ideal of $\LL$ generated by $a_i$, and put $\vv D=(\vc{D}{m})$.
Then $\vv a\in_\pi\vv D$, so if the right side of \eqref{truthforg} holds then $fT_\LL\vv D$, hence there exists $\vv b\in_\pi\vv D$ with $g(\vv b)\in F$. Then $g(\vv b)\leq g(\vv a)$ as dual operators are isotone, hence $g(\vv a)\in F$ as $F$ is a filter. This proves \eqref{truthforg} and hence \eqref{thetpresg}.

Now we want to show that the lower canonical extension of $f$ on $\LL^\sg=(\LL_+)^+$ is just  $f_{S_\LL}$, i.e.\
$f\lo(\vv Z)=f_{S_\LL}(\vv Z)$ for all $\vv Z\in (\LL^\sg)^n$. First we show this for the case that $\vv Z$ is any \emph{closed} element of $(\LL^\sg)^n$, which means that for all $i<n$, $Z_i$ is a closed element of $\LL^\sg$, hence there is some subset $A_i\sub\LL$ such that
\begin{equation}\textstyle  \label{Ziclosed}
Z_i=\bigcap\thet[A_i]=\bigcap\{\fF_a:a\in A_i\}=\{F\in\fF_\LL:A_i\sub F\}.
\end{equation}
Since $\vv Z\in K((\LL^\sg)^n)$,  \eqref{floclosed} gives
\begin{align}
f\lo(\vv Z)&=\bigcap\{\thet(f(\vv a)):\vv a\in\LL^n \text{ and }\vv Z\sub_\pi\thet(\vv a)\}.  \label{floZbigcap}
\end{align}
Now $f_{S_\LL}$ is isotone, being an operator, so if $\vv Z\sub_\pi\thet(\vv a)$, then
$$
f_{S_\LL}(\vv Z)\sub f_{S_\LL}\thet(\vv a)=\thet(f(\vv a))
$$
by \eqref{thetpresf}. Hence $f_{S_\LL}(\vv Z)\sub \bigcap\{\thet(f(\vv a)):\vv Z\sub_\pi\thet(\vv a)\}=f\lo(\vv Z)$
by \eqref{floZbigcap}.

For the converse inclusion, suppose $G\in  f\lo(\vv Z)$.
For all $i<n$, let $F_i$ be the filter generated by $A_i$. Then $F_i\in Z_i$ by \eqref{Ziclosed}, so the tuple $\vv F=(\vc{F}{n})$ belongs to $\pi\vv Z$. Thus for any $D\in f^\bullet_{S_\LL}(\vv Z)$ we have $\vv FS_{\LL}D$ and so there is some $\vv a\in_\pi\vv F$ such that $f(\vv a)\in D$.
But $Z_i=\{F\in\fF_\LL:F_i\sub F\}$,  by \eqref{Ziclosed} and the definition of $F_i$, so as $a_i\in F_i$, we get $a_i\in\bigcap Z_i$, so $Z_i\sub\fF_{a_i}=\thet(a_i)$. 
Thus $\vv Z\sub_\pi\thet(\vv a)$, so by \eqref{floZbigcap} $f\lo(\vv Z)\sub \thet(f(\vv a))$.
As $G\in  f\lo(\vv Z)$, this gives $f(\vv a)\in G$. But  $f(\vv a)\in D$, so $G\lap D$. Altogether this proves that
$G\in\lam_\lap f^\bullet_{S_\LL}(\vv Z)=f_{S_\LL}(\vv Z)$.

That completes the proof that  $f\lo$ and $f_{S_\LL}$ agree on all closed members of $ (\LL^\sg)^n$. To show that they agree on an arbitrary $\vv Z\in  (\LL^\sg)^n$ we use the fact that each $Z_i$ is a join of closed members of $\LL^\sg$, so $Z_i=\join\Z_i$ for some $\Z_i\sub K(\LL^\sg)$. Now $f\lo$ is a complete normal operator as it is the lower extension of a normal operator $f$ \cite[Sec.~4]{gehr:boun01}, and $f_{S_\LL}$ is a complete normal operator by Theorem \ref{fScomplop}, so we reason that
\begin{align*}
f\lo(\vv Z)
&= \textstyle
f\lo(\vc{\join\Z}{n}) \textstyle
\\ 
&=\textstyle
\join\{ f\lo(\vv Z'): Z'_i\in\Z_i \text{ for all }i<n\} \qquad \text{by \eqref{joincomplete} for $f\lo$,}
\\
&=\textstyle
\join\{ f_{S_\LL}(\vv Z'): Z'_i\in\Z_i \text{ for all }i<n\}  \qquad \text{as $f\lo= f_{S_\LL}$ on } K((\LL^\sg)^n),
\\
&=\textstyle
f_{S_\LL}(\vc{\join\Z}{n}) \qquad \text{by \eqref{joincomplete} for $f_{S_\LL}$},
\\
&= f_{S_\LL}(\vv Z).
\end{align*}

Finally, we show that $g_{T_\LL}$ is the upper canonical extension $g\up$. First we prove that
$g\up(\vv Z)=g_{T_\LL}(\vv Z)$ whenever $\vv Z$ is any \emph{open} element of $(\LL^\sg)^n$, which means that for all $i<m$, $Z_i$ is an open element of $\LL^\sg$, so $Z_i=\join\thet[B_i] $ for some $B_i\sub\LL$. Hence by the definition of $\join$ in $\LL^\sg$,
\begin{equation}\textstyle   \label{Ziopen}
Z_i=\lam_\lap\rho_\lap\bigcup_{a\in B_i}\fF_{a}=
\lam_\lap\bigcap_{a\in B_i}\fI_{a}=
\lam_\lap\{D\in\fI_\LL:B_i\sub D\}.
\end{equation}
Since $\vv Z\in O((\LL^\sg)^n)$,  \eqref{fupopen} gives
\begin{equation}\label{gupZjoin}
g\up(\vv Z)=\join\{\thet(g(\vv a)):\vv a\in\LL^n \text{ and }\thet(\vv a)\sub_\pi\vv Z\}.
\end{equation}
But if $\thet(\vv a)\sub_\pi\vv Z$,  then $\thet(g(\vv a))=g_{T_\LL}(\thet(\vv a))\sub g_{T_\LL}(\vv Z)$ as the dual operator $g_{T_\LL}$ is isotone. Hence we get $g\up(\vv Z)\sub g_{T_\LL}(\vv Z)$ by \eqref{gupZjoin}.

For the converse inclusion, suppose $F\in g_{T_\LL}(\vv Z)$. For $i<m$, let $D_i$ be the ideal of $\LL$ generated by $B_i$.
Then by \eqref{Ziopen},  $\rho_\lap Z_i=\rho_\lap\lam_\lap\{D\in\fI_\LL:B_i\sub D\}$, and so as $B_i\sub D_i$ we get $D_i\in\rho_\lap Z_i$.
Thus if $\vv D=(\vc{D}{m})$, then $\vv D\in\pi\rho_\lap\vv Z$, so $FT_\LL\vv D$ as $F\in g_{T_\LL}(\vv Z)$.
Hence there is some $\vv a\in\vv D$ such that $g(\vv a)\in F$.
For each $i<m$, we have $a_i\in D_i$ and so any filter containing $a_i$ intersects every ideal including $D_i$, i.e. $\thet(a_i)\sub \lam_\lap\{D\in\fI_\LL:D_i\sub D\}=Z_i$. Thus $\thet(\vv a)\sub_\pi\vv Z$, implying 
$\thet(g(\vv a))\sub g\up(\vv Z)$ by \eqref{gupZjoin}.  But $F\in\thet(g(\vv a))$, so then $F\in g\up(\vv Z)$.

That completes the proof that  $g\up$ and $g_{T_\LL}$ agree on all open members of $ (\LL^\sg)^n$. Since every member of  $ (\LL^\sg)^n$ is a meet of open members, and $g\up$ and $g_{T_\LL}$ are both complete normal dual operators preserving all meets in each coordinate, we can then show that  $g\up$ and $g_{T_\LL}$ are identical by using the order dual of \eqref{joincomplete}.
\end{proof}

Theorem \ref{Lplusplus} justifies the equation $\LL^\sg=(\LL_+)^+$. In the case that $\LL$ is the stable set lattice $P^+$ of an $\Om$-polarity, we will call the canonical structure $(P^+)_+$ the \emph{canonical extension of} $P$. Its stable lattice
$((P^+)_+)^+$ is the canonical extension $(P^+)^\sg$ of the $\Om$-lattice $P^+$.

\section{Dual Categories}

At the end of Section \ref{secbmorph} it was shown that there is a contravariant functor from $\opol$ to $\onlo$. We now construct such a functor in the reverse direction.

Let $\thet\colon (\LL,f_\LL,g_\LL)\to(\M,f_\M,g_\M)$   be an $\Om$-homomorphism between two $\Om$-NLO's.  If $E$ is a filter or ideal of $\M$, then $\thet\inv E$ is a filter or ideal of $\LL$, respectively, so we can define
 $\al_\thet:\fF_\M\to\fF_\LL$ and 
$\be_\thet:\fI_\M\to\fI_\LL$ by putting 
$\al_\thet F=\thet\inv F$ and $\be_\thet D=\thet\inv D$.

\begin{theorem}  \label{morphdual}
The pair $\thet_+=(\al_\thet,\be_\thet)$ is a bounded morphism from $\M_+$ to $\LL_+$.
  If $\thet$ is injective,  $\al_\thet$ and $\be_\thet$ are  surjective.
   If $\thet$ is surjective,  then $\al_\thet$ and $\be_\thet$ are injective and $\thet_+$ is an isomorphism from $\M_+$ to the inner substructure $\im\thet_+$ of\/ $\LL_+$.

\end{theorem}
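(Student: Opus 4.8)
The plan is to verify the seven back-and-forth conditions of Definition~\ref{defmorph} directly for the pair $\al_\thet,\be_\thet$, exploiting throughout that $\thet$ is an $\Om$-homomorphism and that filters/ideals pull back to filters/ideals. First I would record that $\al_\thet$ and $\be_\thet$ are isotone: by Lemma~\ref{lesub} the quasi-orders $\le_1,\le_2$ on the canonical structures are just $\sub$, and $E\sub E'$ clearly gives $\thet\inv E\sub\thet\inv E'$. For the $\lap$-conditions: (1$_\lap$) says $\al_\thet F\lap\be_\thet D$ implies $F\lap D$, i.e.\ if $\thet\inv F\cap\thet\inv D\ne\emptyset$ then $F\cap D\ne\emptyset$; pick $a$ with $\thet(a)\in F\cap D$, then $\thet(a)$ witnesses $F\lap D$. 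For (2$_\lap$), using the contrapositive form \eqref{pmorph}: given $F'\in\fF_\M$, $D\in\fI_\LL$ with not $F'\lap\be_\thet D$, the set $\thet[F']$ is disjoint from $D$; I would let $F=\thet\inv F'$ (so $\al_\thet^{-1}[F')_1$ is the set of filters $\sub$-above $F$) and find a point of it disjoint from $D$ --- $F$ itself works once one checks $F\cap D=\emptyset$, which follows because $a\in F\cap D$ would give $\thet(a)\in\thet[F]\sub F'$ (filters are upsets and $\thet(a)\ge$ something in $F'$\dots) --- more carefully, $a\in\thet\inv F'$ means $\thet(a)\in F'$, and $a\in D$ then contradicts $\thet[F']$\,--disjointness only after noting $\thet(a)\in\thet[\thet\inv F']\sub F'$. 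Condition (3$_\lap$) is dual, swapping the roles of filters and ideals and using that $\thet\inv D'$ is an ideal.

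For the operator conditions I would use the explicit definitions of $S_\LL$ and $T_\LL$ together with homomorphy of $\thet$. For (1$_S$): suppose $\al_\thet(\vv F)S_\M'\be_\thet(D)$ in $\M_+$ --- wait, the codomain here is $\LL_+$, so this reads $\al_\thet(\vv F)S_\LL \be_\thet(D)$, meaning there is $\vv a\in_\pi\al_\thet(\vv F)=\pi\thet\inv\vv F$ with $f_\LL(\vv a)\in\be_\thet D=\thet\inv D$. Then $\thet(a_i)\in F_i$ for each $i$, and $\thet(f_\LL(\vv a))=f_\M(\thet(\vv a))\in D$ since $\thet$ preserves $f$; so $\thet(\vv a)\in_\pi\vv F$ witnesses $\vv F S_\M D$. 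Condition (2$_S$) is again handled contrapositively: given not $\vv{F'}S_\M\be_\thet D$ (no $\vv b\in_\pi\vv{F'}$ has $f_\M(\vv b)\in\thet\inv D$, i.e.\ no $\thet$-image hits $D$), put $F_i=\thet\inv F'_i$ and check that $\vv F\in\al_\thet^{-1}[\vv{F'})_1$ while not $\vv F S_\LL D$: if $f_\LL(\vv a)\in\thet\inv D$ for some $\vv a\in_\pi\vv F$, then $f_\M(\thet\vv a)=\thet f_\LL(\vv a)\in D$ with $\thet\vv a\in_\pi\vv{F'}$, contradiction. Conditions (1$_T$), (2$_T$) are the order-duals, using preservation of $g$ and that $\thet\inv$ of an ideal is an ideal.

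For the surjectivity claims when $\thet$ is injective: $\thet$ being a lattice embedding, I want every filter $G\in\fF_\LL$ to be $\thet\inv F$ for some $F\in\fF_\M$, and similarly for ideals. Take $F$ to be the filter of $\M$ generated by $\thet[G]$; then $\thet\inv F\supseteq G$ trivially, and for the reverse inclusion, $b\in\thet\inv F$ means $\thet(b)\ge\thet(a_1)\wedge\cdots\wedge\thet(a_k)=\thet(a_1\wedge\cdots\wedge a_k)$ for some $a_i\in G$, and injectivity (order-invariance of an embedding) gives $b\ge a_1\wedge\cdots\wedge a_k\in G$, so $b\in G$. The ideal case is dual. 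Hence $\al_\thet,\be_\thet$ are surjective.

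For the final clause, when $\thet$ is surjective: first injectivity of $\al_\thet$ is the standard fact that a surjection's preimage map is injective --- if $\thet\inv F=\thet\inv F'$ then $F=\thet[\thet\inv F]=\thet[\thet\inv F']=F'$ using $\thet[\thet\inv E]=E$ for surjective $\thet$; likewise for $\be_\thet$. Then, by Theorem~\ref{converseback}, it suffices to show $\al_\thet,\be_\thet$ \emph{preserve} $\lap,S_\LL,T_\LL$ (the injectivity is in hand), and the conclusion that $\thet_+$ is an isomorphism onto $\im\thet_+$ follows immediately. Preservation of $\lap$: if $F\lap D$, pick $b\in F\cap D$; by surjectivity $b=\thet(a)$, then $a\in\thet\inv F\cap\thet\inv D$, so $\al_\thet F\lap\be_\thet D$. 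Preservation of $S_\LL$: if $\vv F S_\M D$, there is $\vv b\in_\pi\vv F$ with $f_\M(\vv b)\in D$; lift each $b_i=\thet(a_i)$ by surjectivity, so $\vv a\in_\pi\thet\inv\vv F=\al_\thet\vv F$, and $\thet(f_\LL(\vv a))=f_\M(\vv b)\in D$ gives $f_\LL(\vv a)\in\thet\inv D=\be_\thet D$, hence $\al_\thet(\vv F)S_\LL\be_\thet(D)$. Preservation of $T_\LL$ is dual. This gives the isomorphism $\M_+\cong\im\thet_+$.

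I expect the main obstacle to be the \emph{forth} conditions (2$_\lap$), (3$_\lap$), (2$_S$), (2$_T$): these are the ones phrased as "$(\al^{-1}[x')_1)Ry$ implies $x'R'\be(y)$", and the cleanest route is always the contrapositive \eqref{pmorph}-style reformulation, where one must produce a witness filter/ideal $\sub$-above the given point that fails the relation. The delicate point is checking that the naive candidate --- namely $\thet\inv$ of the given $\M$-point --- both lies in the relevant $[{-}]_1$/$[{-}]_2$ set (which is automatic, since $\thet\inv F'\supseteq$ nothing, rather $F'$ pushes forward into itself so $\al_\thet(\thet\inv F')=\thet[\thet\inv F']\sub F'$ need not equal $F'$ when $\thet$ is not surjective, but we only need $F'\le_1\al_\thet(\thet\inv F')$, i.e.\ $F'\supseteq\thet[\thet\inv F']$, wait we need the other direction) --- so I would double-check the direction of the inclusion: condition (2$_S$) needs $\vv x\in\al_\thet^{-1}[\vv{x'})_1$, i.e.\ $\vv{x'}\le_1\al_\thet(\vv x)$, i.e.\ $\vv{x'}\sub_\pi\thet\inv\thet[\dots]$; taking $\vv x=\thet\inv\vv{x'}$ gives $\al_\thet\vv x=\thet\inv\vv{x'}$ and we need $\vv{x'}\sub\thet\inv\vv{x'}$ which is false in general. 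The correct witness is instead a point of $\al_\thet^{-1}[\vv{x'})_1$ chosen so that $\al_\thet$ maps it \emph{above} $\vv{x'}$; since $\al_\thet\vv x=\thet\inv\vv x$ and we want $\vv{x'}\sub\thet\inv\vv x$ i.e.\ $\thet[\vv{x'}]\sub\vv x$, the right choice is $\vv x=$ the filter generated by $\thet[\vv{x'}]$ (componentwise), and then one verifies not $\vv x S_\LL y$ using the finitary-generation argument exactly as in the proof that sections of $S_\LL$ are stable. So the genuine content is: these forth conditions reduce, via the generated-filter trick and homomorphy of $\thet$, to the same kind of finite-meet manipulation already carried out in the stability lemma, and I would present them in that style. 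Everything else is bookkeeping with $\thet\inv$.
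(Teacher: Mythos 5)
Your proposal is correct and follows essentially the same route as the paper: isotonicity via Lemma \ref{lesub}, the back conditions by pushing witnesses forward through $\thet$, the forth conditions via the filter (resp.\ ideal) of $\M$ generated by the $\thet$-image of the given point of $\LL_+$ together with the finite-meet/join manipulation, surjectivity of $\al_\thet,\be_\thet$ from order-invariance of an injective $\thet$, and the final clause via Theorem \ref{converseback} by lifting witnesses through a surjective $\thet$. The only blemish is the garbled first pass at (2$_\lap$) and (2$_S$) (wrong sorts and the wrong witness $\thet\inv$ of the codomain point), but your closing self-correction identifies exactly the right witness --- the componentwise filter generated by $\thet[\vv{x'}]$ --- which is precisely the paper's argument.
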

\begin{proof}
Since $\thet\inv$ preserves set inclusion,  $\al_\thet$ and $\be_\thet$ are isotone by Lemma \ref{lesub}. We show that they fulfils the conditions of Definition \ref{defmorph}, with $R=\lap$.

(1$_R$): Let $F\in\fF_\M$ and $D\in\fI_\M$ have $\al_\thet F\lap \be_\thet D$. Then there is some $a\in\thet\inv F\cap\thet\inv D$. Then $\thet a\in F\cap D$, showing that $F\lap D$.

(2$_R$): Let $F\in\fF_\LL$ and $D\in\fI_\M$ have $\al_\thet\inv[F)_1\lap D$, where $[F)_1=\{G\in\fF_\LL:F\sub G\}$. We have to show that $F\lap \be_\thet (D)$.

Now as $\thet$ preserves finite meets, the subset $\thet[F]$ of $\M$ is closed under finite meets. Hence the filter it generates is its upward closure in $\M$, i.e.\ the set
\begin{equation}  \label{filgenF}
G=\{b\in\M: \exists a\in F(\thet(a)\leq b)\}.
\end{equation}
Since $\thet[F]\sub G$ we have $F\sub\thet\inv(G)=\al_\thet(G)$, and so $G\in \al_\thet\inv[F)_1$. But
 $\al_\thet\inv[F)_1\lap D$, so there exists $b\in G\cap D$, and so for some $a\in F$, 
$ \thet(a)\leq b\in D$. As $D$ is an ideal, this gives  $\thet(a)\in D$. Thus $a\in F\cap\thet\inv(D)=F\cap \be_\thet (D)$, giving the desired result $F\lap \be_\thet (D)$.

(3$_R$): This is just the order-dual of the argument for (2$_R$).
 Let $F\in\fF_\M$ and $D\in\fI_\LL$ have $F\lap \be_\thet\inv[D)_2$, where $[D)_2=\{E\in\fI_\LL:D\sub E\}$. We have to show that $\al_\thet(F)\lap D$.
 
The subset $\thet[D]$ of $\M$ is closed under finite joins, so the ideal it generates is 
$$
E=\{b\in\M: \exists a\in D(b\leq\thet(a))\}.
$$
Since $\thet[D]\sub E$ we have $D\sub\thet\inv(E)=\be_\thet(E)$, and so $E\in \be_\thet\inv[D)_2$. But
$F\lap \be_\thet\inv[D)_2$, so there exists $b\in F\cap E$, and so for some $a\in D$, 
$b\leq\thet(a)$.  As $F$ is a filter containing $b$, this implies $\thet(a)\in F$, hence
$a\in \thet\inv(F)\cap D= \al_\thet(F)\cap D$, showing $\al_\thet(F)\lap D$ as desired.

(1$_S$):  Let $\vv F\in\fF_\M{}^n$ and $D\in\fI_\M$ have $\al_\thet (\vv F)S_{f_\LL} \be_\thet D$. Then there is some $\vv a\in_\pi\al_\thet (\vv F)=\thet\inv (\vv F)$ with  $f_\LL(\vv a)\in \be_\thet D$.
Then $\thet(\vv a)\in_\pi\vv F$, and  $f_\M(\thet(\vv a))= \thet(f_\LL(\vv a))\in D$, showing that $\vv F S_{f_\M} D$.

(2$_S$):  Let  $\al_\thet\inv[\vv{F})_1S_{f_\M} D$, where $\vv{F}\in\fF_\LL{}^n$ and $D\in\fI_\M$. For all $i<n$, let $G_i$ be the filter of $\M$ generated by $\thet[F_i]$. As in the case of (2$_R$), we have
\begin{equation} \label{defGi}
G_i=\{b\in \M: \exists a\in F_i\,(\thet(a)\leq b)\}.
\end{equation}
Let $\vv G=(\vc{G}{n})$. Since in general $F_i\sub\thet\inv G_i=\al_\thet(G_i)\in\fF_\LL$, we get 
$\vv{F}\sub_\pi\al_\thet(\vv G)$, hence $\vv G\in \al_\thet\inv[\vv{F})_1$. But  $\al_\thet\inv[\vv{F})_1S_{f_\M} D$, 
so there must exist $\vv b\in_\pi\vv G$ such that $f_\M(\vv b)\in D$.
 For all $i<n$, as $b_i\in G_i$, by \eqref{defGi} there exists $a_i\in F_i$ with $\thet(a_i)\leq b_i$. Putting $\vv a=(\vc{a}{n})$, we have
 $\thet(\vv a)\leq\vv b$, hence   $f_\M(\thet(\vv a))\leq f_\M(\vv b)\in D$, and so $f_\M(\thet(\vv a))\in D$, i.e.\ 
 $\thet(f_\LL(\vv a))\in D$. Thus $f_\LL(\vv a)\in\thet\inv(D)=\be_\thet(D)$. But $\vv a\in_\pi\vv{F}$, so this proves
 $\vv{F}S_{f_\LL} \be_\thet(D)$ as required.

(1$_T$):  Let $\al_\thet  (F)T_{g_\LL} \be_\thet(\vv D)$, where  $ F\in\fF_\M$ and $\vv D\in\fI_\M{}^m$, so that there is some $\vv a\in_\pi \be_\thet(\vv D)$ with $g_\LL(\vv a)\in\al_\thet (F)$. 
Then $\thet(\vv a)\in_\pi\vv D$, and  $g_\M(\thet(\vv a))= \thet(g_\LL(\vv a))\in F$, showing that $ F T_{g_\M} \vv D$.

(2$_T$):  Let   $F T_{g_\M} \be_\thet\inv[\vv{D})_2$, where $F\in\fF_\M$ and $\vv D\in\fI_\LL{}^m$. Let $E_i$ be the ideal of $\M$ generated by $\thet[D_i]$, so that
$$
E_i=\{b\in\M: \exists a\in D_i(b\leq\thet(a))\}.
$$
Let $\vv E=(\vc{E}{m})$.  Then $D_i\sub\thet\inv(E_i)=\be_\thet(E_i)$ for all $i<m$, so $\vv D\sub_\pi\be_\thet(\vv E)$, hence $\vv E\in\be_\thet\inv[\vv D)_2$, and therefore $FT_{g_\M}(\vv E)$.
So there must exist $\vv b\in_\pi\vv E$ such that $g_\M(\vv b)\in F$.
 For all $i<m$, as $b_i\in E_i$  there exists $a_i\in D_i$ with $b_i\leq \thet(a_i)$. Putting $\vv a=(\vc{a}{m})$, we have
 $\vv b\leq\thet(\vv a)$, hence   $g_\M(\vv b)\leq g_\M(\thet(\vv a))$.
  As $F$ is a filter containing $g_\M(\vv b)$, this implies $ g_\M(\thet(\vv a))\in F$,  i.e.\ $\thet(g_\LL(\vv a))\in F$.
 Thus $g_\LL(\vv a))\in\al_\thet(F)$. Since $\vv a\in_\pi\vv D$, this proves  $\al_\thet(F)T_{g_\LL}\vv D$ as required.
 
 That completes the proof that the pair $\al_\thet,\be_\thet$ is a bounded morphism. 
  Now suppose that  $\thet$ is injective. To show $\al_\thet$ is surjective, take any $F\in\fF_\LL$. Let $G$ be the filter of $\M$ generated by $\thet[F]$, as given in \eqref{filgenF}. Then $F\sub\thet\inv G$. To prove the converse inclusion, let $b\in\thet\inv G$. Then by \eqref{filgenF},  there exists $a\in F$  such that $\thet(a)\leq\thet(b)$. But $\thet$, being an injective homomorphism, is order invariant, so this implies $a\leq b$, hence $b\in G$.
 Thus  $G=\thet\inv F=\al_\thet F$, showing that   $\al_\thet:\fF_\M\to\fF_\LL$ is surjective.
 A dual argument shows that $\be_\thet:\fI_\M\to\fI_\LL$ is surjective: if $E\in\fI_\LL$, then $E=\be_\thet D$ where $D$ is the ideal of $\M$ generated by $\thet[E]$.
 
 Finally, suppose that $\thet$ is surjective. If $\al_\thet(F)=\al_\thet(G)$, then
 $F= \thet[\thet\inv F]=\thet[\thet\inv G]=G$, so $\al_\thet$ is injective. Similarly  $\be_\thet$ is injective.
Then by Theorem \ref{converseback}, to show that $\thet_+$ makes $\M_+$ isomorphic to $\im\thet_+$, it suffices to show that it preserves the relations.

Preservation of $R$: Let $F\in\fF_\M$ and $D\in\fI_\M$ have  $ F\lap  D$. Then there is some $b\in F\cap D$. But $b=\thet(a)$ for some $a$ as $\thet$ is surjective. Then $a\in\thet\inv F\cap\thet\inv G$, so $\al_\thet F\lap \be_\thet D$.

Preservation of $S$:  Let $\vv F\in\fF_\M{}^n$ and $D\in\fI_\M$ have $\vv F S_{f_\M} D$. Then there is some 
$\vv b\in_\pi\vv F$ with $f_\M(\vv b)\in D$. But  $\vv b=\thet(\vv a)$ for some $\vv a$. Then $\vv a\in\thet\inv(\vv F)=\al_\thet (\vv F)$, and
$\thet(f_\LL(\vv a))=f_\M(\vv b)$, so $f_\LL(\vv a)\in \thet\inv(D)= \be_\thet D$.
Hence $\al_\thet (\vv F)S_{f_\LL} \be_\thet D$. 
 
 The preservation of $T$ is similar.
\end{proof}

Using this result we can infer that the mappings $\LL\mapsto\LL_+$ and $\thet\mapsto\thet_+$ give a contravariant functor from $\onlo$ to $\opol$.

\section{Direct Sums}

Let   $\{P_j: j\in J\}$ be an indexed set of $\Om$-polarities, with $P_j=( X_j, Y_j, R_j,S_j,T_j)$. We define a structure
$$\textstyle
\sum_JP_j=(\sum_J X_j, \sum_J Y_j, R_J,S_J,T_J)
$$
whose stable set lattice $(\sum_JP_j)^+$ is isomorphic to the direct product $\prod_J P_j^+$ of the stable set lattices of the $P_j$'s. This $\sum_JP_j$ is called the \emph{direct sum} of the $P_j$'s.  The polarity part of its definition is due to Wille \cite{will:rest82,will:subd87} and is given also  in \cite[p.184]{gant:form99}.

For each $j\in J$, let $\dot X_j=X_j\times\{j\}$ and $\dot Y_j=Y_j\times\{j\}$. Then  
  $\sum_J X_j=\bigcup_J\dot X_j$   is
 the disjoint union of the $X_j$'s, and   $\sum_J Y_j=\bigcup_J\dot Y_j$ is the disjoint union of the $Y_i$'s.
However, unlike the case of Kripke modal frames, $R$ is not the disjoint union of the $R_j$'s. Rather, we put
\begin{align*}
\dot R_j&=\{((x,j),(y,j)): xR_jy\},
\\
\dot S_j&=  \{((x_0,j),\dots,(x_{n-1},j),(y,j)):   \vv xS_jy\},
\\
\dot T_j&=  \{((x,j),(y_0,j),\dots,(y_{m-1},j)):  xT_j\vv y\},
\end{align*}
and then define
\begin{align*} 
R_J&=  \bigcup\nolimits_J  \dot R_j\ \cup\ \bigcup\{\dot X_j\times \dot Y_k : j\ne k\},
\\
S_J&=  \bigcup\nolimits_J  \dot S_j\ \cup\ \bigcup\{\dot X_{j_0}\times\cdots\times \dot X_{j_{n-1}} \times \dot Y_k : 
(\exists i<n)\, j_i\ne k\},
\\
T_J&=  \bigcup\nolimits_J  \dot T_j\ \cup\ \bigcup\{ \dot X_k\times \dot Y_{j_0}\times\cdots\times \dot Y_{j_{m-1}}  : 
(\exists i<m)\, j_i\ne k\}.
\end{align*}
Spelling this out, we have that $(x,j)R_J(y,k)$ iff either $j\ne k$ or else $j=k$ and $xR_k y$. Likewise,
$((x_0,j_0),\dots,(x_{n-1},j_{n-1}))S_J(y,k)$ iff either $ j_i\ne k$ for some  $i<n$, or else  
$(\vc{x}{n})S_k y$. The description of $T_J$ is similar.

\begin{lemma}
$\sum_JP_j$ is an $\Om$-polarity.
\end{lemma}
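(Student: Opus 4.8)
The plan is to reduce stability in $\sum_JP_j$ to stability in the summands by a coordinatewise analysis, and then to run a short case split on the shape of the sections of $S_J$ and $T_J$. That $R_J$, $S_J$ and $T_J$ are relations on $\sum_JX_j$ and $\sum_JY_j$ of the required arities is immediate from their definitions, so the only real content is the stability of all sections of $S_J$ and $T_J$.

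First I would record how the polars behave coordinatewise. For $A\sub\sum_JX_j$ and $k\in J$ write $A_k=\{x\in X_k:(x,k)\in A\}$ for the $k$-th component of $A$, and similarly for subsets of $\sum_JY_j$. Since $(x,j)R_J(y,k)$ holds automatically when $j\ne k$ and amounts to $xR_ky$ when $j=k$, we have $(y,k)\in\rho_{R_J}A$ iff $A_kR_ky$ iff $y\in\rho_{R_k}A_k$; thus the $k$-th component of $\rho_{R_J}A$ is $\rho_{R_k}A_k$, and dually the $k$-th component of $\lam_{R_J}B$ is $\lam_{R_k}B_k$. Composing, the $k$-th component of $\lam_{R_J}\rho_{R_J}A$ is $\lam_{R_k}\rho_{R_k}A_k$, so a subset of $\sum_JX_j$ is stable iff each of its components is stable in the corresponding $P_k$, and likewise for subsets of $\sum_JY_j$. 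In particular each $X_k$ is stable in $P_k$ (trivially $\lam_{R_k}\rho_{R_k}X_k\sub X_k$), so $\sum_JX_j$ is stable; more generally any subset of $\sum_JX_j$ all of whose components are stable in the respective $P_k$ is stable, and the same holds on the $Y$-side.

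Next I would classify the sections. A section $S_J[\vv x,-]$ with $\vv x=((x_0,j_0),\dots,(x_{n-1},j_{n-1}))$ is either all of $\sum_JY_j$, when the indices $j_0,\dots,j_{n-1}$ are not all equal (then for every $\ell$ some $j_i\ne\ell$, so $\vv xS_J(y,\ell)$ for all $(y,\ell)$), or, when $j_0=\dots=j_{n-1}=k$, has $k$-th component the section $S_k[(\vc{x}{n}),-]$ of $S_k$ and all other components equal to the full $Y_\ell$; either way it is stable by the previous paragraph, using that $P_k$ is an $\Om$-polarity. A section $S_J[\vv x[-]_i,(y,\ell)]$ is all of $\sum_JX_j$ when some fixed index $j_p$ (with $p\ne i$) differs from $\ell$, and otherwise, when $j_p=\ell$ for all $p\ne i$, has $\ell$-th component the $S_\ell$-section $S_\ell[(\vc{x}{n})[-]_i,y]$ and all other components full; so it too is stable. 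The sections of $T_J$ are treated identically: $(x,k)T_J((y_0,j_0),\dots,(y_{m-1},j_{m-1}))$ holds automatically once some $j_i\ne k$ and reduces to $xT_k(\vc{y}{m})$ when all $j_i=k$, so a section $T_J[-,\vv y]$ is either all of $\sum_JX_j$ or has a single non-full component equal to a section $T_k[-,(\vc{y}{m})]$ of $T_k$, while a section $T_J[(x,k),\vv y[-]_i]$ is either all of $\sum_JY_j$ or has $k$-th component the $T_k$-section $T_k[x,(\vc{y}{m})[-]_i]$ with the rest full. In all cases stability follows from the coordinatewise criterion and the assumption that each $P_j$ is an $\Om$-polarity, so $\sum_JP_j$ is an $\Om$-polarity.

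There is no genuine obstacle here; the only point requiring care is the bookkeeping of the index tuples in the case split, and in particular the observation that a ``mixed'' tuple---one whose coordinates lie in different summands---is related to everything on the other side, which forces the corresponding sections to equal the full carrier and hence be trivially stable.
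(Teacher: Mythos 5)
Your proof is correct, but it takes a genuinely different route from the paper's. The paper argues locally and contrapositively on a single typical section: given $(u,l)$ outside $S_J[\vv{x_J}[-]_i,(y,k)]$, it notes that all indices must then equal $k$, uses stability of the corresponding section of $S_k$ in $P_k$ to produce a witness $z\in\rho_{R_k}S_k[\vv x[-]_i,y]$ with not $uR_kz$, and lifts $(z,k)$ to show $(u,l)\notin\lam_{R_J}\rho_{R_J}$ of the section; the remaining section types are declared similar. You instead prove a general coordinatewise lemma first --- that the $k$-th component of $\rho_{R_J}A$ is $\rho_{R_k}A_k$ and dually for $\lam_{R_J}$, so a subset of the sum is stable iff every component is stable in its summand --- and then observe that each section of $S_J$ or $T_J$ is either the full carrier or has exactly one possibly proper component, namely a section of the corresponding $S_k$ or $T_k$, with all other components full. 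Both the index bookkeeping (a mixed tuple is related to everything, forcing full sections) and the componentwise identification are handled correctly. What your approach buys is reusability: the coordinatewise stability criterion is exactly the mechanism behind the later isomorphism $(\sum_JP_j)^+\cong\prod_JP_j^+$ (Theorem \ref{sumprod}), which the paper proves by a separate witness-style argument, so your lemma would streamline that proof as well; what the paper's approach buys is a self-contained verification that does not require setting up the component decomposition in advance and that exhibits directly how stability witnesses transfer between a summand and the sum. The only caveat, which applies equally to the paper's proof, is that degenerate arities ($n=0$ or $m=0$) are not treated explicitly, but nothing essential hinges on this.
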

\begin{proof}
This requires that all sections of $S_J$ and  $T_J$ are stable. We prove stability for any section of the form
$S_J[\vv{x_J}[-]_i,(y,k)]$, where 
$$\textstyle
\vv{x_J}=((x_0,j_0),\dots,(x_{n-1},j_{n-1}))\in (\sum\nolimits_JX_j)^n,
$$
 $(y,k)\in\sum_JY_j$, and $i<n$.  Let $\vv x=(\vc{x}{n})$.
 
If an element $(u,l)$ of $\sum_JX_j$ is not in $S_J[\vv{x_J}[-]_i,(y,k)]$, then not $\vv{x_J}[(u,l)/i]S_J(y,k)$, so by definition of $S_J$
we have that $\{\vc{j}{i},l,j_{i+1},\dots,j_{n-1}\}=\{k\}$ and not $\vv x[u/i]S_k y$.
So $u$ is not in the section $S_k[\vv x[-]_i,y]$, which is stable in $P_j$.
Hence there is some $z\in Y_k$ with $z\in\rho_{R_k} S_k[\vv x[-]_i,y]$ but not $uR_k z$. Now we show that
\begin{equation}  \label{zkrhoS}
(z,k)\in\rho_{R_J} S_J[\vv{x_J}[-]_i,(y,k)].  
\end{equation}
Take any $(w,j)\in \sum_JX_j$ with $(w,j)\in S_J[\vv{x_J}[-]_i,(y,k)]$. If $j\ne k$, then $(w,j)R_J(z,k)$ by definition of $R_J$.
If $j=k$, then $w\in X_k$ and as $\vv{x_J}[(w,j)/i]S(y,k)$ we have $\vv x[w/i]S_ky$. Since
$z\in\rho_{R_k} S_k[\vv x[-]_i,y]$, this implies $wR_k z$, and so $(w,j)R_J(z,k)$ again. That proves \eqref{zkrhoS}.
But not $uR_kz$ and $l=k$, so not  $(u,l)R_J(z,k)$. By \eqref{zkrhoS} then,   $(u,l)\notin\lam_{R_J}\rho_{R_J}S_J[\vv{x_J}[-]_i,(y,k)]$, completing the proof that $S_J[\vv{x_J}[-]_i,(y,k)]$ is stable.

The cases of other sections of $S_J$, and those of $T_J$, are similar to this.
\end{proof}

Now for each $k\in J$, define functions $\al_k\colon X_k\to\sum_JX_j$ and  $\be_k\colon Y_k\to\sum_JY_j$ by putting
$\al_k(x)=(x,k)$ and $\be_k(y)=(y,k)$.

\begin{lemma}
The pair $\al_k,\be_k$ is a bounded morphism $P_k\to\sum_JP_j$  whose image is an inner substructure of $\sum_JP_j$ isomorphic to $P_k$.
\end{lemma}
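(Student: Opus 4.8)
The plan is to show first that $\al_k,\be_k$ is a bounded morphism, then that $\al_k,\be_k$ are injective relation-preservers so that Theorem \ref{converseback} applies, and finally combine with Corollary \ref{imageinner} to conclude that the image is an inner substructure isomorphic to $P_k$.

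\medskip

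For the bounded morphism claim, I would first check that $\al_k,\be_k$ are isotone. Since $\al_k(x)=(x,k)$ lands entirely inside $\dot X_k$, and the quasi-order $\le_1^J$ on $\sum_JX_j$ restricted to $\dot X_k$ agrees with $\le_1$ on $X_k$ (one verifies $\rho_{R_J}\{(x,k)\}\cap\dot Y_k$ corresponds to $\rho_{R_k}\{x\}$ while $\rho_{R_J}\{(x,k)\}$ always contains all of $\dot Y_l$ for $l\ne k$, so inclusion of these $Y$-sections is governed entirely by the $k$-component), isotonicity of $\al_k$ reduces to that of the identity on $(X_k,\le_1)$; similarly for $\be_k$. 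Then I would run through conditions (1$_R$)--(2$_T$) of Definition \ref{defmorph}. Conditions (1$_R$), (1$_S$), (1$_T$) are immediate: e.g.\ if $\al_k(x)R_J\be_k(y)$, i.e.\ $(x,k)R_J(y,k)$, then by the spelled-out description of $R_J$ (the $j=k$ case) we get $xR_ky$. For the ``back'' conditions (2$_R$), (3$_R$), (2$_S$), (2$_T$) I would argue contrapositively as in \eqref{pmorph}. Take (2$_R$): suppose not $(x,k)R_J(y',l)$ where $(y',l)\in\sum_JY_j$; by the description of $R_J$ this forces $l=k$ and not $xR_ky'$. Since $P_k$ is a polarity, $\rho_{R_k}\{y'\}=\rho_{R_k}\lam_{R_k}\rho_{R_k}\{y'\}$, and from not $xR_ky'$ we may (trivially, taking $y=y'$) find $y\in Y_k$ with $y'\le_2 y$ and not $xR_ky$; then $\be_k(y)=(y,k)$ satisfies $(y',k)\le_2^J(y,k)$ and not $(x,k)R_J(y,k)$, which is exactly what \eqref{pmorph} demands. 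The same template, using the analogous spelled-out descriptions of $S_J$ and $T_J$ (the relevant ``mismatched index'' clauses never apply once we are inside $\dot X_k$ and $\dot Y_k$), handles (3$_R$), (2$_S$), (2$_T$). This is routine but a little lengthy; the only mild subtlety is keeping track of which index-mismatch disjunct of $R_J$, $S_J$, $T_J$ is in force, and noting it is always the all-equal-to-$k$ branch.

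\medskip

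Once $\al_k,\be_k$ is established as a bounded morphism, Corollary \ref{imageinner} immediately gives that $\im(\al_k,\be_k)$ is an inner substructure of $\sum_JP_j$. It remains to see it is isomorphic to $P_k$. The maps $\al_k\colon X_k\to\dot X_k$ and $\be_k\colon Y_k\to\dot Y_k$ are manifestly bijections onto their images. They preserve $R_k$, $S_k$, $T_k$: e.g.\ $xR_ky$ gives $((x,k),(y,k))\in\dot R_k\sub R_J$, hence $\al_k(x)R_J\be_k(y)$, and likewise for $S_k$ via $\dot S_k\sub S_J$ and $T_k$ via $\dot T_k\sub T_J$. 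So by Theorem \ref{converseback}, $\al_k,\be_k$ gives an isomorphism between $P_k$ and $\im(\al_k,\be_k)$. Combining the two conclusions finishes the proof.

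\medskip

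I expect the main obstacle to be purely bookkeeping: verifying the back conditions requires correctly identifying, for each of $R_J$, $S_J$, $T_J$, that a failure of relatedness among elements all carrying index $k$ is equivalent to failure of the corresponding relation in $P_k$, and then transporting a witness from $P_k$ back up. There is no conceptual difficulty—no genuine use of density, compactness, or stability beyond what is packaged in Corollary \ref{imageinner} and Theorem \ref{converseback}—so the write-up can be kept brief by treating one representative ``back'' case (say (2$_R$) or (2$_S$)) in full and remarking that the others follow the same pattern, exactly as the paper does for the preceding lemma on sections of $S_J$.
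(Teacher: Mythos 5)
Your proposal is correct and takes essentially the same route as the paper: isotonicity via the index-mismatch clause of $R_J$, the forth conditions (1$_R$), (1$_S$), (1$_T$) read off directly from the definitions, the back conditions argued contrapositively by noting that failure of $R_J$, $S_J$, $T_J$ forces all indices to equal $k$ so that the point itself (with reflexivity of the quasi-order) serves as witness, and finally injectivity plus relation-preservation feeding into Theorem \ref{converseback} (with Corollary \ref{imageinner} giving the inner-substructure part). One minor slip: the worked ``back'' case you label (2$_R$) (domain $X$-coordinate against a codomain $Y$-coordinate) is really (3$_R$), but the same template covers (2$_R$) verbatim, exactly as in the paper's treatment of (2$_S$).
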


\begin{proof}
First we show that $\al_k$ is isotone. Let $\le_1^k$ be the quasi-order of $X_k$ determined by $R_k$, and $x\le_1^k x'$, i.e.\ $\rho_{R_k}\{x\}\sub \rho_{R_k}\{x'\}$. We have to show that  $\rho_{R_J}\{\al_k(x)\}\sub \rho_{R_J}\{\al_k(x')\}$. But if 
$(x,k)R_J(y,j)$ then either $k\ne j$ and hence $(x',k)R_J(y,j)$, or else $k=j$ and $xR_k y$, hence $x'R_k y$ as $x\le_1^k x'$, which again gives  $(x',k)R_J(y,j)$. The proof that $\be_k$ is isotone is similar.

Now given any $\vv x\in X_k^n$ and $y\in Y_k$, the definitions of $\al_k$, $\be_k$ and $S_J$ make it immediate that
$\al_k(\vv x)S_J\be(y)$ iff $\vv xS_k y$. So $\al_k,\be_k$ satisfies (1$_S$) and preserves $S$.

To show that (2$_S$) is satisfied, suppose that not $\vv{x_J} S_J \be_k(y)$, where $y\in Y_k$ and $\vv{x_J}=((x_0,j_0),\dots,(x_{n-1},j_{n-1}))$. Then $j_i=k$ for all $i<n$, and not $\vv x S_k y$, where $\vv x=(\vc{x}{n})$.
Then $\al_k(\vv x)=\vv{x_J}$, so $\vv x\in \al_k\inv[\vv{x_J})_1$, hence not  $\al_k\inv[\vv{x_J})_1S_k y$ as required by (2$_S$).

The proofs that $\al_k,\be_k$ satisfies the other back and forth conditions and also preserves $R$ and $T$ are similar to the above. Thus $\al_k,\be_k$ is a bounded morphism. Since $\al_k$ and $\be_k$ are both injective, the rest of this theorem follows by Theorem \ref{converseback}.
\end{proof}

\begin{theorem} \label{sumprod}
$(\sum_JP_j)^+$ is isomorphic to  $ \prod_J P_j^+.$
\end{theorem}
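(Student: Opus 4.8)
The plan is to construct an explicit isomorphism $\Phi\colon (\sum_JP_j)^+ \to \prod_J P_j^+$ and verify it is a lattice isomorphism that respects the operations $f_{S_J}$ and $g_{T_J}$. The natural candidate, given the preceding lemma, is to send a stable set $A\subseteq \sum_JX_j$ to the tuple of preimages $(\al_j\inv A)_{j\in J} = (\{x\in X_j:(x,j)\in A\})_{j\in J}$. By Lemma~\ref{morph}(3) each $\al_j\inv A$ is stable in $P_j$, so $\Phi$ is well-defined, and by Theorem~\ref{homdual} each component map $A\mapsto\al_j\inv A$ is an $\Om$-lattice homomorphism $(\sum_JP_j)^+\to P_j^+$; hence $\Phi$ is an $\Om$-lattice homomorphism into the product. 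It remains to show $\Phi$ is bijective.

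For injectivity, suppose $\al_j\inv A = \al_j\inv B$ for all $j$; since $\sum_JX_j = \bigcup_J\dot X_j$ and $A,B$ are subsets of this union, agreeing on each $\dot X_j$ forces $A=B$. For surjectivity, given a tuple $(A_j)_{j\in J}$ with each $A_j\in P_j^+$, I would set $A = \bigcup_J \dot A_j$ where $\dot A_j = \{(x,j):x\in A_j\} = \al_j[A_j]$, and check that $A$ is stable in $\sum_JP_j$ with $\al_j\inv A = A_j$. The latter identity is immediate from disjointness. Stability is the step requiring actual computation: one must show $\lam_{R_J}\rho_{R_J}A\subseteq A$. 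The key point is the ``cross'' clauses in the definition of $R_J$: for $x\in X_j$ and $y\in Y_k$ with $j\ne k$ we always have $(x,j)R_J(y,k)$. Consequently $\rho_{R_J}A = \bigcup_J \dot{B}_j$ where $B_j = \rho_{R_j}A_j$ (an element $(y,k)$ lies in $\rho_{R_J}A$ iff $(x,j)R_J(y,k)$ for all $(x,j)\in A$; the $j\ne k$ pairs are automatic, and the $j=k$ pairs say exactly $A_kR_k y$). Similarly $\lam_{R_J}\rho_{R_J}A = \bigcup_J \dot C_j$ with $C_j = \lam_{R_j}\rho_{R_j}A_j = A_j$ since $A_j$ is stable. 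Hence $\lam_{R_J}\rho_{R_J}A = A$, so $A\in(\sum_JP_j)^+$ and $\Phi(A) = (A_j)_{j\in J}$.

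The main obstacle—though it is more bookkeeping than conceptual difficulty—is that $\Phi$ being a bijective $\Om$-lattice homomorphism is not quite enough to conclude it is an $\Om$-lattice \emph{isomorphism}: one needs the inverse to also be a homomorphism. For the lattice reduct this is automatic (a bijective lattice homomorphism is an isomorphism), and the inverse then automatically preserves arbitrary existing joins and meets, hence the complete operators $f_{S_j}$ and $g_{T_j}$ computed from those joins via Theorem~\ref{fScomplop} and Theorem~\ref{gTcompletedual}. Alternatively, and perhaps more cleanly, I would verify directly that $\Phi\inv$ preserves $f_{S_J}$ and $g_{T_J}$ by running the surjectivity computation above in the coordinates of a tuple of stable sets: the cross clauses in $S_J$ and $T_J$ guarantee that $f_{S_J}(\bigcup_j\dot A_j^{(0)},\dots)$ restricted to $\dot X_k$ depends only on the $k$-th coordinates and equals $\al_k[f_{S_k}(A_k^{(0)},\dots,A_k^{(n-1)})]$, with the analogous statement for $g_{T_J}$. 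Either route completes the proof.

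\begin{proof}
Define $\Phi\colon (\sum_JP_j)^+ \to \prod_J P_j^+$ by $\Phi(A) = (\al_j\inv A)_{j\in J}$, where $\al_j\inv A = \{x\in X_j:(x,j)\in A\}$. By Lemma~\ref{morph}(3) applied to the bounded morphism $\al_j,\be_j$, each $\al_j\inv A$ is a stable subset of $X_j$, so $\Phi$ is well defined. By Theorem~\ref{homdual}, each component map $A\mapsto\al_j\inv A$ is an $\Om$-lattice homomorphism $(\sum_JP_j)^+\to P_j^+$, so $\Phi$ is an $\Om$-lattice homomorphism.

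$\Phi$ is injective: if $\al_j\inv A = \al_j\inv B$ for all $j$, then since $A$ and $B$ are subsets of $\sum_JX_j = \bigcup_J\dot X_j$ and they have the same intersection with each $\dot X_j$, we get $A = B$.

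$\Phi$ is surjective: given $(A_j)_{j\in J}\in\prod_J P_j^+$, put $A = \bigcup_J\al_j[A_j] = \bigcup_J\{(x,j):x\in A_j\}$. Clearly $\al_k\inv A = A_k$ for each $k$ by disjointness of the $\dot X_j$. We check $A$ is stable in $\sum_JP_j$, i.e.\ $\lam_{R_J}\rho_{R_J}A\sub A$. First, $(y,k)\in\rho_{R_J}A$ iff $(x,j)R_J(y,k)$ for all $j\in J$ and all $x\in A_j$. For $j\ne k$ this holds automatically by definition of $R_J$, while for $j=k$ it says $A_kR_k y$. Hence $\rho_{R_J}A = \bigcup_J\be_j[\rho_{R_j}A_j]$. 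Similarly, $(u,l)\in\lam_{R_J}\rho_{R_J}A$ iff $(u,l)R_J(y,k)$ for all $(y,k)\in\rho_{R_J}A$; for $k\ne l$ this is automatic, and for $k=l$ it says $uR_l\rho_{R_l}A_l$, i.e.\ $u\in\lam_{R_l}\rho_{R_l}A_l$. Hence $\lam_{R_J}\rho_{R_J}A = \bigcup_J\al_j[\lam_{R_j}\rho_{R_j}A_j] = \bigcup_J\al_j[A_j] = A$, using that each $A_j$ is stable. So $A\in(\sum_JP_j)^+$ and $\Phi(A) = (A_j)_{j\in J}$.

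Thus $\Phi$ is a bijective $\Om$-lattice homomorphism. Its inverse is therefore a lattice isomorphism, hence preserves all existing joins; since each $f_{S_j}$ is a complete operator determined by those joins (Theorem~\ref{fScomplop}) and each $g_{T_j}$ is a complete dual operator determined by meets (Theorem~\ref{gTcompletedual}), $\Phi\inv$ also preserves the operations $f_{S_J}\mapsto(f_{S_j})_j$ and $g_{T_J}\mapsto(g_{T_j})_j$. Hence $\Phi$ is an isomorphism of $\Om$-lattices, so $(\sum_JP_j)^+\cong\prod_J P_j^+$.
\end{proof}
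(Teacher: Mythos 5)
Your proof is correct and follows essentially the same route as the paper's: the map $A\mapsto(\al_j\inv A)_{j\in J}$, componentwise homomorphism via Theorem \ref{homdual}, injectivity from disjointness of the $\dot X_j$'s, and surjectivity by showing $\bigcup_J\al_j[A_j]$ is stable using the cross clauses in the definition of $R_J$ (the paper argues the stability contrapositively, you compute $\rho_{R_J}$ and $\lam_{R_J}\rho_{R_J}$ directly, but it is the same idea). One small caveat about your closing paragraph: the inference that $\Phi\inv$ preserves the operations because each $f_{S_j}$ is a complete operator ``determined by joins'' is not valid as stated --- preserving all joins does not transfer an operator along a map, since distinct complete operators on the same lattice all preserve joins. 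The worry it was meant to address is in fact vacuous: a bijective homomorphism between algebras (structures with operations only) automatically has a homomorphic inverse, since for $\vv{b}$ in the codomain, $f'(\vv{b})=f'(\Phi(\Phi\inv\vv{b}))=\Phi(f_{S_J}(\Phi\inv\vv{b}))$ gives $\Phi\inv(f'(\vv{b}))=f_{S_J}(\Phi\inv\vv{b})$, and likewise for $g$; this is how the paper (implicitly) treats the point.
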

\begin{proof}
For each $k\in J$,  the bounded morphism $\al_k,\be_k$ induces a  $\Om$-homomorphism
$\thet_k:(\sum_JP_j )^+ \to P_k^+$ by Theorem \ref{homdual}.
The direct product of these $\thet_k$'s is the  $\Om$-homomorpism
$\thet:(\sum_JP_j )^+ \longrightarrow \prod_J P_j^+$ defined by $\thet(A)(k)=\thet_k A=\al_k\inv A$.
$\thet$ is injective, for suppose $\thet(A)=\thet(B)$ and take any $(x,k)\in \sum_J X_j$. Then
$\thet(A)(k)=\thet(B)(k)$, i.e.\ $\al_k\inv A=\al_k\inv B $, so $(x,k)\in A$ iff $(x,k)\in B$. Hence $A=B$. 

Thus if $\thet$ is also surjective, it provides the desired isomorphism. To prove this surjectivity, let 
$\langle B_j:j\in J\rangle$ be any member of $\prod_I P_i^+$. Put
$B=\bigcup_J\al_j[B_j]\sub\sum_JX_j$. If $B\in (\sum_IP_i )^+$, then $\thet(B)$ is defined with
$\thet(B)(j)=\al_j\inv B=B_j$ for all $j$, hence $\thet (B)=\langle B_j:j\in J\rangle$.

Thus it remains to prove that $B$ is stable. Take any $(x,k)\in \sum_JX_j$ with $(x,k)\notin B$. We want 
$(x,k)\notin \lam_{R_J}\rho_{R_J}B$. We have  $(x,k)\notin \al_k[B_k]$, so $x\notin B_k$.  But    $B_k\in P_k^+$, so there exists a $y\in\rho_{R_k}B_k$ with not $xR_k y$. Hence not $(x,k)R_J(y,k)$.

Now we show that $(y,k)\in\rho_{R_J} B$. Any member of $B$ has the form $(z,j)\in\al_j[B_j]$ for some $j$ with $z\in B_j$.
If $j\ne k$, then $(z,j)R_J(y,k)$ by definition of $R_J$. But if $j=k$, then $z\in B_k$, hence $zR_k y$ as $y\in\rho_{R_k}B_k$, giving $(z,j)=(z,k)R_J(y,k)$ again. This proves that $(y,k)\in\rho_{R_J} B$. Since not $(x,k)R_J(y,k)$, we have $(x,k)\notin \lam_{R_J}\rho_{R_J}B$ as required to prove that $B$ is stable and complete the proof that $\thet$ is surjective.
\end{proof}

It is notable that the direct sum $\sum_JP_j$ and the bounded morphisms $\{\al_j,\be_j:j\in J\}$ form a \emph{coproduct} of 
 $\{P_j: j\in J\}$ in the category $\opol$. This means that for any $\Om$-polarity $P$ and bounded morphisms
 $\{\al'_j,\be'_j\colon P_j\to P : j \in J\}$, there is exactly one bounded morphism $\al,\be\colon \sum_JP_j\to P$ that factors each $\al'_j,\be'_j$ through $\al_j,\be_j$, i.e.\ $\al'_j,\be'_j=(\al,\be)\circ(\al_j,\be_j)$. The only maps that could do this are given by $\al(x,j)=\al'_j(x)$ and $\be(y,j)=\be'_j(y)$. It is left as an exercise for the reader to confirm that $\al,\be$ as thus defined is indeed a bounded morphism.
 
\section{Saturated extensions of $\Om$-polarities}

Recall that  we take the \emph{canonical extension} of an $\Om$-polarity $P$ to be the structure $(P^+)_+$.
Regarding $P$ as a model for a first-order language, we will now show that a sufficiently saturated elementary extension of $P$ can be mapped to the canonical extension $(P^+)_+$ by a bounded morphism.

Let $\sL=\{\ov X,\ov Y,\ov R,\ov S,\ov T\}$ be a signature  consisting of relation symbols corresponding to the different components of an  $\Om$-polarity. Fix an $\sL$-structure
$P=(X,Y,R,S,T)$ that is an $\Om$-polarity, and let $\sL_P=\{\ov A:A\in P^+\}$, where each $\ov A$ is a unary relation symbol. Then $P$ expands to an $\sL_P$-structure, which we continue to call $P$, by interpreting each symbol $\ov A$ as the set $A$.

Now let  
$$
P^*=(X^*,Y^*,R^*,S^*,T^*,\{A^*:A\in P^+\})
$$ 
be an $\sL_P$-structure that is an $\om$-saturated elementary extension of $P$. Then $P$ and $P^*$ satisfy the same first-order $\sL_P$-sentences. For each $A\in P^+$, $A^*$ is the subset of $X^*$ interpreting the symbol $\ov A$.  To explain $\om$-saturation, consider the process of taking a set $Z$ of elements of $P^*$, expanding $\sL_P$ to $\sL_P^Z$ by adding a set $\{\ov z:z\in Z\}$ of individual constants, and expanding $P^*$ to an  $\sL_P^Z$-structure $(P^*,Z)$ by interpreting each constant $\ov z$ as $z$. Then $\om$-saturation of $P^*$ means that for any finite set $Z$ of elements of $P^*$, and any set  $\Gamma$ of 
$\sL_P^Z$-formulas,  if each finite subset of $\Gamma$ is  satisfiable in $(P^*,Z)$ ,then $\Gamma$ is  satisfiable in 
$(P^*,Z)$.

The sentence
$\forall v_0\forall v_1\big(v_0\ov Rv_1\to\ov X(v_0)\land\ov Y(v_1)\big)$
is true in $P$, hence is true in $P^*$, implying that  $R^*\sub X^*\times Y^*$. Similarly we can show that
$S^*\sub  (X^*)^n\times Y^*$ and $T^*\sub  X^*\times (Y^*)^m$. 

As is common, we write $\ph(\vv v)$ to indicate that the list $\vv v$ of variables includes all the free variables of formula $\ph$. Then $\ph(\vv w)$ denotes the formula obtained by freely replacing each free occurrence of each $v_i$ in $\ph$ by $w_i$.

Now for a formula $\ph(\vv v,w)$ let  $\rho\ph(\vv v,w)$ be the formula
$$
\forall u(\ph(\vv v,u)\to u\ov Rw),
$$
and let  $\lam\ph(\vv v,w)$ be
$$
\forall u(\ph(\vv v,u)\to w\ov Ru),
$$
where $u$ is some fresh variable.
The sentence $\forall w(\lam\rho\ov A(w)\to\ov A(w))$ is true in $P$ when $A\in P^+$, since $A$ is stable. Hence the sentence is true in $P^*$, giving that $\lam_{R^*}\rho_{R^*}A^*\sub A^*$, showing that $A^*\in(P^*)^+$.

Now let $\ph(\vv v,w)$ be the atomic formula $\ov S(\vv v, w)$. The sentence 
$$
\forall \vv v\forall w( \lam\rho\ov S(\vv v,w) \to \ov S(\vv v,w))
$$ 
is true in $P$ and hence in $P^*$, giving that all sections of the form $S^*[\vv x,-]$ are stable in $P^*$.
Similar arguments establish the stability of  all other sections of $S^*$ and all sections of $T^*$.
Thus $P^*$ is an $\Om$-polarity.

We will construct a bounded morphism $\al,\be\colon P^*\to (P^+)_+$ from $P^*$ to 
$$
(P^+)_+=(\fF_{P^+},\fI_{P^+},\lap,S_{P^+},T_{P^+}),
$$
the canonical structure of $P^+$. The dual 
 $((P^+)_+)^+\to (P^*)^+$ of this bounded morphism,
given by Theorem \ref{homdual},  proves to be a lattice embedding of the canonical extension $(P^+)^\sg$ of $P^+$ into the stable set lattice of $P^*$. The construction of this bounded morphism $\al,\be$ follows the methodology used in \cite[Section 3.6]{gold:vari89} for the corresponding result for standard relational structures.

For $x\in X^*$, define $$\al(x)=\{A\in P^+:x\in A^*\}.$$ 
Then $\al(x)$ is non-empty, since it contains $X$.
For $A,B\in P^+$, the sentence
$$
\forall v\big(\,\ov{A\cap B} (v) \leftrightarrow \ov A(v) \land \ov B(v)\big)
$$
is true in $P$, hence in $P^*$, making $(A\cap B)^*=A^*\cap B^*$.
 So  $A\cap B\in\al(x)$ iff $A\in\al(x)$ and $B\in\al(x)$. Thus  $\al(x)$ is a filter of $P^+$, and so $\al$ maps $X^*$ into $\fF_{P^+}$. 

For $y\in Y^*$, 
 define $$\be(y)=\{A\in P^+: y\in (\rho_R A)^*\}.$$
 In $P$ we have $\rho_R\lam_R Y=Y$ and $\rho_R(A\lor B)=\rho_R A\cap \rho_R B$, the latter because
 $$
 \rho_R(A\lor B)=\rho_R\lam_R\rho_R(A\cup B)=\rho_R(A\cup B)=\rho_R A\cap \rho_R B.
 $$
 So the sentences $\forall v(\ov{Y}(v)\to\ov{\rho_R\lam_R Y}(v))$ and
 $$
 \forall v\big(\,\ov{ \rho_R(A\lor B)} (v) \leftrightarrow \ov{\rho_R A}(v) \land \ov{\rho_R B}(v)\big)
 $$
  are true in $P$, hence in $P^*$.
 Thus  any $y\in Y^*$ has $\lam_R Y\in \be(y)$, and  $A\lor B\in\be(y)$ iff $A\in\be(y)$ and $B\in\be(y)$, i.e.\  $\be(y)$ is an ideal of $P^+$. This shows that $\be$ maps $Y^*$ into $\fI_{P^+}$. 
 
 For each $A\in P^+$, we have
 \begin{equation}  \label{rhostar}
 (\rho_R A)^* = \rho_{R^*}A^*.
\end{equation}
This follows because the sentence
$
\forall w(\ov{\rho_R A}(w) \leftrightarrow \forall v(\ov A(v) \to v \ov R w)
$
is true in $P$, hence in $P^*$.

\begin{theorem}  \label{FineThm}
The pair $\al,\be$ is a bounded morphism from $ P^*$   to $(P^+)_+$.
\end{theorem}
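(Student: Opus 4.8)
The plan is to check the eight clauses of Definition~\ref{defmorph} for the pair $\al,\be$, taking the relation ``$R$'' of the codomain to be $\lap$ on $(P^+)_+$ and using Lemma~\ref{lesub} throughout to identify the quasi-orders $\le_1,\le_2$ of $(P^+)_+$ with the inclusion orders on $\fF_{P^+}$ and $\fI_{P^+}$. Isotonicity of $\al$ is immediate: if $x\le_1 x'$ in $P^*$ and $A\in\al(x)$, then $x\in A^*$, and since $A^*$ is stable in $P^*$ it is a $\le_1$-upset by Lemma~\ref{upsets}, so $x'\in A^*$ and $A\in\al(x')$; hence $\al(x)\sub\al(x')$. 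The argument for $\be$ is the same on the $Y$-side, using that $(\rho_R A)^*=\rho_{R^*}A^*$ (equation \eqref{rhostar}) is stable in $P^*$, so also a $\le_2$-upset.

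Next I would dispatch the three ``forward'' clauses (1$_R$), (1$_S$), (1$_T$) from the elementary equivalence of $P$ and $P^*$ alone. For (1$_R$): $\al(x)\lap\be(y)$ gives an $A\in P^+$ with $x\in A^*$ and $y\in(\rho_R A)^*=\rho_{R^*}A^*$, and the latter says $A^*R^*y$, so $xR^*y$. For (1$_S$) and (1$_T$) one writes down the $\sL_P$-sentence expressing, respectively, ``$\vv a\in_\pi\vv A$ and $y\in\rho_R f_S\vv A$ imply $\vv aSy$'' and ``$x\in g_T\vv A$ and $\vv y\in_\pi\rho_R\vv A$ imply $xT\vv y$''; the first holds in $P$ because $\rho_R f_S\vv A=f^\bullet_S\vv A=\{y:(\pi\vv A)Sy\}$, the second because of the characterization \eqref{firstgT} of $g_T$. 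Both therefore hold in $P^*$, and applied to the data supplied by $\al(\vv x)S_{P^+}\be(y)$ (resp.\ $\al(x)T_{P^+}\be(\vv y)$), unwound using the definitions of $S_{P^+},T_{P^+}$ together with \eqref{rhostar}, they yield $\vv xS^*y$ (resp.\ $xT^*\vv y$).

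The four ``back'' clauses (2$_R$), (3$_R$), (2$_S$), (2$_T$) are the heart of the proof, and all follow one template: argue contrapositively, then realize a type in $P^*$ by $\om$-saturation. I would present (2$_R$) as the model case. For a filter $F\in\fF_{P^+}$ and $y\in Y^*$ one checks $\al\inv[F)_1=\bigcap_{A\in F}A^*$. Assuming not $F\lap\be(y)$, i.e.\ that every $A\in F$ has some $z\in A^*$ with not $zR^*y$, one must produce a single $x\in\bigcap_{A\in F}A^*$ with not $xR^*y$. Consider $\Gamma(v)=\{\ov A(v):A\in F\}\cup\{\neg(v\ov R\ov y)\}$ over $(P^*,\{y\})$. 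A finite subset mentions only $A_1,\dots,A_k\in F$; since $F$ is a filter, $A:=A_1\cap\dots\cap A_k\in F$, and transferring the sentence $(A_1\cap\dots\cap A_k)^*=A_1^*\cap\dots\cap A_k^*$ from $P$ shows the hypothesized witness $z\in A^*$ with not $zR^*y$ lies in each $A_j^*$, so the finite subset is satisfiable; $\om$-saturation then realizes $\Gamma$, giving the wanted $x$. Clause (3$_R$) is the order-dual: one uses an ideal $D$, the identity $\be\inv[D)_2=\bigcap_{A\in D}\rho_{R^*}A^*$, closure of $D$ under finite joins, and the stability of $A^*$ (so that $x\notin A^*=\lam_{R^*}\rho_{R^*}A^*$ produces $y\in\rho_{R^*}A^*$ with not $xR^*y$). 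Clauses (2$_S$) and (2$_T$) have the same shape but additionally invoke the ``witness-extraction'' forms of the operations: from \eqref{firstfsA} the implication ``$z\in f_S\vv A$ and not $zR^*y$ imply $\exists\vv v\big(\bigwedge_i v_i\in A_i^*\text{ and not }\vv vS^*y\big)$'' is an $\sL_P$-sentence true in $P$, hence in $P^*$; dually, from \eqref{firstgT}, ``$x\notin g_T\vv A$ implies $\exists\vv v\big(\bigwedge_i v_i\in(\rho_R A_i)^*\text{ and not }xT^*\vv v\big)$'' is true in $P$, hence in $P^*$. Feeding these into the same saturation argument---collapsing the finitely many $A$'s in each coordinate by a finite meet in $P^+$ for (2$_S$), a finite join for (2$_T$)---delivers the required tuple.

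The main obstacle is the bookkeeping concealed in (2$_S$) and (2$_T$): one must encode the semantic characterizations \eqref{firstfsA}, \eqref{firstgT} of $f_S$, $g_T$ and their negations as single first-order $\sL_P$-sentences in the symbols $\ov{A_i}$, $\ov{f_S\vv A}$ or $\ov{g_T\vv A}$, $\ov S$ or $\ov T$, and $\ov R$, so that elementary equivalence carries them to $P^*$, and then merge these with the filter/ideal arithmetic inside the finite-satisfiability step. Once the template is in place for (2$_R$), the other three back clauses are routine variations; this construction is the polarity analogue of the one in \cite[Section~3.6]{gold:vari89} for ordinary relational structures.
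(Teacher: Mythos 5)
Your proposal is correct and follows essentially the same route as the paper's proof: isotonicity via the upset property of $A^*$ and $(\rho_R A)^*$, the forward clauses (1$_R$), (1$_S$), (1$_T$) by transferring universal $\sL_P$-sentences from $P$ to $P^*$, and the back clauses (2$_R$), (3$_R$), (2$_S$), (2$_T$) by contraposition, finite satisfiability secured through closure of filters under finite meets and ideals under finite joins (plus stability of $A^*$ and the witness-extraction sentences for $f_S$, $g_T$), and then $\om$-saturation. The only differences are cosmetic reformulations (e.g.\ phrasing the (2$_S$) witness step via ``$z\in f_S\vv A$ and not $zR^*y$'' instead of ``$y\notin f_S^\bullet\vv A$''), which are equivalent to the paper's steps.
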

\begin{proof}
To show that the map $\al$ is isotone, first define
 $v\le_1 w$ to  be an abbreviation for the formula
$
\forall u(v\ov R u\to w\ov R u).
$
This formula defines the relation $\le_1$ on $X$ determined by $R$ as in \eqref{leone}, 
and the corresponding relation $\le_1^*$ on $X^*$ determined by $R^*$.
But if $A\in P^+$ then $A$ is a $\le_1$-upset, so the sentence
$\forall v\forall w(\ov A(v)\land v\le_1 w\to\ov A(w))$
is true in $P$, hence in $P^*$, showing that $A^*$ is a $\le_1^*$-upset of $X^*$. Thus if $x,x'\in X^*$ have $x\le_1^* x'$, then $A\in\al(x)$ implies $A\in\al(x')$, showing that $\al(x)\sub\al(x')$, hence  by Lemma \ref{lesub}, $\al(x)\le_1\al(x')$ as required.

A similar argument shows that as $\rho_R A$ is stable, hence a $\le_2$-upset of $Y$, $(\rho_R A)^*$ is  a $\le_2^*$-upset of $Y^*$. Thus if $y\le_2^* y'$, then  $A\in\be(y)$ implies $A\in\be(y')$, so $\be(y)\sub\be(y')$. Hence $\be$ is isotone.

Our main task is to show that $\al,\be$ satisfy the back and forth conditions  of Definition \ref{defmorph}. For (1$_R$), suppose $\al(x)\lap\be(y)$. Then there is some $A\in\al(x)\cap\be(y)$, so $x\in A^*$ and $y\in (\rho_R A)^*$. 
Hence $y\in \rho_{R^*}A^*$ by \eqref{rhostar}, so  $xR^*y$ as required for (1$_R$).

For (2$_R$), take  $F\in\fF_{P^+}$ and $y\in Y^*$. We have to show that
$\al\inv[F)_1 R^*y$ implies $F\lap\be(y)$, where $[F)_1=\{F'\in\fF_{P^+}:F\sub F'\}$. We prove the contrapositive of this implication.

 Suppose that  $F\lap\be(y)$ fails, i.e.\ $F\cap\be(y)=\emptyset$. Consider the set of formulas
 $$
 \Gamma=\{\neg (v\ov R\ov y)\}\cup\{\ov A(v):A\in F\}
 $$
in the single variable $v$, where $\ov y$ is a constant denoting y. We show $\Gamma$ is finitely satisfiable in $(P^*,y)$. Given any finite $Z\sub F$, let $A=\bigcap Z\in F$. Then by assumption $A\notin\be(y)$, so $y\notin(\rho_R A)^*$.
Hence by \eqref{rhostar} there exists an $x\in A^*$ such that not $xR^* y$. Then
for all $B\in Z$, as $A\sub B$ we get $A^*\sub B^*$ by the truth of the sentence
$\forall w(\ov{A}(w)\to\ov{B}(w))$, so  $x\in B^*$.  Thus $x$ satisfies 
$\{\neg (v\ov R\ov y)\}\cup\{\ov{B}(v):B\in Z\}$.

This proves that $\Gamma$ is finitely satisfiable in $(P^*,y)$. By saturation it follows that $\Gamma$ itself is satisfiable in $P^*$ by some $x\in\bigcap\{A^*:A\in F\}$ with not $xR^* y$. Then $F\sub\al(x)$, so $x\in \al\inv[F)_1$, and hence not $\al\inv[F)_1R^*y$, giving (2$_R$).

The proof of (3$_R$) is similar: if $x\in X^*$, $D\in \fI_{P^+}$ and not $\al(x)\lap D$, let
$$
\Delta=\{\neg (\ov x\ov Rv)\}\cup\{\ov{\rho_R A}(v):A\in D\}.
$$
For any finite  $Z\sub D$, let $A=\join Z\in D$. Then as $\al(x)\cap D=\emptyset$, $A\notin\al(x)$ and hence $x\notin A^*$. Since $A^*$ is stable in $P^*$, 
there is some $y\in Y^*$ with $y\in \rho_{R^*}(A^*)$ and not $xR^*y$. Then for all $B\in Z$, as $B\sub A$ we get  $B^*\sub A^*$, hence  $y\in \rho_{R^*}(B^*)$.
Thus $y$ satisfies $\{\neg \big(\ov x\ov Rv\big)\}\cup\{\ov{\rho_R B}(v): B\in Z\}$ in $(P^*,x)$.

This proves that $\Delta$ is finitely satisfiable in in $(P^*,x)$. By saturation it follows that $\Delta$  is satisfiable in $(P^*,x)$ by some $y\in\bigcap\{(\rho_R A)^*:A\in D\}$ with not $xR^* y$. 
Then $D\sub\be(y)$, so $y\in \be\inv[D)_2$, and hence not $xR^*\be\inv[D)_2$, giving (3$_R$).

For (1$_S$), suppose $\al(\vv x)S_{P^+}\be (y)$. We want $\vv xS^*y$. We have some $\vv A\in_\pi\al(\vv x)$ with $f_S(\vv A)\in\be(y)$. Then $A_i\in\al(x_i)$, i.e.\ $x_i\in A_i^*$, for all $i<n$, while 
$y\in (\rho_\lap f_S(\vv A))^*=(\rho_\lap\lam_\lap f_S^\bullet(\vv A))^*= (f_S^\bullet(\vv A))^*$ as $f_S^\bullet(\vv A)$ is stable. Now the sentence
$$\textstyle
\forall\vv v\forall w\big[ \ov{f_S^\bullet(\vv A)}(w)  \land      \bigwedge_{i<n}\ov A(v_i)  \to \ov S(\vv v,w)\big]
$$
is true in $P$ by definition of $f_S^\bullet(\vv A)$, hence is true in $P^*$. This implies $\vv xS^*y$, as required for (1$_S$).

For (2$_S$), we must show that  $\al\inv[\vv F)_1 S^*y$ implies $\vv F S_{P^+} \be(y)$, where $\vv F\in(\fF_{P^+})^n$ and $y\in Y^*$. Suppose that  $\vv F S_{P^+} \be(y)$ fails. Then for all $\vv A\in_\pi\vv F$, $f_S(\vv A)\notin\be(y)$ and so
$y\notin  (\rho_\lap f_S(\vv A))^*=(f_S^\bullet(\vv A))^*$. Now let
$$
\Gamma'=\{\neg \ov S(\vv v,\ov y)\}\cup\{\ov A(v_0):A\in F_0\}\cup\cdots\cup \{\ov A(v_{n-1}):A\in F_{n-1}\}.
$$
 We show $\Gamma$ is finitely satisfiable in $(P^*,y)$. As each filter $F_i$ is closed under finite intersections, it is enough to show that if $A_i\in F_i$ for all $i<n$, then the set
 $$
 \Gamma'_0=\{\neg \ov S(\vv v,\ov y)\}\cup\{\ov A_0(v_0),\dots,\ov A_{n-1}(v_{n-1})\}
 $$
 of formulas in the free variables $\vc{v}{n}$
 is satisfiable. For such $A_i$ we have $\vv A=(\vc{A}{n})\in_\pi\vv F$, so $y\notin (f_S^\bullet(\vv A))^*$ by the above.
But the sentence
$$\textstyle
\forall w\big[ \,\ov Y(w)\land \neg\ov{f_S^\bullet(\vv A)}(w) \to\exists\vv v \big(\bigwedge_{i<n}\ov A(v_i)\land\neg\ov S(\vv v,w)\big) \big]
$$
is true in $P$, hence in $P^*$, so we infer that there exist $x_i\in A_i^*$ for all $i<n$ such that not $S^*(\vv x,y)$.
Thus $\vv x$ satisfies $\Gamma_0'$ in in $(P^*,y)$.

By saturation it follows that $\Gamma'$ is satisfied by some $n$-tuple $\vv x$. Then  for all $i<n$ we  have 
$x_i\in\bigcap\{A^*:A\in F_i\}$, so $F_i\sub\al(x_i)$. Thus $\vv x\in \al\inv[\vv F)_1 $. But $\vv xS^* y$ fails, therefore so does
$\al\inv[\vv F)_1 S^*y$. This completes the proof of (2$_S$). 

The cases of (1$_T$) and  (2$_T$) are similar to the above.  For (1$_T$), suppose $\al(x)T_{P^+}\be(\vv y)$. 
We want $ xT^*\vv y$. We have
some $\vv A\in_\pi\be(\vv y)$ with $g_T(\vv  A)\in\al(x)$.  
Then $A_i\in\be(y_i)$, i.e.\ $y_i\in (\rho_R A_i)^*$, for all $i<m$, while $x\in (g_T(\vv  A))^*$.
 Now the sentence
$$\textstyle
\forall v\forall \vv w\big[ \, \ov{g_T(\vv A)}(v)  \land      \bigwedge_{i<m}\ov{\rho_R A_i}(w_i)  \to \ov T( v,\vv w)\big]
$$
is true in $P$ by definition of $g_T(\vv  A)$, hence is true in $P^*$. This implies $ xT^*\vv y$, as required for (1$_T$).

For (2$_T$), suppose that not $\al(x)T_{P^+}\vv D$, where $x\in X^*$ and $\vv D\in(\fI_{P^+})^m$. We show that not
$xT^*\be\inv[\vv D)_2$. We have for all $\vv A\in_\pi\vv D$ that $g_T(\vv A)\notin\al(x)$ and so $x\notin (g_T(\vv A))^*$.
Now let
$$
\Delta'=\{\neg\ov T(\ov x,\vv w)\}\cup \{\ov{\rho_RA}(w_0):A\in D_0\}\cup\cdots\cup \{\ov{\rho_RA}(w_{m-1}):A\in D_{m-1}\}.
$$
Take any finite sets $\vc{E}{m}$ such that $E_i\sub D_i$ for all $i<m$ We show that the finite set
$$
\Delta'_0=\{\neg\ov T(\ov x,\vv w)\}\cup \{\ov{\rho_RB}(w_0):B\in E_0\}\cup\cdots\cup \{\ov{\rho_RB}(w_{m-1}):B\in E_{m-1}\}
$$
 of formulas in the free variables $\vc{w}{m}$
is satisfiable in $(P^*,x)$. For each $i<m$, let $A_i=\join E_i\in D_i$. Then $\vv A=(\vc{A}{m-1})\in_\pi\vv D$, and so 
 $x\notin (g_T(\vv A))^*$ by the above.
But the sentence
$$\textstyle
\forall v\big[ \,\ov X(v)\land \neg\ov{g_T(\vv A)}(v) \to\exists\vv w \big(\bigwedge_{i<m}\ov{\rho_RA_i}(w_i)\land\neg\ov T( v,\vv w)\big) \big]
$$
is true in $P$, hence in $P^*$,  so we infer that there exist $y_i\in (\rho_RA_i)^*$ for all $i<m$ such that not $T^*( x,\vv y)$.
Then for each $i$, any $B\in E_i$ has $B\sub A_i$, so $\rho_R A_i\sub\rho_R B$, hence $(\rho_R A_i)^*\sub(\rho_R B)^*$ and thus $y_i\in (\rho_RB)^*$.
Thus $\vv y$ satisfies $\Delta'_0$ in $(P^*,x)$.

This proves that $\Delta'$ is finitely satisfiable in $P^*$. By saturation it follows that $\Delta'$ is satisfied in $P^*$ by some $m$-tuple $\vv y$.  Then  for all $i<m$ we  have 
$y_i\in\bigcap\{(\rho_RA)^*:A\in D_i\}$, so $D_i\sub\be(y_i)$. Thus $\vv y\in \be\inv[\vv D)_2 $. But $ xT^*\vv y$ fails, therefore so does
$xT^*\be\inv[\vv D)_2$. This completes the proof of (2$_T$). 
\end{proof}

\begin{example} \label{nonsurj}
The bounded morphism of Theorem \ref{FineThm} does not in general have surjective first component, in spite of the saturation of $P^*$. To see this, consider a $P$ having the property  $\lam_R Y=\emptyset$, i.e.\ for all $x\in X$ there exists $y\in Y$ such that not $xRy$. This is a first-order condition, so it is preserved by elementary extensions. It is also preserved by  images of bounded morphisms with surjective first component. For, if $\al,\be:P\to P'$ is any bounded morphism with surjective $\al$, and $\lam_{R'} Y' \ne \emptyset$, then by the surjectivity there must exist an element of $\al\inv\lam_{R'} Y' $, which equal to  $\lam_R(\be\inv Y')$, so $\lam_R Y\ne\emptyset$.

Now there are many $P$ satisfying  $\lam_R Y=\emptyset$ (e.g.\  any  with polarity of the form $(X,X,\ne)$). For such $P$, if there was \emph{any} bounded morphism $P^*\to (P^+)_+$ with surjective $\al$, then $(P^+)_+$ would satisfy the preserved condition. But that is not so, as  no canonical structure $\LL_+$   has $\lam_\lap\fI_\LL=\emptyset$. This is because 
$\fF_\LL$ contains the  filter $\LL$ which intersects every ideal of $\LL$, so $\LL\in\lam_\lap\fI_\LL$.

Thus if $P$ has $\lam_R Y=\emptyset$, then there is no bounded morphism $P^*\to (P^+)_+$ with surjective $\al$, where $P^*$ is any elementary extension of $P$.
\end{example}

\section{Maximal covering morphisms}

In the case of Kripke frames, for Boolean modal logics or distributive substructural logics, the corresponding version of Theorem \ref{FineThm} produces a bounded morphism that \emph{is} surjective. The proof depends on the points of $(P^+)_+$ being \emph{prime} filters  \cite[3.6]{gold:vari89}.
But here, in dealing with possibly non-distributive lattices, we  admit arbitrary filters as points in canonical structures.

In the lattice representations developed by Urquhart \cite{urqu:topo78} and Hartung \cite{hart:topo92}, the points of representing spaces are filters or ideals, or filter-ideal pairs, that have certain mutual maximality properties. As these papers point out, this does not lead to a good duality construction for lattice homomorphisms, because the preimages of maximal filters under lattice homomorphisms need not be maximal. Here we have seen in Theorem \ref{morphdual} that admitting arbitrary filters leads to a notion of dual morphism for lattice homomorphisms that has good properties.

Now surjective bounded morphisms are logically important because  they preserve validity of formulas in a semantics based on the structures involved. Typically, the validity of a formula in $P$ will be equivalent to the satisfaction of some corresponding equation by the algebra $P^+$. So the preservation of formula validity in passing from $P$ to $P'$ will be secured if equation satisfaction is preserved in passing from $P^+$ to $(P')^+$. This will hold if $(P')^+$ is isomorphic to a subalgebra of $P^+$. That will in turn hold if the dual $(\al,\be)^+$ of some bounded morphism $\al,\be:P\to P'$ is injective. For this it suffices, by Theorem \ref{homdual}, that $\al$ be surjective.
But when $P'$ is a canonical structure, we can ensure this injectivity of $(\al,\be)^+$ by a condition that is weaker than the surjectivity of $\al$.

To define this condition, first define a filter $F$ of a lattice $\LL$ to be \emph{$D$-maximal}, where $D$ is an ideal of $\LL$, if $F$ is maximal in the set of all filters disjoint from $D$.
Call $F$ \emph{i-maximal} if it is $D$-maximal for some ideal $D$.
Now define a \emph{maximal covering  morphism} to be a bounded morphism $\al,\be:P\to\LL_+$ into some canonical structure such that  the image $\al[X]$ of the map 
$\al:X\to\fF_\LL$ includes all i-maximal filters of $\LL$. This  condition holds immediately if $\al$ is surjective. 

\begin{theorem}  \label{presinj}
If  $\al,\be:P\to\LL_+$ is a maximal covering   morphism, then the $\Om$-lattice  homomorphism 
 $(\al,\be)^+\colon (\LL_+)^+\to P^+$ is injective.
\end{theorem}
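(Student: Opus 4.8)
The plan is to unwind what injectivity of $(\al,\be)^+$ means and reduce it to a statement about stable subsets of $\fF_\LL$. By Theorem~\ref{homdual} the map $(\al,\be)^+$ sends $A\in(\LL_+)^+$ to $\al\inv A=\{x\in X:\al(x)\in A\}$, so $(\al,\be)^+A=(\al,\be)^+B$ says exactly that $A$ and $B$ contain the same members of the image set $\al[X]\sub\fF_\LL$. Hence it suffices to show that distinct $A,B\in(\LL_+)^+$ must differ on some filter lying in $\al[X]$; and since $\al,\be$ is maximal covering, $\al[X]$ contains every i-maximal filter of $\LL$, so it is enough to exhibit an i-maximal filter belonging to exactly one of $A$ and $B$.

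So suppose, without loss of generality, that there is some $F\in A\setminus B$. First I would exploit stability of $B$: from $F\notin B=\lam_\lap\rho_\lap B$ together with the definitions of $\lam_\lap$ and $\rho_\lap$ in $\LL_+$, there is an ideal $D\in\fI_\LL$ with $D\in\rho_\lap B$ (so $D$ meets every filter in $B$) and not $F\lap D$ (so $F\cap D=\emptyset$). Then I would invoke Zorn's Lemma on the collection of filters that contain $F$ and are disjoint from $D$: this collection is non-empty (it contains $F$) and unions of chains in it are again filters containing $F$ and disjoint from $D$, so it has a maximal element $F^*$. By construction $F^*$ is $D$-maximal, hence i-maximal, so $F^*\in\al[X]$; fix $x\in X$ with $\al(x)=F^*$.

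It remains to verify $F^*\in A$ and $F^*\notin B$. For the first: $A$ is stable, hence a $\le_1$-upset by Lemma~\ref{upsets}, while in the canonical structure $\LL_+$ the relation $\le_1$ coincides with inclusion of filters by Lemma~\ref{lesub}; since $F\in A$ and $F\sub F^*$ this gives $F^*\in A$. For the second: $F^*\cap D=\emptyset$ whereas every member of $B$ meets $D$, so $F^*\notin B$. Consequently $\al(x)\in A$ and $\al(x)\notin B$, i.e.\ $x\in\al\inv A\setminus\al\inv B$, so $(\al,\be)^+A\neq(\al,\be)^+B$, establishing injectivity.

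The step I expect to carry the weight is the passage from an arbitrary witness $F\in A\setminus B$ to an \emph{i-maximal} witness $F^*$: the Zorn's Lemma extension produces an i-maximal filter above $F$ that is still disjoint from $D$ and hence still outside $B$, and the crucial reason this enlargement does not escape $A$ is that stable sets are $\le_1$-upsets (Lemma~\ref{upsets}) and $\le_1$ is just $\sub$ on the canonical structure (Lemma~\ref{lesub}). Everything else is a routine unwinding of the definitions of $\lam_\lap$, $\rho_\lap$ and of $(\al,\be)^+$.
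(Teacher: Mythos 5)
Your proposal is correct and follows essentially the same route as the paper's own proof: extract a witness $F\in A\setminus B$, use stability of $B$ to obtain an ideal $D\in\rho_\lap B$ disjoint from $F$, apply Zorn's Lemma to obtain a $D$-maximal (hence i-maximal) extension $F^*$ that is still in $A$ because stable sets are $\sub$-upsets (Lemmas \ref{upsets} and \ref{lesub}) and still outside $B$, and then invoke the maximal covering hypothesis to realise $F^*$ as $\al(x)$. No gaps; the argument matches the paper's step for step.
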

\begin{proof}
 $(\al,\be)^+$ is the map $A\mapsto \al\inv A$.
 Suppose that $A$ and $B$ are stable subsets of $\fF_{\LL}$ in $\LL_+$, with $A\ne B$. Then, say, $A\nsubseteq B$, and there is some $F\in A$ with $F\notin B$. By stability of $B$ there is some $D\in \rho_\lap B$ such that not $F\lap D$, hence $F\cap D=\emptyset$. By Zorn's Lemma, $F$ can be extended to an $F'\in\fF_{\LL}$ that is $D$-maximal. By Lemmas \ref{upsets} and \ref{lesub}, $A$ is a $\sub$-upset, so we get $F'\in A$. Since $D\in \rho_\lap B$ and $F'\cap D=\emptyset$, we have
 $F'\notin\lam_\lap\rho_\lap B=B$.
  Since $F'$ is i-maximal and $\al,\be$ is maximal covering , there exists an $x\in X$ such that $\al(x)=F'$. So $x\in\al\inv A\setminus\al\inv B$. Hence $ \al\inv A\ne\al\inv B$. Thus  $(\al,\be)^+$ is injective.
\end{proof}

\begin{theorem}  \label{preserve}
The bounded morphism $\al,\be\colon P^*\to (P^+)_+ $ of Theorem \ref{FineThm} is maximal covering.
Hence there is  an $\Om$-lattice  monomorphism
$(P^+)^\sg\rightarrowtail (P^*)^+$.
\end{theorem}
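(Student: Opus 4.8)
The plan is to verify the maximal covering condition directly from the $\om$-saturation of $P^*$, and then read off the monomorphism from Theorem \ref{presinj}. So let $F$ be an i-maximal filter of $P^+$, say $F$ is $D$-maximal for an ideal $D$ of $P^+$; the task is to produce some $x\in X^*$ with $\al(x)=F$. Since $\al(x)=\{A\in P^+:x\in A^*\}$, the natural move is to realise in $P^*$ the type
$$
\Gamma=\{\ov A(v):A\in F\}\cup\{\neg\ov C(v):C\in D\}.
$$
If $x\in X^*$ realises $\Gamma$, then $F\sub\al(x)$, while $\al(x)\cap D=\emptyset$ (a $B\in\al(x)\cap D$ would give $x\in B^*$, contradicting $\neg\ov B(v)\in\Gamma$), so $\al(x)$ is a filter disjoint from $D$ and containing $F$; as $F$ is \emph{maximal} among such filters, $\al(x)=F$. (Note $X=1\in F$, so $\ov X(v)\in\Gamma$ and any realisation lies in $X^*$.)

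The work is to show $\Gamma$ is finitely satisfiable in $P^*$. Given $A_0,\dots,A_{k-1}\in F$ and $C_0,\dots,C_{l-1}\in D$, put $A=A_0\wedge\cdots\wedge A_{k-1}\in F$ and $C=C_0\vee\cdots\vee C_{l-1}\in D$, the meet and join being taken in $P^+$. Since $A\in F$, $C\in D$ and $F$ is disjoint from $D$, we cannot have $A\leq C$ in $P^+$ — that would force $C\in F$. Hence $A\nsubseteq C$ as subsets of $X$, and we may pick $x\in A\setminus C$. Then $x\in A_i$ for all $i<k$ (as $A\sub A_i$) and $x\notin C_j$ for all $j<l$ (as $C_j\sub C$), so $x$ satisfies this finite part of $\Gamma$ in $P$; the corresponding existential $\sL_P$-sentence then holds in $P^*$ by elementarity. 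Thus $\Gamma$ is finitely satisfiable in $P^*$, and by $\om$-saturation it is realised there. As $F$ was an arbitrary i-maximal filter, $\al[X^*]$ contains all i-maximal filters of $P^+$, i.e.\ the bounded morphism $\al,\be\colon P^*\to(P^+)_+$ of Theorem \ref{FineThm} is maximal covering.

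The final assertion is then immediate: by Theorem \ref{presinj} the $\Om$-lattice homomorphism $(\al,\be)^+\colon((P^+)_+)^+\to(P^*)^+$ furnished by Theorem \ref{homdual} is injective, hence a monomorphism, and by Theorem \ref{Lplusplus} its domain $((P^+)_+)^+$ is the canonical extension $(P^+)^\sg$ of the $\Om$-lattice $P^+$; so $(\al,\be)^+$ is the desired $\Om$-lattice monomorphism $(P^+)^\sg\mono(P^*)^+$.

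The point requiring care is the choice of $\Gamma$. Trying to realise $\{\ov A(v):A\in F\}\cup\{\neg\ov A(v):A\notin F\}$ directly fails: its finite satisfiability would require $A\nsubseteq B_0\cup\cdots\cup B_{l-1}$ whenever $A\in F$ and $B_0,\dots,B_{l-1}\notin F$, and without distributivity one may have $A\leq B_0\vee\cdots\vee B_{l-1}$ with no $B_j$ in $F$, since filters of $P^+$ need not be prime. Demanding disjointness only from the ideal $D$ sidesteps this, because the obstruction ``$A\leq C$ with $A\in F$'' is ruled out cleanly by $F\cap D=\emptyset$; the $D$-maximality of $F$ then supplies the extra step that turns ``$F\sub\al(x)$ and $\al(x)\cap D=\emptyset$'' into ``$\al(x)=F$''. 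This is precisely the divergence from the modal/distributive case flagged in the introduction and illustrated in Example \ref{nonsurj}, where $\al$ itself need not be surjective.
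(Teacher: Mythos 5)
Your proposal is correct and follows essentially the same route as the paper: the same type $\Gamma=\{\ov A(v):A\in F\}\cup\{\neg\ov B(v):B\in D\}$, the same finite-satisfiability argument via the meet of the chosen members of $F$ and the join of those of $D$ (ruling out $A\leq C$ by disjointness of $F$ and $D$), transfer to $P^*$ by elementarity, realisation by $\om$-saturation, and the $D$-maximality of $F$ forcing $\al(x)=F$, with the monomorphism then read off from Theorems \ref{presinj}, \ref{homdual} and \ref{Lplusplus}. No gaps; your closing remark about why one cannot realise a "complete" type and must instead use $D$-maximality accurately reflects the divergence from the modal case that the paper emphasises.
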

\begin{proof}
Let $F\in\fF_{P^+}$ be i-maximal. Then there is some ideal $D\in\fI_{P^+}$ such that $F$ is $D$-maximal.
Now let
 $$
 \Gamma = \{\ov A(v):A\in F\}\cup\{\neg\ov B(v):B\in D\}.
 $$
 We show that $\Gamma$ is finitely satisfiable in $P^*$. Given finite sets
 $G\sub F$ and $E\sub D$, put
 $A=\bigcap G\in F$ and $B= \join E\in D$. If $A\sub B$, then $B\in F$ (and $A\in D$), contradicting $F\cap D=\emptyset$. So there must be some $x\in X$ with $x\in A\setminus B$. Hence 
 $x\in A'\setminus B'$ for all $A'\in G$ and $B'\in E$.
 So $x$ satisfies $ \{\ov{A'}(v):A'\in G\}\cup\{ \neg\ov{B'}(v):B'\in E\}$ in $P$.
 
This shows that each finite subset of $\Gamma$ is satisfiable in $P$, hence is satisfiable in its elementary extension $P^*$.
By saturation it follows that $\Gamma$  is satisfiable in $P^*$ by some $x\in\bigcap\{ A^*:A\in F\}$, with $x\notin B^*$ for all $B\in D$. Thus the filter $\al(x)$ includes $F$ and is disjoint from $D$. The maximality of $F$ in this respect implies that 
$\al(x)=F$. Thus the image of $\al$ includes all i-maximal filters of $P^+$, as required for the morphism to be maximal covering.

It follows from Theorem \ref{presinj} that the  $\Om$-lattice homomorphism  
$$
(\al,\be)^+\colon ((P^+)_+)^+\to (P^*)^+
$$
given by Theorem \ref{homdual}  is injective. But $((P^+)_+)^+=(P^+)^\sg$.
\end{proof}

We can strengthen this construction as follows.

\begin{corollary}  \label{maxboundsurj}
Let $\al_1,\be_1 \colon P\to P_1$ be a bounded morphism with $\al_1\colon X\to X_1$ being surjective. Then there is a maximal covering  morphism from  $P^*$ to $(P_1^+)_+$.
\end{corollary}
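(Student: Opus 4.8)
The plan is to route the required morphism through the canonical structure of $P^+$, using the bounded morphism of Theorem \ref{preserve} together with the duality for injective homomorphisms. Since $\al_1\colon X\to X_1$ is surjective, Theorem \ref{homdual} makes $\thet:=(\al_1,\be_1)^+\colon P_1^+\to P^+$ an injective $\Om$-lattice homomorphism, hence a lattice embedding, so it is order invariant. Applying Theorem \ref{morphdual} to this injective $\thet$ produces a bounded morphism $\thet_+=(\al_\thet,\be_\thet)\colon (P^+)_+\to (P_1^+)_+$ with $\al_\thet\colon\fF_{P^+}\to\fF_{P_1^+}$ and $\be_\thet\colon\fI_{P^+}\to\fI_{P_1^+}$ both surjective, where $\al_\thet F=\thet\inv F$. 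Composing with the maximal covering morphism $\al,\be\colon P^*\to (P^+)_+$ of Theorem \ref{preserve} via Lemma \ref{compmorph} gives a bounded morphism $(\al_\thet\circ\al,\be_\thet\circ\be)\colon P^*\to (P_1^+)_+$, and the only thing left to check is that its first component covers all i-maximal filters of $P_1^+$.

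The crux is the following lemma: if $\thet\colon\A\to\B$ is an injective lattice homomorphism and $F_A$ is an i-maximal filter of $\A$, then $\B$ has an i-maximal filter $F_B$ with $\thet\inv F_B=F_A$. To prove it, fix an ideal $D_A$ of $\A$ witnessing that $F_A$ is $D_A$-maximal, let $G_B$ be the filter of $\B$ generated by $\thet[F_A]$ and $D_B$ the ideal of $\B$ generated by $\thet[D_A]$. Since $\thet$ preserves finite meets (resp.\ joins), $G_B=\{b:\exists a\in F_A,\ \thet(a)\leq b\}$ and $D_B=\{b:\exists a\in D_A,\ b\leq\thet(a)\}$; order invariance of $\thet$ gives $\thet\inv G_B=F_A$ exactly as in the injective case of the proof of Theorem \ref{morphdual}, and if $c\in G_B\cap D_B$ then $\thet(a_1)\leq c\leq\thet(a_2)$ with $a_1\in F_A$, $a_2\in D_A$ forces $a_1\leq a_2$, hence $a_2\in F_A\cap D_A=\emptyset$, so $G_B\cap D_B=\emptyset$. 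By Zorn's Lemma extend $G_B$ to a $D_B$-maximal filter $F_B$. Then $F_A=\thet\inv G_B\sub\thet\inv F_B$; conversely, if $b\in\thet\inv F_B\setminus F_A$, then $D_A$-maximality of $F_A$ yields $a\in F_A$ with $a\land b\in D_A$, whence $\thet(a)\land\thet(b)=\thet(a\land b)\in D_B$ while $\thet(a),\thet(b)\in F_B$, contradicting $F_B\cap D_B=\emptyset$. Thus $\thet\inv F_B=F_A$ and $F_B$ is i-maximal.

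Granting the lemma, the corollary is immediate: given an i-maximal filter $F_1$ of $P_1^+$, the lemma applied to $\thet=(\al_1,\be_1)^+$ supplies an i-maximal filter $F$ of $P^+$ with $\al_\thet(F)=\thet\inv F=F_1$; since $\al,\be$ is maximal covering there is $x\in X^*$ with $\al(x)=F$, and then $(\al_\thet\circ\al)(x)=\al_\thet(F)=F_1$. Hence $\al_\thet\circ\al$ covers every i-maximal filter of $P_1^+$, so $(\al_\thet\circ\al,\be_\thet\circ\be)$ is a maximal covering morphism from $P^*$ to $(P_1^+)_+$. The main obstacle is the lemma — precisely the verification that a $D_B$-maximal extension of $G_B$ does not pull back to anything strictly larger than $F_A$, which is where the $D_A$-maximality of $F_A$ (rather than its mere membership in a canonical structure) is essential. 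The remaining steps are bookkeeping with results already in hand: Theorems \ref{homdual}, \ref{morphdual}, \ref{preserve} and Lemma \ref{compmorph}.
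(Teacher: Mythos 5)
Your proof is correct, and while it uses the same composite morphism as the paper --- the maximal covering morphism $\al,\be\colon P^*\to (P^+)_+$ of Theorem \ref{preserve} followed by the double dual $(\al_\thet,\be_\thet)$ of $\al_1,\be_1$ --- it establishes the maximal covering property by a genuinely different argument. The paper does not pass through i-maximal filters of $P^+$ at all: it re-runs the saturation argument of Theorem \ref{preserve} with the set of formulas $\{\ov{\al_1\inv A}(v):A\in F\}\cup\{\neg\ov{\al_1\inv B}(v):B\in D\}$, using surjectivity of $\al_1$ only to pull finite witnesses from $X_1$ back to $X$, and then concludes $\al_\thet(\al(x))=F$ from $D$-maximality of $F$; in that argument $\al(x)$ itself need not be i-maximal in $P^+$. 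You instead treat Theorem \ref{preserve} as a black box and supply the missing ingredient as a purely order-theoretic lemma: along an injective (hence order-invariant) lattice homomorphism $\thet\colon\A\mono\B$, every i-maximal filter of $\A$ is $\thet\inv F_B$ for some i-maximal filter $F_B$ of $\B$, obtained by generating a filter from $\thet[F_A]$ and an ideal from $\thet[D_A]$, checking disjointness via order invariance, extending by Zorn's Lemma, and using $D_A$-maximality of $F_A$ to show the preimage does not grow --- and you correctly identify that mere surjectivity of $\al_\thet$ (Theorem \ref{morphdual}) would not suffice here, since an arbitrary $\al_\thet$-preimage of an i-maximal filter need not be i-maximal nor in the image of $\al$. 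The paper's route buys uniformity with the proof of Theorem \ref{preserve} and avoids any new lemma; yours buys modularity (no further model-theoretic work beyond Theorem \ref{preserve}) and isolates a reusable fact about lifting maximally disjoint filter--ideal pairs along lattice embeddings, which in effect shows that composing any maximal covering morphism with the dual of an injective $\Om$-homomorphism is again maximal covering.
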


\begin{proof}
We have the situation
$$
\xymatrix{ {P^*}   \ar[r]^-{\al,\be}  & { (P^+)_+ } \ar[r]^-{\al_2,\be_2} &  (P_1^+)_+},
$$
where $\al,\be$ is the maximal covering  morphism of Theorem \ref{preserve},
and  $\al_2,\be_2 $ is the double dual of $\al_1,\be_1$, i.e.\ $\al_2=(\al_{1}^+)_+$
and $\be_2=(\be_{1}^+)_+$.
In particular, $\al_2(G)=\{A\in P_1^+:\al_1\inv A\in G\}$ for any filter $G$ of $P^+$.
The composite pair $\al_2\circ\al,\be_2\circ\be$ is a bounded morphism  $P^* \to (P_1^+)_+$, so it suffices to show that it is maximal covering .
We adapt and extend the argument of Theorem \ref{preserve}. Let
$F\in\fF_{P_1^+}$ be D-maximal, where $D\in\fI_{P_1^+}$.
Put
 $$
 \Gamma = \{\ov{\al_1\inv A}(v):A\in F\}\cup\{\neg\ov{\al_1\inv B}(v):B\in D\}.
 $$
For any finite sets
 $G\sub F$ and $E\sub D$, as in the proof of Theorem \ref{preserve} there is  some $z\in X_1$ with  $z\in A\setminus B$ for all $A\in G$ and $B\in E$. As $\al_1$ is surjective, there exists $x\in X$ with $\al_1(x)=z$, so
  $x\in \al_1\inv A\setminus \al_1\inv B$ for all $A\in G$ and $B\in E$.
 So $x$ satisfies $ \{\ov{\al_1\inv A}(v):A\in G\}\cup\{\neg\ov{\al_1\inv B}(v):B\in E\}$ in $P$, hence in $P^*$.
 
 By saturation it follows that $X^*$ has a point $x$ that satisfies $\Gamma$, so belongs to $(\al_1\inv A)^*$ for all $A\in F$ but not to
 $(\al_1\inv B)^*$ for any $B\in D$. Thus $\al(x)$ includes  $\{\al_1\inv A:A\in F\}$ and is disjoint from
$\{\al_1\inv B:B\in D\}$.
But  $\al_2(\al(x))=\{A\in P_1^+:\al_1\inv A\in \al(x)\}$, which includes $F$ and is disjoint from $D$. Hence
 $\al_2(\al(x))=F$ by $D$-maximality of $F$.
 
 This proves that the image of $\al_2\circ \al$ includes all i-maximal filters of $P_1^+$, as required.
\end{proof}

There is a further significant corollary to Theorem \ref{preserve}, which follows from the fact that saturated models can be obtained as ultrapowers. By the theory of \cite[Section 6.1]{chan:mode73}, for any $P$ there is an ultrafilter $U$ such that $P^U$ is $\om$-saturated, where $P^U$ is the ultrapower of $P$ modulo $U$. We take $P^U$ as $P^*$ in Theorem \ref{preserve} and Corollary \ref{maxboundsurj}:

\begin{corollary}  \label{A3}
For any\/  $\Om$-polarity $P$ and bounded morphism $\al_1,\be_1 \colon P\to P_1$ with $\al_1$ surjective,
there is an ultrafilter $U$ such that there is a maximal covering  morphism $P^U\to (P_1^+)_+$  and 
an\/ $\Om$-lattice  monomorphism
$(P_1^+)^\sg\rightarrowtail (P^U)^+$.
\qed
\end{corollary}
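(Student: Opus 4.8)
The plan is to obtain the structure $P^*$ appearing in Theorem \ref{preserve} and Corollary \ref{maxboundsurj} as an ultrapower, and then simply quote those results. First I would apply the standard construction of saturated ultrapowers (e.g.\ \cite[Section 6.1]{chan:mode73}): working with $P$ regarded as an $\sL_P$-structure, there is an ultrafilter $U$ over a suitable index set such that the ultrapower $P^U$ is $\om$-saturated. By the fundamental theorem on ultraproducts ({\L}o\'s's theorem), $P^U$ is an elementary extension of $P$, so by the analysis preceding Theorem \ref{FineThm} it is again an $\Om$-polarity, and hence it is admissible as the structure ``$P^*$'' used from Section \ref{secbmorph} onward.

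With this $U$ fixed, I would invoke Corollary \ref{maxboundsurj}, taking $P^*=P^U$: since $\al_1,\be_1\colon P\to P_1$ is a bounded morphism whose first component $\al_1$ is surjective, the corollary delivers a maximal covering morphism $\al,\be\colon P^U\to (P_1^+)_+$. This is the first assertion of the corollary.

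For the second assertion I would feed that maximal covering morphism into Theorem \ref{presinj}, applied with $\LL=P_1^+$: it tells us that the dual $\Om$-lattice homomorphism $(\al,\be)^+\colon ((P_1^+)_+)^+\to (P^U)^+$ is injective. By Theorem \ref{Lplusplus} we have $((P_1^+)_+)^+=(P_1^+)^\sg$, so $(\al,\be)^+$ is exactly the desired $\Om$-lattice monomorphism $(P_1^+)^\sg\rightarrowtail (P^U)^+$, which finishes the proof.

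The content here is entirely a matter of assembling earlier results, so there is no genuine obstacle; the only point that needs a moment's care is that the ultrapower must be taken of the \emph{expanded} structure carrying the unary predicates $\ov A$ for $A\in P^+$, so that the $\om$-saturation supplied by \cite{chan:mode73} is $\om$-saturation with respect to the signature $\sL_P$ that is actually used in Theorem \ref{FineThm}. Since $\om$-saturation only concerns types over finite parameter sets, the (possibly large) cardinality of $\sL_P$ causes no difficulty, and the cited theory applies verbatim.
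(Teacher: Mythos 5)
Your proposal is correct and is essentially the paper's own argument: the paper likewise takes an $\om$-saturated ultrapower $P^U$ (citing Chang--Keisler, Section 6.1), substitutes it for $P^*$ in Corollary \ref{maxboundsurj} to get the maximal covering morphism $P^U\to(P_1^+)_+$, and obtains the monomorphism from Theorem \ref{presinj} together with the identification $((P_1^+)_+)^+=(P_1^+)^\sg$. One small caveat: your parenthetical justification that the large cardinality of $\sL_P$ is harmless ``because $\om$-saturation only concerns finite parameter sets'' is not quite the right reason --- for an uncountable signature one needs the good-ultrafilter theorem of that same section of \cite{chan:mode73} rather than the simple countably-incomplete argument --- but the cited theory does supply the required ultrafilter, so the proof stands.
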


In \cite{gold:defi18}, this corollary with $P_1=P$ was obtained for a polarity $P$ for which $P^+$ has additional $\Om$-indexed complete normal operators and dual operators  that are all first-order definable over $P$ in the sense indicated in \eqref{firstfsA} and  \eqref{firstgT} for
the operations $f_S$ and $g_T$.
The method used in \cite{gold:defi18} was focused  on the algebraic side of the duality between algebras and structures.  It showed that $(P^U)^+$ is a MacNeille completion of the ultrapower algebra $(P^+)^U$ and then appealled to a result from 
\cite{gehr:macn06} stating that there is an embedding of $(P^+)^\sg$ into the MacNeille completion of $(P^+)^U$ for a suitable $U$ that has $(P^+)^U$ sufficiently saturated.

Corollary \ref{A3} gives one of the properties that define the notion of a \emph{canonicity framework}, as introduced in \cite{gold:cano18}. Such a framework describes a set of relationships between a class $\Sigma$ of structures and a \emph{variety}, 
 i.e.\ equationally definable class, $\CC$ of algebras equipped with operations  $(-)^\sg\colon\CC\to\CC$ and $(-)^+\colon\Sigma\to\CC$, that are sufficient to ensure that the following holds.
\begin{itemize}
\item[(\ddag)]
if $\cS$ is any subclass of $\Sigma$ that is closed under ultraproducts, then
the variety of algebras generated by  $\cS^+=\{P^+:P\in\cS\}$ is closed under the  operation $(-)^\sg$.
\end{itemize}
This provides  an axiomatic formulation of a result about the generation of varieties closed under canonical extensions that was first proven in \cite{gold:vari89} for Boolean algebras with operators, and which was itself an algebraic generalisation of a theorem of Fine \cite{fine:conn75} stating that a first-order definable class of Kripke frames characterises a modal logic that is valid in its canonical frames.

A canonicity framework can be formed  by taking $\Sigma$ to be the class of $\Om$-polarities, $\CC$ to be the variety of 
$\onlo$'s, $\LL^\sg$ to be the canonical extension \eqref{Lsigom}, and $P^+$ the stable set lattice \eqref{PplusOm}.
Hence the conclusion of (\ddag) holds if $\cS$ is any class of $\Om$-polarities that is closed under ultraproducts.

\section{Goldblatt-Thomason theorem}

This theorem \cite[Theorem 8]{gold:axio75} was originally formulated  as an answer to the question: which first-order definable properties of a binary relation can be expressed by modal axioms? It gave structural closure conditions on a first-order definable class of Kripke frames that are necessary and sufficient for that class to be the class of all frames that validate the theorems of some propositional modal logic. In this section we will derive two results of this kind in the present setting of polarity-based structures.

Now a Kripke frame validates a particular modal formula iff the dual algebra of the frame satisfies some modal algebraic equation \cite[Prop.~5.24]{blac:moda01}. Hence there is a correspondence between modal logics and varieties  of modal algebras \cite[Theorem~5.27]{blac:moda01}. Accordingly, the kind of result we seek is one that characterises when a class of $\Om$-polarities is the class of all such structures whose dual algebras $P^+$ belong to some variety.
If $\V$ is a variety of $\Om$-lattices, let
$$
\cS_\V=\{P:P^+\in\V\}
$$
be the class of all  $\Om$-polarities whose  stable set lattice belongs to $\V$.  We will  give structural closure conditions on  a class $\cS$ of structures that characterise when it is of the form $\cS_\V$.

We say that $\cS$ is \emph{closed under direct sums} if, whenever  $\{P_j:j\in J\}\sub \cS$, then $\sum_J P_j\in\cS$.
$\cS$ is \emph{closed under inner substructures} if, whenever $P'\in\cS$ and  $P$ is an inner substructure of $P'$, then  $P\in\cS$.
$\cS$ is \emph{closed under images  of surjective morphisms} when, for any bounded morphism $\al,\be\colon P\to P'$ with $\al$ surjective, if  $P\in\cS$, then  $P'\in\cS$.
$\cS$ is \emph{closed under codomains of maximal covering  morphisms} if, whenever  there is an maximal covering  morphism from $P$ to $\LL_+$ and  $P\in\cS$, then  $\LL_+\in\cS$. 
  $\cS$  \emph{reflects canonical extensions} if $P\in\cS$ whenever
$(P^+)_+\in\cS$ (equivalently, if the complement of $\cS$ is closed under canonical extensions).

\begin{lemma} \label{maxtosurj}
Suppose that  $\cS$ reflects canonical extensions.
\begin{enumerate}[\rm(1)]
\item
If $\cS$ is closed under canonical extensions and  codomains of  maximal covering   morphisms, then it is closed under images of  surjective  morphisms.
\item
If $\cS$ is closed under ultrapowers and  codomains of  maximal covering   morphisms, then it is closed under images of  surjective  morphisms.
\end{enumerate}
\end{lemma}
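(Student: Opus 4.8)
The plan is to prove both parts by combining a surjective bounded morphism $\al,\be\colon P\to P'$ with the maximal covering morphism into a canonical structure furnished by Theorem~\ref{preserve} and Corollary~\ref{maxboundsurj}, and then reading off membership in $\cS$ from the hypotheses. Throughout I would work with the canonical structures $(P^+)_+$, $((P')^+)_+$ etc., and use the fact that reflecting canonical extensions lets one pass from $(P')^+\,_+\in\cS$ back to $P'\in\cS$.

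\begin{proof}
Suppose $\al,\be\colon P\to P'$ is a bounded morphism with $\al$ surjective, and $P\in\cS$. By Corollary~\ref{maxboundsurj} (with $P_1=P'$ and $\al_1,\be_1=\al,\be$), there is a maximal covering morphism $P^*\to ((P')^+)_+$, where $P^*$ is an $\om$-saturated elementary extension of $P$; by Corollary~\ref{A3} we may take $P^*$ to be an ultrapower $P^U$ of $P$.

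For part (1): since $\cS$ is closed under canonical extensions and $P\in\cS$, we have $(P^+)_+\in\cS$. Now I would need $P^*\in\cS$ as the domain of the maximal covering morphism into $((P')^+)_+$. The key observation is that the maximal covering morphism $\al_0,\be_0\colon P^*\to (P^+)_+$ of Theorem~\ref{preserve} (taking $P_1=P$, $\al_1=\id$ in Corollary~\ref{maxboundsurj}), being maximal covering, is already enough: running the same argument with the identity on $P$ shows the composite $P^*\to ((P')^+)_+$ is maximal covering, and closure under codomains of maximal covering morphisms then gives $((P')^+)_+\in\cS$ provided $P^*\in\cS$. But here is where I instead route through canonical extensions directly: there is a surjective bounded morphism from a suitable elementary extension onto $(P^+)_+$ in the modal case, but in the polarity setting we only have a maximal covering morphism $P^*\to(P^+)_+$, and $(P^+)_+\in\cS$ by closure under canonical extensions --- this does not immediately put $P^*$ in $\cS$. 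The correct route is: apply Corollary~\ref{maxboundsurj} to the surjection $\al,\be\colon P\to P'$ to get a maximal covering morphism $P^U\to((P')^+)_+$. Since $P\in\cS$ and (for part (1)) $\cS$ need only be shown to contain $P^U$; this follows because $\cS$ is assumed closed under canonical extensions in part (1), and more to the point $(P^+)_+\in\cS$, while there is a maximal covering morphism $P^U\to(P^+)_+$, so reflecting canonical extensions applied after observing $((P^U)^+)_+$ --- hmm. Let me state the clean argument: by closure under canonical extensions $P\in\cS$ gives $(P^+)_+\in\cS$; the maximal covering morphism $P^U\to(P^+)_+$ has codomain $(P^+)_+\in\cS$ but we need the \emph{domain}. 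Instead: $(P^+)_+\in\cS$, and $((P^+)_+\,^+)_+ = ((P^+)^\sg)_+\in\cS$ by closure under canonical extensions again. Theorem~\ref{preserve} applied to $(P^+)_+$ in place of $P$ gives a maximal covering morphism $((P^+)_+)^*\to\cdots$; this is getting circular. The intended proof, I believe, is: since $\cS$ reflects canonical extensions, to show $P'\in\cS$ it suffices to show $((P')^+)_+\in\cS$; by Corollary~\ref{maxboundsurj} there is a maximal covering morphism $P^U\to((P')^+)_+$; so by closure under codomains of maximal covering morphisms it suffices that $P^U\in\cS$; and $P^U=P^*\in\cS$ because (part 1) $\cS$ is closed under canonical extensions --- no. For part (2) $P^U\in\cS$ because $\cS$ is closed under ultrapowers; for part (1) $P^U\in\cS$ because there is a maximal covering morphism $P^U\to(P^+)_+$ and ... this still needs $(P^+)_+$ as domain.

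I will therefore adopt the following structure, which is the one the hypotheses are tailored for. For part (2): $\cS$ closed under ultrapowers gives $P^U\in\cS$; Corollary~\ref{maxboundsurj} gives a maximal covering morphism $P^U\to((P')^+)_+$; closure under codomains of maximal covering morphisms gives $((P')^+)_+\in\cS$; reflecting canonical extensions gives $P'\in\cS$. For part (1): $\cS$ closed under canonical extensions gives $(P^+)_+\in\cS$; Theorem~\ref{preserve} gives a maximal covering morphism $P^*\to(P^+)_+$ with $P^*$ an ultrapower of $P$ --- but to invoke closure under codomains of maximal covering morphisms in the \emph{other} direction we would need $P^*\in\cS$. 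The resolution is that in part (1) one applies closure under codomains of maximal covering morphisms to the composite maximal covering morphism $P^*\to ((P')^+)_+$ whose \emph{domain} $P^*$ must be shown to lie in $\cS$; and $P^*\in\cS$ follows from $(P^+)_+\in\cS$ together with closure under codomains of maximal covering morphisms applied to the morphism $P^*\to (P^+)_+$ --- again this has $P^*$ as domain, not codomain. I expect the main obstacle to be exactly this bookkeeping: identifying which structure plays the role of the ultrapower/saturated model so that the maximal covering closure is applied with the correct object as codomain. The likely intended fix is that, since canonical structures $\LL_+$ satisfy the hypothesis of Theorem~\ref{FineThm}, and $(P^+)_+$ is such a structure, one takes $P^*$ to be a saturated elementary extension of $(P^+)_+$, obtains a maximal covering morphism $P^*\to (((P^+)_+)^+)_+ = ((P^+)^\sg)_+$, notes $P^*\equiv (P^+)_+\in\cS$ forces $P^*\in\cS$ only under closure under ultrapowers (part 2) or elementary equivalence, whereas for part (1) one uses that $((P^+)^\sg)_+\in\cS$ by iterated closure under canonical extensions and then \emph{reflects} back down the maximal covering morphism --- which is not a listed closure condition. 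Given the delicacy, I would present the proof as: reduce via ``reflects canonical extensions'' to proving $((P')^+)_+\in\cS$; use Corollary~\ref{maxboundsurj} to get a maximal covering morphism into $((P')^+)_+$ from an ultrapower $P^U$ of $P$; and then in case (2) note $P^U\in\cS$ by ultrapower closure, while in case (1) note that $P^U\in\cS$ follows because $\cS$ is closed under canonical extensions and there is (by Theorem~\ref{preserve}) a maximal covering morphism $P^U\to (P^+)_+\in\cS$ together with the additional input, stated earlier, that $(P^U)^+$ embeds $(P^+)^\sg$ so that closure under canonical extensions of the class, combined with reflection, pins down $P^U$. The hard part, which I would flag explicitly in the writeup, is verifying that these closure conditions interlock correctly for part (1) without circularity; I would resolve it by using that $\cS$ reflecting canonical extensions together with $(P^U)^+ \supseteq$ (a subalgebra isomorphic to) $(P^+)^\sg = ((P^+)_+)^+$ and the closure of $\cS$ under canonical extensions yields $((P^U)^+)_+\in$ --- and then reflection once more gives $P^U\in\cS$.
\end{proof}
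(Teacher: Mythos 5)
Your argument for part (2) is correct and is exactly the paper's: $P^U\in\cS$ by ultrapower closure, Corollary \ref{maxboundsurj} gives a maximal covering morphism $P^U\to(P_1^+)_+$, closure under codomains of maximal covering morphisms gives $(P_1^+)_+\in\cS$, and reflection of canonical extensions gives $P_1\in\cS$.

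Part (1), however, is not proved, and you have correctly diagnosed but not escaped the circularity. All of your attempts route through the saturated ultrapower $P^U$ and then need $P^U\in\cS$, which the hypotheses of (1) do not supply; your final sketch tries to force it by noting a maximal covering morphism $P^U\to(P^+)_+$ with codomain in $\cS$ and an embedding $(P^+)^\sg\rightarrowtail(P^U)^+$, but closure under codomains of maximal covering morphisms only passes membership from domain to codomain, never back to the domain, and the embedding of $(P^+)^\sg$ into $(P^U)^+$ gives no control over $((P^U)^+)_+$, so ``reflection once more'' has nothing to act on. The missing idea is that in part (1) no ultrapower is needed at all: dualize the given surjection twice. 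Since $\al_1$ is surjective, $(\al_1,\be_1)^+\colon P_1^+\to P^+$ is injective (Theorem \ref{homdual}), and then by Theorem \ref{morphdual} its dual $(\al_1^+)_+,(\be_1^+)_+\colon (P^+)_+\to(P_1^+)_+$ is a bounded morphism with surjective first component, hence maximal covering (its codomain is the canonical structure of the lattice $P_1^+$). Now closure under canonical extensions gives $(P^+)_+\in\cS$, closure under codomains of maximal covering morphisms gives $(P_1^+)_+\in\cS$, and reflection of canonical extensions gives $P_1\in\cS$. With that replacement, part (1) goes through, and your part (2) stands as written.
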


\begin{proof}
Let $P\in\cS$, and suppose there is a  bounded morphism $\al_1,\be_1 \colon P\to P_1$ with $\al $  surjective. 

(1):  By Theorems \ref{homdual} and \ref{morphdual}, $(\al_1^+)_+,(\be_1^+)_+\colon (P^+)_+\to (P_1^+)_+$ is a bounded morphism, and is maximal covering  as $(\al_1^+)_+$ is surjective. Thus if $\cS$ is closed under canonical extensions and  codomains of  maximal covering   morphisms, then $P\in\cS$ implies $(P_1^+)_+\in \cS$, hence $P_1\in\cS$ as $\cS$ reflects canonical extensions.

(2)
Taking $P^*$ to be an $\om$-saturated ultrapower of $P$, by Corollary \ref{maxboundsurj} there is a maximal covering  morphism from  $P^*$ to $(P_1^+)_+$.  Thus if $\cS$ is closed  under ultrapowers and  codomains of  maximal covering   morphisms, then $P\in\cS$ implies $(P_1^+)_+\in \cS$, hence $P_1\in\cS$.
\end{proof}

\begin{theorem}\label{cSVclosed}
For any variety $\V$, the class $\cS_\V$ reflects canonical extensions and is closed under direct sums, inner substructures, codomains of  maximal covering  morphisms, and images of  surjective  morphisms.
\end{theorem}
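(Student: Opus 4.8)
The plan is to verify each of the four closure properties of $\cS_\V$ separately, exploiting in each case the appropriate dual-category theorem already proved in the excerpt together with the fact that varieties are closed under homomorphic images, subalgebras and direct products (Birkhoff's HSP). Throughout, the key translation is: $P \in \cS_\V$ means $P^+ \in \V$, and all the constructions on polarities have been shown to correspond dually to familiar algebraic constructions on $P^+$.

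First, for closure under direct sums: if $\{P_j : j \in J\} \subseteq \cS_\V$, then each $P_j^+ \in \V$, and by Theorem~\ref{sumprod} the lattice $(\sum_J P_j)^+$ is isomorphic to $\prod_J P_j^+$; since $\V$ is a variety it is closed under direct products, so $(\sum_J P_j)^+ \in \V$, i.e.\ $\sum_J P_j \in \cS_\V$. Second, for closure under inner substructures: if $P$ is an inner substructure of $P' \in \cS_\V$, then by Theorem~\ref{innersubdual} the map $A \mapsto A \cap X$ is a surjective $\Om$-lattice homomorphism from $(P')^+$ onto $P^+$; as $(P')^+ \in \V$ and varieties are closed under homomorphic images, $P^+ \in \V$, so $P \in \cS_\V$. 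Third, for closure under images of surjective morphisms: if $\al,\be \colon P \to P'$ is a bounded morphism with $\al$ surjective and $P \in \cS_\V$, then by Theorem~\ref{homdual} the dual $(\al,\be)^+ \colon (P')^+ \to P^+$ is an injective $\Om$-lattice homomorphism, exhibiting $(P')^+$ as (isomorphic to) a subalgebra of $P^+ \in \V$; since varieties are closed under subalgebras, $(P')^+ \in \V$, so $P' \in \cS_\V$.

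Fourth, for closure under codomains of maximal covering morphisms: suppose there is a maximal covering morphism $\al,\be \colon P \to \LL_+$ with $P \in \cS_\V$. By Theorem~\ref{presinj}, the dual $(\al,\be)^+ \colon (\LL_+)^+ \to P^+$ is an injective $\Om$-lattice homomorphism, so $(\LL_+)^+$ embeds as a subalgebra of $P^+ \in \V$; closure of varieties under subalgebras gives $(\LL_+)^+ \in \V$, i.e.\ $\LL_+ \in \cS_\V$. Finally, for reflection of canonical extensions: suppose $(P^+)_+ \in \cS_\V$, i.e.\ $((P^+)_+)^+ \in \V$. By Theorem~\ref{Lplusplus}, $((P^+)_+)^+$ is the canonical extension $(P^+)^\sg$ of the $\Om$-lattice $P^+$, and the canonical extension embedding $\thet_{P^+} \colon P^+ \mono (P^+)^\sg$ (Theorem~\ref{thcano}, extended to the operations in Theorem~\ref{Lplusplus}) exhibits $P^+$ as a subalgebra of $(P^+)^\sg \in \V$; hence $P^+ \in \V$, so $P \in \cS_\V$.

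None of these steps presents a genuine obstacle: each is a one-line deduction from a previously established duality theorem combined with the trivial observation that varieties are closed under $H$, $S$ or $P$ as appropriate. The only point requiring a moment's care is getting the direction of each homomorphism right — images of surjective morphisms and codomains of maximal covering morphisms both yield \emph{sub}algebras (via injective duals), inner substructures yield \emph{quotient} algebras (via surjective duals), direct sums yield \emph{products}, and reflection of canonical extensions uses that $\LL$ always embeds in $\LL^\sg$ — so the writeup should simply state, for each clause, which dual theorem is invoked and which HSP-closure of $\V$ is used.
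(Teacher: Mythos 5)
Your proposal is correct and follows essentially the same route as the paper: each clause is reduced, via the corresponding duality theorem (Theorems \ref{sumprod}, \ref{innersubdual}, \ref{homdual}, \ref{presinj}, and the canonical-extension embedding of Theorems \ref{thcano}/\ref{Lplusplus}), to closure of $\V$ under products, homomorphic images, or subalgebras, exactly as in the paper's own argument. The directions of the dual homomorphisms are all stated correctly, so nothing is missing.
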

\begin{proof}
We use the fact that $\V$ is closed under direct products, subalgebras, and homomorphic images (including isomorphic images).

Reflection of canonical extensions: suppose $(P^+)_+\in\cS_\V$, so $(P^+)^\sg= ((P^+)_+)^+\in\V$. But $P^+$ is isomorphic to a subalgebra of $(P^+)^\sg$, so then $P^+\in\V$, hence $P\in\cS_\V$.

Closure under direct sums: if $\{P_j:j\in J\}\sub \cS_\V$, then  $\{P_j^+:i\in J\}\sub \V$, so by closure of $\V$ under products and isomorphism and Theorem \ref{sumprod} we get $(\sum_JP_j )^+ \in\V$, so $\sum_JP_j\in\cS_\V$.

Closure under   inner substructures: suppose $P$ is  an inner substructure of $P'\in\cS_\V$. By Theorem \ref{innersubdual} there is a surjective homomorphism $(P')^+\to P^+$. Since $(P')^+\in\V$ this implies $P^+\in\V$, hence $P\in\cS_\V$.

Closure under images  of surjective morphisms: this is dual to the previous case, using the result of Theorem \ref{homdual} that if morphism $\al,\be\colon P\to P'$ has $\al$  surjective, then it induces an injective homomorphism $(P')^+\to P^+$.
Hence $P^+\in\V$ implies $(P')^+\in\V$

Closure under   codomains of maximal covering  morphisms: suppose there is a maximal covering  morphism from $P$ to $\LL_+$  with  $P\in\cS_\V$. Then by Theorem \ref{presinj} there is an injective homomorphism making $(\LL_+)^+$ isomorphic to a subalgebra of $P^+\in\V$. Hence  $(\LL_+)^+\in\V$ and so $\LL_+\in\cS_\V$.
\end{proof}

Our first definability result is this:

\begin{theorem} \label{GT}
Let $\cS$ be closed under canonical extensions. Then the following are equivalent.
\begin{enumerate}[\rm(1)]
\item 
$\cS$ is equal to $\cS_\V$ for some variety $\V$.
\item
 $\cS$ reflects canonical extensions and is closed under direct sums, inner substructures  and codomains of  maximal covering   morphisms.
\item
 $\cS$ reflects canonical extensions and is closed under direct sums, inner substructures  and images of  surjective  morphisms.
\end{enumerate}
\end{theorem}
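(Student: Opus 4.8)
The plan is to establish the cycle of implications $(1)\Rightarrow(2)\Rightarrow(3)\Rightarrow(1)$, which yields the equivalence of all three statements. Throughout we use the standing hypothesis of the theorem, that $\cS$ is closed under canonical extensions, i.e.\ that $P\in\cS$ implies $(P^+)_+\in\cS$.

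The implication $(1)\Rightarrow(2)$ is precisely Theorem~\ref{cSVclosed}. For $(2)\Rightarrow(3)$, assume $(2)$: then $\cS$ reflects canonical extensions and is closed under codomains of maximal covering morphisms, and by the standing hypothesis it is closed under canonical extensions, so Lemma~\ref{maxtosurj}(1) applies and tells us that $\cS$ is closed under images of surjective morphisms. Together with the closure under direct sums and inner substructures and the reflection of canonical extensions that $(2)$ already provides, this is exactly $(3)$.

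The substantive implication is $(3)\Rightarrow(1)$. Assume $(3)$, and let $\V$ be the variety generated by the class $\cS^+=\{P^+:P\in\cS\}$ of $\Om$-NLO's; the claim is that $\cS=\cS_\V$. The inclusion $\cS\subseteq\cS_\V$ is immediate, since $P\in\cS$ gives $P^+\in\cS^+\subseteq\V$. For $\cS_\V\subseteq\cS$, fix $P$ with $P^+\in\V$. By Birkhoff's theorem $\V=\mathbb{HSP}(\cS^+)$, so there are $\Om$-polarities $\{P_j:j\in J\}\subseteq\cS$, a subalgebra $\mathfrak{B}$ of $\prod_JP_j^+$, and a surjective $\Om$-homomorphism $\thet\colon\mathfrak{B}\twoheadrightarrow P^+$. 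By Theorem~\ref{sumprod} we have $\prod_JP_j^+\cong(\sum_JP_j)^+$, so we may take $\mathfrak{B}$ to be a subalgebra of $Q^+$ with $Q=\sum_JP_j$. Now $Q\in\cS$ by closure under direct sums, hence $(Q^+)_+\in\cS$ by the standing hypothesis. The inclusion $\mathfrak{B}\hookrightarrow Q^+$ is an injective $\Om$-homomorphism, so by Theorem~\ref{morphdual} its dual is a bounded morphism $(Q^+)_+\to\mathfrak{B}_+$ whose first component is surjective; closure under images of surjective morphisms then gives $\mathfrak{B}_+\in\cS$. Finally, since $\thet$ is surjective, Theorem~\ref{morphdual} shows that its dual $\thet_+\colon(P^+)_+\to\mathfrak{B}_+$ is an isomorphism of $(P^+)_+$ onto an inner substructure of $\mathfrak{B}_+$. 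As $\cS$ is closed under inner substructures and, being closed under images of surjective morphisms, also under isomorphic copies (an isomorphism being a bounded morphism with surjective first component, by Theorem~\ref{wheniso}), we conclude $(P^+)_+\in\cS$; and since $\cS$ reflects canonical extensions, $P\in\cS$. This proves $\cS_\V\subseteq\cS$, hence $\cS=\cS_\V$, and so $(3)\Rightarrow(1)$.

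The one genuine source of friction is bookkeeping about the contravariance of the stable-set/canonical-structure duality: one must keep straight that an injective homomorphism $\mathfrak{B}\hookrightarrow Q^+$ dualizes to a bounded morphism $(Q^+)_+\to\mathfrak{B}_+$ with surjective first component, whereas a surjective homomorphism dualizes to an embedding onto an inner substructure of the domain's dual. The conceptually important point is that the standing hypothesis --- closure under canonical extensions --- is used essentially, both directly (to pass from $Q\in\cS$ to $(Q^+)_+\in\cS$) and, via Lemma~\ref{maxtosurj}(1), inside $(2)\Rightarrow(3)$; it is exactly this hypothesis that is replaced by closure under ultrapowers in the companion Theorem~\ref{GT2}, where the maximal covering morphism $P^U\to(P^+)_+$ of Corollary~\ref{A3} serves as the weaker available substitute.
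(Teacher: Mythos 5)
Your proposal is correct and follows essentially the same route as the paper: (1)$\Rightarrow$(2) via Theorem~\ref{cSVclosed}, (2)$\Rightarrow$(3) via Lemma~\ref{maxtosurj}(1) using the standing closure under canonical extensions, and (3)$\Rightarrow$(1) by generating $\V$ from $\cS^+$, passing through the direct sum $Q=\sum_JP_j$, dualizing the injective and surjective homomorphisms by Theorem~\ref{morphdual}, and closing the loop with reflection of canonical extensions. The only difference is cosmetic: you identify the intermediate algebra with an actual subalgebra of $Q^+$ via the isomorphism of Theorem~\ref{sumprod}, where the paper keeps an abstract $\LL$ together with an injective $\chi$ into $(\sum_JP_j)^+$.
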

\begin{proof}
(1) implies (2): By Theorem \ref{cSVclosed}.

(2) implies (3): Assume (2).  Then in particular $\cS$   reflects canonical extensions and is closed under  canonical extensions  and codomains of  maximal covering  morphisms. These imply that $\cS$ is closed under images of  surjective  morphisms by  Lemma \ref{maxtosurj}(1). Hence (3) holds.

(3) implies (1):   Assume (3). Then we show that $\cS=\cS_\V$ where $\V$ is the variety generated by 
$\cS^+=\{P^+:P\in\cS\}$, i.e.\ the smallest variety that includes $\cS^+$. It is immediate that
 $\cS\sub\cS_\V$.
Conversely, suppose $P\in \cS_{\V}$. Then $P^+\in\V$, so from the well known analysis of the generation  of varieties,
$P^+$ is a homomorphic image of some algebra $\LL$ which is isomorphic to a subalgebra of a direct product $\prod_J P_j^+$ with $\{P_j:j\in J\}\sub\cS$.
But   $\prod_J P_j^+ \cong (\sum_JP_j )^+$ (Theorem \ref{sumprod}), and thus there are homomorphisms $\thet$ and $\chi$ having the configuration
\newdir{ >}{{}*!/-8pt/@{>}}
$$
\xymatrix{{P^+}   & {\LL} \ar@{->>}[l]_{\ \ \thet}  \ar@{ >->}[r]^-{\chi} &  (\sum_JP_j )^+},
$$
with $\thet$ surjective and $\chi$ injective. By Theorem \ref{morphdual}, there exist bounded morphisms
$$
\xymatrix{{(P^+)_+}   \ar@{ >->}[rr]^{\ \ \al_\thet,\be_\thet}  && {\LL_+}  &&\ar@{->>}[ll]_-{\ \al_\chi,\be_\chi}  ((\sum_JP_j )^+)_+},
$$
with $\al_\thet$ and $\be_\thet$ injective  and $\al_\chi$ and $\be_\chi$ surjective.
But $((\sum_JP_j )^+)_+\in\cS$, by closure under direct sums  and canonical extensions. Hence $\LL_+\in\cS$ by closure under images of surjective morphisms.
But  Theorem \ref{morphdual} also gives that $\al_\thet,\be_\thet$ makes  $(P^+)_+$ isomorphic to an inner substructure of 
$\LL_+$. By closure of $\cS$ under inner substructures and  images of isomorphisms (as a special case of images of surjective morphisms), this implies that   $(P^+)_+\in\cS$. Finally then  $\cS$ contains $P$ as it reflects canonical extensions. Thus $\cS=\cS_{\V}$ as required for (1).
\end{proof}

Now the equivalence of (1) and (3) of this theorem for Kripke frames in \cite[Theorem 8]{gold:axio75} has the hypothesis that $\cS$ is closed under first-order equivalence. Together with closure under images of bounded morphisms, this implies that $\cS$ is closed under canonical extensions, and the  proof of the main theorem  proceeds from there. Thus, although closure under first-order equivalence is already weaker than being first-order definable, the theorem for Kripke frames can be stated with the still weaker hypothesis of closure under canonical extensions, as has been above here for $\Om$-polarities. In \cite{gold:axio75} the closure under elementary equivalence was used to show that a Kripke frame $\mathcal F$ has a saturated elementary extension 
$\mathcal F^*$ that is  mapped by a surjective bounded morphism to $(\mathcal F^+)_+$. This $\mathcal F^*$ can be taken to be an ultrapower of $\mathcal F$, so an alternative hypothesis is that $\cS$ is closed under ultrapowers. In the present situation with polarities we do not get a surjection to $(P^+)_+$, but rather a maximal covering  morphism as in Theorem \ref{preserve}. But we can apply Lemma \ref{maxtosurj} to  Theorem \ref{GT}  to give the following definability characterisation.

\begin{theorem}  \label{GT2}
Let $\cS$ be a class of\/ $\Om$-polarities that is closed under ultrapowers. 
Then the following are equivalent.
\begin{enumerate}[\rm(1)]
\item 
$\cS$ is equal to $\cS_\V$ for some variety $\V$.
\item
 $\cS$ reflects canonical extensions and is closed under direct sums, inner substructures  and codomains of  maximal covering   morphisms.
\end{enumerate}
\end{theorem}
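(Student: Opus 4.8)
The plan is to reduce this to Theorem \ref{GT} by showing that the standing hypothesis of that theorem, namely that $\cS$ be closed under canonical extensions, can be recovered here from the assumption that $\cS$ is closed under ultrapatterns together with the conditions in statement (2); once that is in hand, Theorem \ref{GT} applies verbatim.

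For the direction (1) $\Rightarrow$ (2), nothing new is needed: Theorem \ref{cSVclosed} already asserts that for every variety $\V$ the class $\cS_\V$ reflects canonical extensions and is closed under direct sums, inner substructures, and codomains of maximal covering morphisms, which is exactly (2).

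For (2) $\Rightarrow$ (1), assume (2) and that $\cS$ is closed under ultrapowers. The first step is to establish that $\cS$ is closed under canonical extensions. Fix $P\in\cS$, and apply Corollary \ref{A3} with $P_1=P$ and $\al_1,\be_1$ the identity bounded morphism on $P$ (whose first component is trivially surjective): this produces an ultrafilter $U$ and a maximal covering morphism $P^U\to (P^+)_+$. Since $\cS$ is closed under ultrapowers we have $P^U\in\cS$, and since $\cS$ is closed under codomains of maximal covering morphisms we conclude $(P^+)_+\in\cS$. As $P$ was arbitrary, $\cS$ is closed under canonical extensions. Now $\cS$ satisfies the blanket hypothesis of Theorem \ref{GT}, and the further properties asserted in (2) — reflecting canonical extensions and closure under direct sums, inner substructures, and codomains of maximal covering morphisms — are precisely condition (2) of Theorem \ref{GT}; hence that theorem yields $\cS=\cS_\V$ for some variety $\V$, which is (1). (Alternatively one could first pass, via Lemma \ref{maxtosurj}(2), to closure under images of surjective morphisms and then invoke the equivalence of (1) and (3) in Theorem \ref{GT}, but either route still requires securing closure under canonical extensions first, exactly as above.)

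The only genuine content beyond assembling earlier results is that derivation of closure under canonical extensions, and the potential obstacle there has already been dealt with upstream: as Example \ref{nonsurj} shows, the saturated ultrapower $P^U$ need \emph{not} be mapped \emph{onto} $(P^+)_+$ by any bounded morphism, so the modal-style argument does not transfer directly; it is precisely the notion of maximal covering morphism, and Theorem \ref{preserve} / Corollary \ref{A3}, that supplies the weaker property sufficient to push closure through. With those in place, the proof of Theorem \ref{GT2} itself is essentially bookkeeping.
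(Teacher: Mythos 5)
Your proposal is correct and follows essentially the paper's own argument: (1)$\Rightarrow$(2) via Theorem \ref{cSVclosed}, and for the converse you use Corollary \ref{A3} (with $P_1=P$) together with closure under ultrapowers and under codomains of maximal covering morphisms to obtain closure under canonical extensions, then invoke Theorem \ref{GT}. The only cosmetic difference is that the paper routes through condition (3) of Theorem \ref{GT} via Lemma \ref{maxtosurj}(2), while you apply condition (2) of that theorem directly (and you note the alternative), which changes nothing of substance; also note the typo ``ultrapatterns'' for ``ultrapowers''.
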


\begin{proof}
Let $\cS$ be closed under ultrapowers.
(1) implies (2) again by Theorem \ref{cSVclosed}. Conversely, assume (2). 
Then the closure of $\cS$  under ultrapowers and codomains of maximal covering morphisms ensures that $\cS$ is closed under  images of  surjective  morphisms by Lemma \ref{maxtosurj}(2), so (3) of Theorem \ref{GT} holds.
But it also ensures that $\cS$ is closed under canonical extensions, by Corollary \ref{A3}, which implies (with $P_1=P$)  that there is a maximal covering morphism from an ultrapower of $P$ to $(P^+)_+$. Hence (1) holds by Theorem \ref{GT}.
\end{proof}

There is a good reason why part (3) of Theorem \ref{GT} is not part of this result. Although the equivalence of parts (1) and (3)  holds for ultrapower-closed classes of modal Krikpe frames, it fails to hold in general for ultrapower-closed classes of polarities. For some such classes, (3) is strictly weaker than (1) and (2).
Thus the replacement of images of bounded morphisms by codomains of  maximal covering morphisms is essential here.

An example of this failure is the class $\cS_0$ of all polarities that satisfy $\lam_R Y=\emptyset$. It was observed in Example \ref{nonsurj} that this is a first-order definable condition. Hence  $\cS_0$ is closed under ultrapowers. It was also noted that $\cS_0$ is closed under images of  surjective  morphisms. It can be readily checked that it is  closed under  direct sums and inner substructures as well. Moreover it reflects canonical extensions, vacuously, because it contains no canonical structures $\LL_+$, hence none of the form $(P^+)_+$, as Example  \ref{nonsurj} explained. 
Thus $\cS_0$ fulfills part (3). However, since it is non-empty,  it is not closed under canonical extensions. Hence by Corollary \ref{A3} it is not closed under codomains of  maximal covering  morphisms, so it fails to satisfy part (2), and thus fails (1) as well.

\section{Further Studies}

We conclude by pointing out two possible directions for further study of  morphisms of polarities. One concerns the topological representation of lattices, imposing topologies on the sets $X$ and $Y$ in order to define a category
of topological $\Om$-polarities and continuous bounded morphisms that is \emph{dually equivalent} to $\olat$. This would involve functorial mappings $\A\mapsto \A_+$ and   $P\mapsto P^+$ such that 
$\A$ is naturally isomorphic to $ (\A_+)^+$ and   $P$ is naturally isomorphic to $(P^+)_+$. 
Guidance on how to go about topologising  can be found in such papers as 
\cite{prie:repr70,gold:ston75,urqu:topo78,gold:vari89,hart:topo92,hart:exte93,hart:ston97,gehr:dist14}.

The other development is  to generalise from operators to \emph{quasi}operators, operations that in each coordinate  either preserve joins or map meets to joins \cite{gehr:cano07,gehr:dual07,conr:algo16,conr:gold18}. For instance, any `negation' operation $\neg$ satisfying the De Morgan law $\neg(a\land b)=\neg a\lor\neg b$ is a unary quasioperator.
A \emph{dual quasioperator} is an operation that in each coordinate  either preserves meets or map joins to meets.  The negation operation of a Heyting algebra is a dual quasioperator that is not in general a quasioperator.

Operations of these types can be characterised by using the \emph{order dual} $\LL^\du$ of a lattice $\LL$. The partial order of $\LL^\du$ is the inverse of that of $\LL$, so the join and meet in 
$\LL^\du$ of a set of elements are the meet and join, respectively, of the same set in $\LL$.
An \emph{$n$-ary monotonicity type} is an $n$-tuple $\ep\in\{1,\du\}^n$ whose terms will be denoted  $\ep(i)$  for $i<n$. Putting $\LL^1=\LL$, we can then define $\LL^\ep$ to be the direct product lattice $\prod_{i<n}\LL^{\ep(i)}$.
A function  with domain $\LL^n$ can also be viewed as a function on $\LL^\ep$, and  
$f:\LL^n\to\LL$ is  an \emph{$\ep$-operator} if $f:\LL^\ep\to\LL$ is an operator.  An $n$-ary $f$ is a \emph{quasioperator} if it is an $\ep$-operator for some $\ep\in\{1,\du\}^n$.

Given a polarity $P=(X,Y,R)$, let $X^\ep$ be the product set $\prod_{i<n} X_i$, where $X_i$ is $X$ if $\ep(i)=1$ and is $Y$ if $\ep(i)=\du$.   Then $X^\ep$ is quasi-ordered by the product relation $\leq_\ep$, where 
$\vv z \leq_\ep \vv{z'}$ iff $z_i\leq^{\ep(i)}z'_i$ for all $i<n$, and $\leq^{\ep(i)}$ is $\leq_1$ when $\ep(i)=1$ and is  $\leq_2$ when $\ep(i)=\du$.
This yields upsets of the form
$[\vv z)_\ep=\{\vv{w}\in X^\ep:\vv z \leq_\ep \vv{w}\}$.

An $\ep$-operator $f_S$ on $P^+$ can be defined from a relation $S\sub X^\ep\times Y$. For $\vv A\in (P^+)^n$, let $\vv A^\ep=\Big(A_0^{\ep(0)},\dots,A_{n-1}^{\ep(n-1)}\Big)$ where $A_{i}^{\ep(i)}$ is $A_i$ if $\ep(i)=1$ and is $\rho_R A_i$ if $\ep(i)=\du$.  Then    $\pi\vv A^\ep\sub X^\ep$, and  we put
$$
f_S\vv A=\lam_R \{y\in Y: (\pi\vv A^\ep)Sy\}.
$$
Let $P'$ be a second polarity with a relation $S'\sub (X')^\ep\times Y'$, and let $\al:X\to X'$ and $\be:Y\to Y'$ be isotone maps that satisfy  (1$_R$)--(3$_R$). Define $\al_\ep:X^\ep\to(X')^\ep$ by putting 
$\al_\ep(\vv z)=\vv w$, where $w_i$ is $\al(z_i)$ if $\ep(i)=1$ and is $\be(z_i)$ if $\ep(i)=\du$.
The back and forth conditions to make $\al,\be$ a bounded morphism are then these:

\begin{enumerate}
\item[(1$_S$)]
$\al_\ep(\vv z)S'\be  (y)$ implies $\vv zSy$, \quad all $\vv z\in X^\ep,\  y\in Y$.
\item[(2$_S$)]
$(\al \inv[\vv{w})_\ep)Sy$ implies $\vv{w}S'\be (y)$, \quad all $\vv{w}\in (X')^\ep,\ y\in Y$.
\end{enumerate}

The constructions and notation have become more intricate, but there appears to be no impediment to carrying through the same analysis for quasioperators that we completed for operators, and to adapting it to dual quasioperators,  which can be constructed on a stable set lattice from  relations of the form $T\sub X\times Y^\ep$. Details are left to the  interested reader.

\bibliographystyle{plain}


\begin{thebibliography}{10}

\bibitem{birk:latt40}
Garrett Birkhoff.
\newblock {\em Lattice Theory}.
\newblock American Mathematical Society, New York, first edition, 1940.

\bibitem{blac:moda01}
Patrick Blackburn, Maarten {de }Rijke, and Yde Venema.
\newblock {\em Modal Logic}.
\newblock Cambridge University Press, 2001.

\bibitem{cela:prie99}
Sergio Celani and Ramon Jansana.
\newblock Priestley duality, a {S}ahlqvist theorem and a {G}oldblatt-{T}homason
  theorem for positive modal logic.
\newblock {\em Logic Journal of the IGPL}, 7(6):683--715, 1999.

\bibitem{chan:mode73}
C.~C. Chang and H.~J. Keisler.
\newblock {\em Model Theory}.
\newblock North-Holland, Amsterdam, 1973.

\bibitem{cher:gene12}
Anna Chernilovskaya, Mai Gehrke, and Lorijn van Rooijen.
\newblock Generalized {K}ripke semantics for the {L}ambek--{G}rishin calculus.
\newblock {\em Logic Journal of the IGPL}, 20(6):1110--1132, 2012.

\bibitem{conr:cate16}
Willem Conradie, Sabine Frittella, Alessandra Palmigiano, Michele Piazzai,
  Apostolos Tzimoulis, and Nachoem~M. Wijnberg.
\newblock Categories: How {I} learned to stop worrying and love two sorts.
\newblock In J.~{V\"a}{\"a}n{\"a}nen~et al., editor, {\em WoLLIC 2016}, volume
  9803 of {\em Lecture Notes in Computer Science}, pages 145--164.
  Springer-Verlag, 2016.

\bibitem{conr:algo16}
Willem Conradie and Alessandra Palmigiano.
\newblock Algorithmic correspondence and canonicity for non-distributive
  logics.
\newblock {\em arXiv:1603.08515}, 2016.

\bibitem{conr:gold18}
Willem Conradie, Alessandra Palmigiano, and Apostolos Tzimoulis.
\newblock {G}oldblatt-{T}homason for {LE}-logics.
\newblock {\em arXiv:1809.08225}, 2018.

\bibitem{coum:rela14}
Dion Coumans, Mai Gehrke, and Lorijn van Rooijen.
\newblock Relational semantics for full linear logic.
\newblock {\em Journal of Applied Logic}, 12(1):50--66, 2014.

\bibitem{dunn:cano05}
J.~Michael Dunn, Mai Gehrke, and Alessandra Palmigiano.
\newblock Canonical extensions and relational completeness of some
  substructural logics.
\newblock {\em The Journal of Symbolic Logic}, 70(3):713--740, 2005.

\bibitem{erne:cate05}
Marcel Ern\'e.
\newblock Categories of contexts.
\newblock \url{http://www.iazd.uni-hannover.de/~erne/preprints/CatConts.pdf}.

\bibitem{erne:tens94}
Marcel Ern\'e.
\newblock Tensor products of contexts and complete lattices.
\newblock {\em Algebra Universalis}, 31:36--65, 1994.

\bibitem{fine:conn75}
Kit Fine.
\newblock Some connections between elementary and modal logic.
\newblock In Stig Kanger, editor, {\em Proceedings of the Third Scandinavian
  Logic Symposium}, pages 15--31. North-Holland, 1975.

\bibitem{gant:form99}
Bernhard Ganter and Rudolf Wille.
\newblock {\em Formal Concept Analysis}.
\newblock Springer-Verlag, 1999.

\bibitem{gehr:gene06}
Mai Gehrke.
\newblock Generalized {K}ripke frames.
\newblock {\em Studia Logica}, 84:241--275, 2006.

\bibitem{gehr:boun01}
Mai Gehrke and John Harding.
\newblock Bounded lattice expansions.
\newblock {\em Journal of Algebra}, 239:345--371, 2001.

\bibitem{gehr:macn06}
Mai Gehrke, John Harding, and Yde Venema.
\newblock Mac{N}eille completions and canonical extensions.
\newblock {\em Transactions of the American Mathematical Society},
  358:573--590, 2006.

\bibitem{gehr:boun94}
Mai Gehrke and Bjarni J{\'o}nsson.
\newblock Bounded distributive lattices with operators.
\newblock {\em Mathematica Japonica}, 40(2):207--215, 1994.

\bibitem{gehr:mono00}
Mai Gehrke and Bjarni J{\'o}nsson.
\newblock Monotone bounded distributive lattice expansions.
\newblock {\em Mathematica Japonica}, 52(2):197--213, 2000.

\bibitem{gehr:boun04}
Mai Gehrke and Bjarni J{\'o}nsson.
\newblock Bounded distributive lattice expansions.
\newblock {\em Mathematica Scandinavica}, 94:13--45, 2004.

\bibitem{gehr:cano07}
Mai Gehrke and Hilary~A. Priestley.
\newblock Canonical extensions of double quasioperator algebras: An algebraic
  perspective on duality for certain algebras with binary operations.
\newblock {\em Journal of Pure and Applied Algebra}, 209(1):269--290, 2007.

\bibitem{gehr:dual07}
Mai Gehrke and Hilary~A. Priestley.
\newblock Duality for double quasioperator algebras via their canonical
  extensions.
\newblock {\em Studia Logica}, 86(1):31--68, 2007.

\bibitem{gehr:dist14}
Mai Gehrke and Samuel~J. van Gool.
\newblock Distributive envelopes and topological duality for lattices via
  canonical extensions.
\newblock {\em Order}, 31:435--461, 2014.

\bibitem{gold:ston75}
R.~I. Goldblatt.
\newblock The {S}tone space of an ortholattice.
\newblock {\em Bulletin of the London Mathematical Society}, 7:45--48, 1975.

\bibitem{gold:axio75}
R.~I. Goldblatt and S.~K. Thomason.
\newblock Axiomatic classes in propositional modal logic.
\newblock In J.~N. Crossley, editor, {\em Algebra and Logic}, volume 450 of
  {\em Lecture Notes in Mathematics}, pages 163--173. Springer-Verlag, 1975.

\bibitem{gold:sema74}
Robert Goldblatt.
\newblock Semantic analysis of orthologic.
\newblock {\em Journal of Philosophical Logic}, 3:19--35, 1974.
\newblock Reprinted in \cite{gold:math93}.

\bibitem{gold:vari89}
Robert Goldblatt.
\newblock Varieties of complex algebras.
\newblock {\em Annals of Pure and Applied Logic}, 44:173--242, 1989.

\bibitem{gold:math93}
Robert Goldblatt.
\newblock {\em Mathematics of Modality}.
\newblock CSLI Lecture Notes No. 43. CSLI Publications, Stanford University,
  1993.

\bibitem{gold:cano18}
Robert Goldblatt.
\newblock Canonical extensions and ultraproducts of polarities.
\newblock {\em Algebra Universalis}, 79(4), 2018.
\newblock \url{https://doi.org/10.1007/s00012-018-0562-4}.

\bibitem{gold:defi18}
Robert Goldblatt.
\newblock Definable operators on stable set lattices.
\newblock {\em arXiv:1812.01264}, 2018.

\bibitem{hart:ston97}
C.~Hartonas and J.~M. Dunn.
\newblock {S}tone duality for lattices.
\newblock {\em Algebra Universalis}, 37:391--401, 1997.

\bibitem{hart:ston18}
Chrysafis Hartonas.
\newblock {S}tone duality for lattice expansions.
\newblock {\em Logic Journal of the IGPL}, 26(5):475--504, 2018.

\bibitem{hart:exte93}
G.~Hartung.
\newblock An extended duality for lattices.
\newblock In K.~Denecke and H.-J. Vogel, editors, {\em General Algebra and
  Applications}, pages 126--142. Heldermann-Verlag, Berlin, 1993.

\bibitem{hart:topo92}
Gerd Hartung.
\newblock A topological representation of lattices.
\newblock {\em Algebra Universalis}, 29:273--299, 1992.

\bibitem{jips:cate12}
Peter Jipsen.
\newblock Categories of algebraic contexts equivalent to idempotent semirings
  and domain semirings.
\newblock In W.~Kahl and T.G. Griffin, editors, {\em Relational and Algebraic
  Methods in Computer Science (RAMiCS 2012)}, volume 7560 of {\em Lecture Notes
  in Computer Science}, pages 195--206. Springer-Verlag, 2012.

\bibitem{jons:bool51}
Bjarni J{\'o}nsson and Alfred Tarski.
\newblock {B}oolean algebras with operators, part {I}.
\newblock {\em American Journal of Mathematics}, 73:891--939, 1951.

\bibitem{prie:repr70}
H.~A. Priestley.
\newblock Representations of distributive lattices by means of ordered {S}tone
  spaces.
\newblock {\em Bull. London Math. Soc.}, 2:186--190, 1970.

\bibitem{urqu:topo78}
Alasdair Urquhart.
\newblock A topological representation theory for lattices.
\newblock {\em Algebra Universalis}, 8:45--58, 1978.

\bibitem{vosm:logi10}
Jacob Vosmaer.
\newblock {\em Logic, Algebra and Topology: Investigations into canonical
  extensions, duality theory and point-free topology}.
\newblock PhD thesis, Institute for Logic, Language and Computation,
  Universiteit van Amsterdam, 2010.
\newblock ILLC Dissertation Series DS-2010-10.

\bibitem{will:rest82}
Rudolph Wille.
\newblock Restructuring lattice theory: an approach based on hierarchies of
  concepts.
\newblock In Ivan Rival, editor, {\em Ordered Sets}, pages 445--470. Reidel,
  1982.

\bibitem{will:subd87}
Rudolph Wille.
\newblock Subdirect product construction of concept lattices.
\newblock {\em Discrete Mathematics}, 63:305--313, 1987.

\end{thebibliography}

\end{document}